\newcommand{\R}{\mathbb{R}}
\newcommand{\1}{\mathbb{1}}
\newcommand{\N}{\mathbb{N}}
\newcommand{\C}{\mathbb{C}}
\newcommand{\D}{\mathbb{D}}
\newcommand{\M}{\mathcal{M}}
\newcommand{\K}{\mathcal{K}}
\newcommand{\ds}{\displaystyle}
\newcommand{\logp}{\log_+}
\newcounter{tictac}
\def\1{\,\rlap{\mbox{\small\rm 1}}\kern.15em 1}
\def\build#1_#2^#3{\mathrel{\mathop{\kern 0pt#1}\limits_{#2}^{#3}}}
\def\tend#1#2{\build\hbox to 12mm{\rightarrowfill}_{#1\rightarrow #2}^{ }}
\def\converge#1#2#3#4{\build\hbox to
#1mm{\rightarrowfill}_{#2\rightarrow #3}^{\hbox{\scriptsize #4}}}
\newtheorem{thm}{Theorem}[section]
\newtheorem{prop}[thm]{Proposition}
\newtheorem{lemm}[thm]{Lemma}
\newtheorem{defn}[thm]{Definition}
\newtheorem{rem}[thm]{Remark}
\newtheorem{Cor}[thm]{Corollary}
\newcommand{\beq}{\begin{equation}}
\newcommand{\eeq}{\end{equation}}
\begin{document}
\title[On the Fibonacci Complex Dynamical Systems ]{On the Fibonacci Complex Dynamical Systems}
\author{E. H. El Abdalaoui}
\address{ Department of Mathematics, University
of Rouen, LMRS, UMR 60 85, Avenue de l'Universit\'e, BP.12, 76801
Saint Etienne du Rouvray - France}
\email{elhoucein.elabdalaoui@univ-rouen.fr }

\author{S. Bonnot}
\address{Instituto de Matem\'atica e Estat\'istica,
 Universidade de S\~ao Paulo, Rua do Mat\~ao
1010, S\~ao Paulo, SP, Brasil.}
\email{sylvain@ime.usp.br}

\author {A. Messaoudi}
\address{Departamento de Matim\'atica, IBILCE-UNESP, Rua Cristov\~o Colombo, 2265, CEP 15054-0000, S\~ao
Jos\'e de Rio Preto-SP, Brasil}
\email{messaoud@ibilce.unesp.br}

\author{O. Sester}
\address{LAMA UMR 8050 CNRS, Universit\'e
Paris-Est - Marne-la-Vall\'ee, France}
\email{olivier.sester@univ-mlv.fr}
\date{\today}
\maketitle

\parindent=0cm

{\renewcommand\abstractname{Abstract}
\begin{abstract}
We consider in this paper a sequence of complex analytic functions constructed by the following procedure
$f_n(z)=f_{n-1}(z)f_{n-2}(z)+c$, where $c\in\C$ is a parameter. Our aim is to give a thorough dynamical study of this family,
in particular we are able to extend the familiar notions of Julia sets and Green function and to analyze their properties.
As a consequence, we extend some well-known results.  Finally we study in detail the case where $c$ is small. \\

\vspace{8cm}
\hspace{-0.7cm}{\em AMS Subject Classifications} (2000): 37A15, 37A25, 37A30.\\
{\em Key words and phrases:} Julia sets, Complex Dynamics, Holomorphic Motion, Quasi-circle, Endomorphisms of $\C^2$.\\
\end{abstract}
%-------------------------------------------------- Introduction ----------------------------------------------------------
\thispagestyle{empty}
\newpage
\section{Introduction}

In this article we %introduced%
study a family of complex dynamical systems. This family is constructed using
the so-called multiplicative Fibonacci procedure and we call it the complex Fibonacci dynamical systems.
There are several motivations for studying these mappings.\\
On one hand, they are polynomials endomorphisms of  $\C^2$ defined by $H_c(x,y)=(xy+c,x)$ and constitute an extension of the classical
iteration of polynomials in one complex variable. From the dynamical point of view a natural set to study is $K^+(H_c)=\{(x,y)\in\C^2, H_c^n(x,y) \mbox{ is bounded}\}$ where most
of the non-trivial dynamics concentrates, $H_c^n(x,y)$ is the $n$-th iterate of $H_c$.
We will also show that $H_c$ shares a lot of properties with the
complex H\'enon maps defined by $(x, y)\mapsto (y, P(y)+ c - ax)$ for all $(x,
y) \in \mathbb{C}^2$, where $P$ is a complex polynomial and $a, c$ are fixed
complex numbers (see \cite{BS}, \cite{FS}, \cite{MNTU}).

On the other hand, and this was the starting point of our investigations, complex Fibonacci systems naturally appeared in the context  of stochastic adding machines.
Indeed, Killeen and Taylor in \cite{KT} defined a stochastic adding machine in base $2$ by considering that in the addition of $1$,
the carry  is added with probability $p>0$ and is not added with probability $1-p$.
They obtained a Markov chain and they proved that the spectrum  in $l^{\infty}$ of the transition operator
associated to this Markov chain is equal to the filled Julia set of a quadratic map whose coefficients depend on $p$.

Messaoudi and Smania~\cite{MS} gave a connection of adding machine with
Julia sets in a higher dimensional complex setting. In particular, they
proved that the spectrum $\Sigma$ in $l^{\infty}$  of the transition operator associated to a
stochastic adding machine in an exotic base (given by Fibonacci
numbers) is related to the set $K^{+}(H_c)$ where $H_c(x, y)= (xy+c,
x)$ for all $(x, y) \in \mathbb{C}^2$ whith $c$ a fixed real
number.
More precisely (see \cite{alihoucein}), the spectrum $\Sigma$ contains the set $\K_c=\{z \in \mathbb{C},\; (z,z) \in K^{+}(H_c)\}$, and the conjecture is that $\Sigma= \K_c$.\\

%Here,  we focus our investigations on the topological and holomorphic dynamical properties $\K$ and its extensions.
%

In this paper we will lay the basis of a Fatou-Julia theory of those Fibonacci dynamical systems.
To be more specific, let us consider in all what follows
$f:\C\rightarrow\C$ a non constant polynomial function of degree $d$.

\begin{defn}\label{FiboSys} Let us denote $f_0(z)=z$, $f_1(z)=f(z)$ and for any $ n \geq 2$ define $f_n:\C\rightarrow\C$ by the relation
\begin{equation}\label{fn}
f_n(z)=f_{n-1}(z)f_{n-2}(z)+c.
\end{equation}
where $c \in \C$ is a parameter.
\end{defn}
%\begin{rem}
We will call the system given by the above definition, a Complex Fibonacci System (CFS).
%\end{rem}
%
Here we are mainly interested in the description of topological properties of those Julia sets and of the associated connected locus.

Although the initial problem is a one-dimensional, the behavior of the family $(f_n)$ is closely related to the higher dimensional dynamics of $H_c$, since one has $H^{(n)}_c(z,z)=(f_{n+1}(z),f_n(z))$.
Similarly, the filled-in Julia sets of Fibonacci complex dynamical systems can be seen as a slice of the filled-in Julia sets of $H_c$ :
$\K(f,H_c)= \{z \in \mathbb{C},\; (f(z), z) \in K^{+}(H_c)\}$.

%Roughly speaking, the Julia set is closely connected to the Julia sets of the family of polynomials functions $H_c$ given by $(x,y)\in \C^2\longmapsto H_c(x,y)=(xy+c,x)\in\C^2$\cite{guedj}\\
%%%ajouter par Houcein
Ergodic properties of the dynamics of $H_c$  were considered by Guedj in \cite{guedj} and \cite{guedjdynamics} . He mentioned there that $H_c$ has a small topological degree (i.e. $\deg(H_c)=1$) and shares this property
with the H\'enon maps (see \cite[Chap.4]{guedj}). Therefore, from the viewpoint of ergodic investigations, the dynamics is not rich and most interesting properties can be established directly, such as the mixing property, hyperbolicity, topological entropy, the existence of the unique measure of maximal entropy, equidistribution of periodic points.
Nevertheless, the study of holomorphic dynamics of this family of maps seems to need delicate arguments.
%%%%%%%%%%%%%%

In this paper, we prove that the set $\K (f,H_c)= \{z \in \mathbb{C},\; (f(z), z) \in K^{+}(H_c)\}$ is a non empty compact set, such that $\C\setminus \K(f,H_c)$ is a connected set.
Moreover, when $c$ is real with  $\vert c \vert >2$ and $f(0)=f'(0)=0$, then $\K(f,H_c)$ is not connected. In the case, where $f(z)=z$, we also show that $\K (f,H_c)$ is not connected, if $c$ is a real number such that $c < -2$.

Our main Theorem asserts  that $\K(f,H_c)$ is a quasi-disk, when $\vert c \vert$ is small and when $f(z)=z$.
In that particular case we will then prove that there exists a real number $\delta>0$ such that if $\vert c \vert < \delta$ then  $\K(f,H_c)$ is a quasi-disk.

Let us mention here that the classical one dimensional machinery used in the study of the usual Julia sets can not be applied in our setting since the graphs $\{f(z),z), z \in \mathbb{C}$ are not invariant under $H_c$.

All our  results can be  extended  easily to the sets $\K(f,h_c)$ associated to the  maps $h_c(x, y)= (x^a y^b+ c, x)$ where $a, b$ are positive real numbers.

%The paper is organized as follows.

In section 2 we shall prove some topological properties of the filled-in Julia set $\K(f,H_c)$. Precisely, we define an "escape radius" and prove that  $\K(f,H_c)$ is a non empty compact and simply connected set.
In section 3 we define the Green function associated to $\K(f,H_c)$ and prove that it has all the expected properties. Section 4 is devoted to the study some
aspect of the connected locus of the Fibonacci dynamical systems. In particular, we prove that if $f(0)=f'(0)=0$ and $\vert c \vert >2$, then $K(f,H_c)$ is not connected.
In the last three sections, we prove our main result that says that if $\vert c \vert $ is sufficiently small and $f(z)=z$, then $\K (f,H_c)$ is a quasi-disk. For $c=0$, $\K(f,H_0)$ simply reduce to the unit disk,
and we are able to define a holomorphic motion from this disk to $\K(f,H_c)$.

%\section{Complex dynamics of Fibonacci System}
%The previous sections allows us to introduce the following complex dynamical system.

\section{On the topological properties of the filled Julia sets of CFS}

Let $f$ be a non null complex polynomial and   $(f_n)_{n \geq 0}$  be the sequence of functions defined by $f_{0}(z)=z, \; f_{1}(z)=f(z)$ and
$f_n(z)=f_{n-1}(z)f_{n-2}(z)+c$, where $c\in\C$ is a fixed complex number.

Set
$$
\K_c = \{z \in\C:~\sup_{n\in \N}| f_n(z)| <+\infty\}.
$$

Observe that
$$
\K_c= \K(f,H_c)= \{z\in\C,\ \mbox{such\ that}\ (f(z),z)\in K^{+}(H_c)\}.$$

where
 $H_c: \mathbb{C}^2 \to \mathbb{C}^2$ is the function defined by $H_c(x, y)= (xy+c,
x)$ for all $(x, y) \in \mathbb{C}^2$ and
$K^{+}(H_c)= \{ z \in \mathbb{C}^2, H_c^{n}(z) \mbox { is bounded }\}$.

The first step is to find an "escape radius"  for the family $(f_n)_{n \geq 0}$: this is the object
of the following  crucial proposition.
\begin{prop}[Escape Radius]{\label{FilledJulia}}
There exists $R = R(c)>1$,  such that

\[
\K_c=\bigcap_{n=0}^{+\infty}f_n^{-1}\left(\overline{\D(0,R)}\right).
\]

\end{prop}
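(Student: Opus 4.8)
The plan is to find a radius $R>1$ large enough that, once some iterate $|f_n(z)|$ exceeds $R$, the sequence $(|f_k(z)|)_{k\ge n}$ grows without bound, while conversely every point with all $|f_n(z)|\le R$ is trivially in $\K_c$. The second inclusion $\K_c\subseteq\bigcap_n f_n^{-1}(\overline{\D(0,R)})$ is immediate from the definition of $\K_c$ for \emph{any} $R$, so the whole content is the escape statement: if $|f_n(z)|>R$ for some $n$, then $z\notin\K_c$. The inclusion $\bigcap_n f_n^{-1}(\overline{\D(0,R)})\subseteq\K_c$ is then just the contrapositive together with the trivial observation that membership in the intersection means $\sup_n|f_n(z)|\le R<\infty$.

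To prove escape, first I would handle the initial terms: since $f=f_1$ is a polynomial of degree $d$, there is $\rho>1$ with $|f(z)|\ge 2|z|$ and $|f(z)|>3$ whenever $|z|\ge\rho$; I will ultimately take $R\ge\rho$ together with the further constraints below. The key algebraic step is the recursion $f_{k+1}=f_k f_{k-1}+c$. Suppose at some stage we know $|f_{k-1}(z)|\ge R$ and $|f_k(z)|\ge R$ with $R$ chosen so that $R>|c|+2$ (say $R>2$ and $R>2|c|$, which forces $R-|c|/R>1$ comfortably). Then
\[
|f_{k+1}(z)|\ \ge\ |f_k(z)|\,|f_{k-1}(z)|-|c|\ \ge\ R\,|f_k(z)|-|c|\ \ge\ \Big(R-\frac{|c|}{R}\Big)|f_k(z)|\ \ge\ \lambda\,|f_k(z)|
\]
for $\lambda:=R-|c|/R>1$; in particular $|f_{k+1}(z)|\ge R$, so by induction the pair-hypothesis propagates and $|f_{k+j}(z)|\ge\lambda^{j-1}|f_k(z)|\to\infty$. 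Thus once two consecutive iterates are $\ge R$, the point escapes. The remaining issue is to guarantee that a \emph{single} iterate exceeding $R$ forces two consecutive ones to do so. For this, enlarge $R$ so that additionally $|f_k(z)|>R$ implies $|f_k(z)|^2>|c|+R^2$ — for instance $R^2>2|c|$ and $R>R$ is automatic — and then use $|f_{k+1}(z)|\ge|f_k(z)|\,|f_{k-1}(z)|-|c|$. If $|f_{k-1}(z)|\ge 1$ this already gives $|f_{k+1}(z)|\ge|f_k(z)|-|c|$; the delicate case is $|f_{k-1}(z)|$ small, where one instead goes back one more step and uses $f_k=f_{k-1}f_{k-2}+c$, so $|f_{k-1}f_{k-2}|\ge|f_k|-|c|>R-|c|$ is large, forcing at least one of $|f_{k-1}(z)|,|f_{k-2}(z)|$ to be sizeable and then re-running the estimate. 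Alternatively, and more cleanly, one checks the base cases $n=0,1$ directly ($f_0(z)=z$, so $|z|>R\ge\rho$ gives $|f_1(z)|\ge 2|z|>R$, and then $|f_2|=|f_1f_0+c|\ge 2R^2-|c|>R$), and for $n\ge 2$ observes that $|f_n(z)|>R$ with the chain of defining relations forces a consecutive large pair somewhere at or below index $n$.

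The main obstacle I anticipate is precisely this last point: unlike the classical one–variable case, here $|f_n(z)|$ large does not by itself say anything about $|f_{n-1}(z)|$ (the recursion is multiplicative in two previous terms, and one factor could be tiny while the other is huge). So the careful bookkeeping needed is to show that a single large value cannot be an isolated spike — that it must be accompanied, at an adjacent index, by another large value — after which the clean multiplicative blow-up $|f_{k+j}|\ge\lambda^{j-1}|f_k|$ finishes the argument. Concretely I would fix $R:=R(c)=\max\{\rho,\ 2,\ 2|c|,\ \sqrt{2|c|+2}\}$ (adjusting constants as the estimates above dictate), prove by a short induction on $n$ that $|f_n(z)|>R$ implies $|f_{n+1}(z)|>R$ (treating $n=0,1$ by hand and $n\ge 2$ via the defining relation and the factor analysis), and then conclude $|f_m(z)|\to\infty$, hence $z\notin\K_c$, which is the desired contrapositive.
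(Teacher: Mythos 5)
Your overall strategy is the right one and matches the paper's in outline (a consecutive pair of large values forces blow-up, and one must reduce an isolated large value to such a pair; your geometric-growth version of the pair lemma is fine and parallels Lemma~\ref{twice}). But the crux — that a single spike cannot be isolated — is exactly where your sketch does not close. Using only the backward relation $f_k=f_{k-1}f_{k-2}+c$, your dichotomy gives: either $|f_{k-1}(z)|>M$ (a pair, done) or $|f_{k-2}(z)|\geq (|f_k(z)|-|c|)/|f_{k-1}(z)|\geq (R-|c|)/M$. Since $M>2$, the threshold degrades by a factor of roughly $M$ at each step, so ``re-running the estimate'' cannot be iterated from an arbitrary index $k$: after $O(\log_M R)$ descents the lower bound drops below $M$ and the argument dies. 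The missing idea (which is the paper's Proposition~\ref{Keyprop}) is to use the \emph{forward} relation as well: if there is no consecutive pair then $|f_{k+1}(z)|\leq M$ too, and $f_{k+1}=f_kf_{k-1}+c$ then forces $|f_{k-1}(z)|\leq (M+|c|)/R$, which is tiny; only then does the backward relation give $|f_{k-2}(z)|\geq R(R-|c|)/(M+|c|)>R$ once $R>M+2|c|$, so the threshold is preserved and the downward induction genuinely terminates at $n\in\{0,1\}$.

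The base cases also have problems. Your inequality $|f(z)|\geq 2|z|$ for $|z|\geq\rho$ is false for a general non-constant polynomial (already for $f(z)=z$, the case the paper cares most about, and for any degree-one $f$ with small leading coefficient); what is true and suffices is $|f(z)|\geq d\,|z|$ for some $d=d(f)>0$ and $|z|$ large, together with choosing $R\geq M/d$ so that $|z|>R$ yields the pair $|f_0(z)|,|f_1(z)|>M$. More seriously, the case $n=1$ — $|f_1(z)|>R$ with no a priori lower bound on $|z|$ — is never treated, and the descent can terminate there; you need the reverse estimate that a polynomial can only be large if its argument is not too small ($|f(z)|>M^{p}\Rightarrow|z|>M$ for a suitable $p=p(f)$, then take $R>M^{p}$), which is exactly the paper's Lemma~\ref{technique}. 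Finally, a small logical slip: the inclusion $\K_c\subseteq\bigcap_n f_n^{-1}\bigl(\overline{\D(0,R)}\bigr)$ is \emph{not} immediate for arbitrary $R$ (a bounded orbit may well exceed a given $R$); it is precisely the contrapositive of the escape statement, while the trivial inclusion is the reverse one — harmless here since you do aim to prove escape, but the roles are swapped in your write-up.
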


\begin{lemm}\label{twice} Let $M > max \{2, \sqrt{2 \vert c \vert}\}$. If the set $\ds \left\{n \in \mathbb{N} :\ |f_n(z)| >M \right\}$ contains two consecutive integers then the sequence $(f_n(z))_{n \geq 0}$ is unbounded.
\end{lemm}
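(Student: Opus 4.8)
The plan is to show that once two consecutive iterates $|f_{n}(z)|$ and $|f_{n+1}(z)|$ both exceed $M$, the sequence grows at least geometrically and hence escapes to infinity. Write $a_n = |f_n(z)|$. The recursion $f_{n+1} = f_n f_{n-1} + c$ gives, by the triangle inequality,
\[
a_{n+1} \;\geq\; a_n a_{n-1} - |c|.
\]
The key elementary observation is that if $a_{n-1} > M$ and $a_n > M$ with $M > \max\{2,\sqrt{2|c|}\}$, then $a_{n-1} a_n > M^2 > 2|c|$, so $a_{n-1}a_n - |c| > \tfrac{1}{2} a_{n-1} a_n > \tfrac{M}{2}\, a_n \geq a_n > M$. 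Thus $a_{n+1} > M$ as well, and moreover $a_{n+1} > a_n$. By induction, once the set $\{k : a_k > M\}$ contains two consecutive integers $n-1, n$, it contains every integer $\geq n-1$, and the sequence $(a_k)_{k \geq n-1}$ is strictly increasing with $a_{k+1} > \tfrac{M}{2} a_k$ for all $k \geq n$. Since $M/2 > 1$, this forces $a_k \to +\infty$, so $(f_n(z))_{n\geq 0}$ is unbounded.

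The steps, in order, would be: (1) set $a_n = |f_n(z)|$ and record $a_{n+1} \geq a_n a_{n-1} - |c|$; (2) prove the one-step implication "$a_{n-1}, a_n > M \Rightarrow a_{n+1} > M$ and $a_{n+1} > a_n$" using $M > 2$ and $M^2 > 2|c|$ to absorb the $-|c|$ term into half of the product; (3) conclude by induction that $a_k > M$ for all $k$ beyond the first consecutive pair, together with the quantitative growth $a_{k+1} \geq (M/2) a_k$; (4) observe $M/2 > 1$ yields $a_k \to \infty$.

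I do not expect a genuine obstacle here; the only mild subtlety is bookkeeping the base case of the induction correctly, namely that the hypothesis gives two \emph{consecutive} indices $n-1$ and $n$ (not merely two indices), which is exactly what is needed to start the recursion step (2). One should also be slightly careful that the constant $\sqrt{2|c|}$ is tailored precisely so that $M^2 > 2|c|$, which is what lets us write $a_{n-1}a_n - |c| \geq \tfrac12 a_{n-1} a_n$; any threshold guaranteeing $M^2 \geq 2|c|$ together with $M \geq 2$ would serve equally well.
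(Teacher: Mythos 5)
Your proof is correct and follows essentially the same route as the paper: both use the triangle inequality $|f_{n+1}(z)|\geq |f_n(z)||f_{n-1}(z)|-|c|$ together with $M^2>2|c|$ to absorb the constant and propagate the bound from two consecutive large iterates. The only difference is cosmetic: the paper tracks the growth explicitly as $|f_{k+n}(z)|>2\left(\tfrac{M}{2}\right)^{F_n}$ with Fibonacci exponents, whereas you settle for the weaker geometric bound $a_{k+1}>\tfrac{M}{2}a_k$, which is already enough to conclude unboundedness.
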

\begin{proof}Let $k,~ k+1$ be  two consecutive integers in $\ds \left\{n \in \mathbb{N}~:~ |f_n(z)| >M \right\}$. Then, by the triangle inequality, we have
\begin{eqnarray*}
|f_{k+2}(z)| \geq  M^2-|c| > \frac{M^2}2.
\end{eqnarray*}
Hence
by the same reasoning we get
\begin{eqnarray*}
|f_{k+3}(z)| &\geq&  \frac{M^3}2-|c| \\
             & >  &  \left(\frac{M^2}2+|c|\right)\frac{M}2-|c|>2\left(\frac{M}2\right)^3.
\end{eqnarray*}
Applying the same procedure one may easily deduce by induction that
\[
 \forall n \in \mathbb{N},\; |f_{k+n}(z)| > 2\cdot {\left(\frac{M}2\right)}^{{F}_n}
\]
where ${F}_n$ is the $n$-th Fibonacci number with initial values $F_0= F_1=1$. The proof of the Lemma is thus completed.
\end{proof}

\begin{lemm}\label{technique} Let $P$ be a non constant  polynomial function. There exist positive real numbers $\delta= \delta(P),\; p= p(P) >1$, ~$  d= d(P)>0$ such that
for all $E \geq \delta$ and for all $z \in \mathbb{C}$ we have
\begin{eqnarray*}
\vert z \vert > E  \Rightarrow \vert P(z)\vert >d E \mbox { and } \vert  P(z)  \vert > E^p \Rightarrow \vert z \vert >E.
\end{eqnarray*}
\end{lemm}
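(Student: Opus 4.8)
The plan is to read off both inequalities from the comparison of $|P(z)|$ with the modulus of the leading term of $P$. Write $k=\deg P\ge 1$ and $P(z)=\sum_{j=0}^{k}a_jz^j$ with $a_k\neq 0$, and set $A:=\sum_{j=0}^{k}|a_j|$ and $B:=\sum_{j=0}^{k-1}|a_j|$. These quantities depend on $P$ only, as required, and the whole proof is an elementary estimate organised so that a single threshold $\delta$ works for both implications.

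For the implication $|z|>E\Rightarrow|P(z)|>dE$, I would use the classical lower bound: for $|z|\ge 1$ one has $|P(z)|\ge |a_k|\,|z|^{k}-B\,|z|^{k-1}=|z|^{k-1}\big(|a_k|\,|z|-B\big)$, hence $|P(z)|\ge \tfrac{|a_k|}{2}\,|z|^{k}$ whenever $|z|\ge \delta_1:=\max\{1,\,2B/|a_k|\}$. Consequently, for every $E\ge\delta_1$ and every $z$ with $|z|>E$ we get $|P(z)|\ge \tfrac{|a_k|}{2}|z|^{k}>\tfrac{|a_k|}{2}E^{k}\ge \tfrac{|a_k|}{2}E$, the last step using $E\ge 1$ and $k\ge 1$; so $d:=|a_k|/2>0$ does the job.

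For the implication $|P(z)|>E^{p}\Rightarrow|z|>E$, I would take $p:=k+1$, which is a real number $>1$ precisely because $P$ is non constant, and argue by contraposition using the crude upper bound: if $|z|\le E$ and $E\ge 1$, then $|z|^{j}\le E^{k}$ for all $j\le k$, so $|P(z)|\le A\,E^{k}\le E\cdot E^{k}=E^{k+1}=E^{p}$ as soon as $E\ge A$. Hence for $E\ge\delta_2:=\max\{1,A\}$ the inequality $|P(z)|>E^{p}$ forces $|z|>E$, and taking $\delta:=\max\{\delta_1,\delta_2\}$ gives a single threshold above which both conclusions hold for all $z\in\mathbb{C}$. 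There is no genuine obstacle here; the only point requiring a little care is the bookkeeping of the constants, so that $d$, $p$, $\delta$ depend on $P$ alone and so that one value of $\delta$ validates both implications uniformly for every $E\ge\delta$.
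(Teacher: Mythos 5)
Your proof is correct and follows essentially the same route as the paper: the first implication via dominance of the leading term (the paper phrases it as the asymptotic behaviour of $\vert P(z)\vert/\vert z\vert$, which you make quantitative with the bound $\vert P(z)\vert\geq \tfrac{\vert a_k\vert}{2}\vert z\vert^{k}$), and the second by contraposition from the crude estimate $\vert P(z)\vert\leq A\,E^{k}$ for $\vert z\vert\leq E$, $E\geq 1$, exactly as in the paper. You simply supply the explicit constants ($d=\vert a_k\vert/2$, $p=k+1$, $\delta=\max\{\delta_1,\delta_2\}$) that the paper leaves implicit.
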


\begin{proof}
The first inequality comes from the fact that the function $\frac{\vert P(z) \vert}{\vert z \vert}$ converges either to a positive constant or to infinity as $\vert z \vert $ goes to infinity.
The second inequality is due to the fact that
if $E >1$ is a real number and   $\vert z \vert \leq E$ then $ \vert  P(z)  \vert \leq c_1 E^k $ where $k$ is the degree of the polynomial $P$ and  $c_1$ is the sum of the modulus of the coefficients of $P$.
\end {proof}

\begin{prop}{\label{Keyprop}} Let $M$ be as in Lemma~\ref{twice} then
there exists $R>M$ such that if $|f_{k}(z)| > R$, for some integer $k$, then the sequence $(f_n(z))_{n \geq 0}$ is not bounded.
 \end{prop}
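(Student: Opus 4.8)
The plan is to reduce the statement to Lemma~\ref{twice}: it suffices to produce a real number $R>M$ with the property that whenever $|f_k(z)|>R$ for some $k$, the set $\{n:|f_n(z)|>M\}$ contains two consecutive integers; then Lemma~\ref{twice} gives unboundedness of $(f_n(z))_n$. I will choose
$$R>\max\Bigl\{\,2(M+|c|),\ \delta(f),\ \tfrac{M}{d(f)},\ c_1\,\Bigr\},$$
where $\delta(f),d(f)$ are the constants furnished by Lemma~\ref{technique} applied to $P=f$, and $c_1$ is the sum of the moduli of the coefficients of $f$ (so that $|z|\le 1$ implies $|f(z)|\le c_1$).

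The heart of the argument is a descending induction on the index. Suppose $|f_m(z)|>R$ for some $m\ge 2$. Inspect $f_{m+1}(z)=f_m(z)f_{m-1}(z)+c$. If $|f_{m+1}(z)|>M$, then since also $|f_m(z)|>R>M$, the consecutive integers $m,m+1$ both lie in $\{n:|f_n(z)|>M\}$ and we are finished. Otherwise $|f_{m+1}(z)|\le M$, and the triangle inequality forces $|f_m(z)|\,|f_{m-1}(z)|\le M+|c|$, whence $|f_{m-1}(z)|<(M+|c|)/R<1$ because $R>M+|c|$. Substituting this small value into $f_m(z)=f_{m-1}(z)f_{m-2}(z)+c$ yields $|f_{m-2}(z)|\ge(|f_m(z)|-|c|)/|f_{m-1}(z)|>(R-|c|)\,R/(M+|c|)$, and the right-hand side exceeds $R$ as soon as $R>M+2|c|$. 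Thus $|f_{m-2}(z)|>R$, and we repeat with $m$ replaced by $m-2$. Note that at the next step the relevant ``odd'' term $|f_{m-1}(z)|$ is already $<1\le M$, so the same computation applies verbatim; the index drops by $2$ each time, so after finitely many steps we have either exhibited two consecutive large terms or reached an index $m\in\{0,1\}$ with $|f_m(z)|>R$.

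It remains to treat these two base cases. If $|f_0(z)|=|z|>R$, then $R\ge\delta(f)$ together with Lemma~\ref{technique} gives $|f_1(z)|=|f(z)|>d(f)\,R>M$, so $\{0,1\}\subseteq\{n:|f_n(z)|>M\}$. If $|f_1(z)|=|f(z)|>R$: should $|f_2(z)|>M$ we are done with $\{1,2\}$; otherwise $f_2(z)=f_1(z)f_0(z)+c$ forces $|f_0(z)|=|z|<(M+|c|)/R<1$, hence $|f(z)|\le c_1$, contradicting $|f(z)|>R>c_1$. In every case two consecutive integers of $\{n:|f_n(z)|>M\}$ have been produced, and Lemma~\ref{twice} concludes.

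The one point that needs care is the bookkeeping in the inductive step: the threshold $R$ must be genuinely \emph{preserved} as the index descends, not merely decreased, or the induction would collapse before reaching the base cases. This is exactly what the estimate $(R-|c|)R/(M+|c|)>R$ secures, and it forces $R$ to be taken as a fixed multiple of $M+|c|$ rather than just slightly above $M$; everything else is a routine chase through the triangle inequality.
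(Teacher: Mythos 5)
Your proof is correct and takes essentially the same route as the paper: when $|f_{m+1}(z)|\le M$ you derive $|f_{m-2}(z)|>(R-|c|)R/(M+|c|)>R$ and descend until two consecutive terms exceed $M$ and Lemma~\ref{twice} applies, which is exactly the paper's inductive step with the same key estimate, and your base case $k=0$ matches its use of Lemma~\ref{technique}. The only (harmless) deviation is the base case $k=1$, where the paper uses the second inequality of Lemma~\ref{technique} (via $R>M^p$) to conclude $|z|>M$, while you instead exclude $|f_2(z)|\le M$ by the elementary bound $|f(z)|\le c_1$ for $|z|\le 1$.
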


\begin{proof} Consider $\delta, p, $ and $d$ given by Lemma~\ref{technique} applied to $P=f_1$.
Let $M> \max\{ 2, \sqrt{2 \vert c \vert},  \delta\}$ and $R > \max \{M+ 2 \vert c \vert, \frac{M}{d}, M^p\}$.
If $| f_k(z) | >R$ for $k=0$ or $1$, then by  Lemmas~\ref{twice} and~\ref{technique}, we have $(f_n(z))_{n \geq 0}$ is unbounded.

Now, assume that the property is true for $1 \leq n <k$ and that $|f_{k}(z)| >R$.
It follows from Lemma~\ref{twice} that
$$
\max \{|f_{k-1}(z)|,|f_{k+1}(z)|\} \leq M .
$$
\noindent Applying the triangle inequality we obtain
\begin{eqnarray}\label{Fiboeq1}
|f_{k-1}(z)| < \frac{|f_{k+1}(z)|+|c|}{R} \leq \frac{M+|c|}{R}.
\end{eqnarray}
\noindent On the other hand, we have
\[
R <|f_k(z)| <  \left(\frac{M+|c|}{R}\right)|f_{k-2}(z)|+|c|.
\]
\noindent It follows that
$
|f_{k-2}(z)|>  R. \frac{R-|c|}{M+|c|}> R.$
Hence $(f_n(z))_{n \geq 0}$ is unbounded.

\end{proof}

The proof of Proposition~\ref{FilledJulia} is now an immediate consequence of Proposition~\ref{Keyprop}.\\
\begin{Cor}
$\K_c$ is a compact subset of $\C$.
\end{Cor}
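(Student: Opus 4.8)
The plan is to deduce this directly from Proposition~\ref{FilledJulia}, which expresses $\K_c$ as the nested intersection $\bigcap_{n=0}^{+\infty} f_n^{-1}\!\left(\overline{\D(0,R)}\right)$. There are only two things to check: that $\K_c$ is closed and that it is bounded; compactness then follows from the Heine--Borel theorem in $\C$.

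For closedness, I would observe that each $f_n$ is a polynomial, hence continuous on $\C$, and $\overline{\D(0,R)}$ is closed; therefore each $f_n^{-1}\!\left(\overline{\D(0,R)}\right)$ is closed. An arbitrary intersection of closed sets is closed, so $\K_c$ is closed.

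For boundedness, I would use the single term $n=0$ in the intersection: since $f_0(z)=z$, we have $\K_c \subseteq f_0^{-1}\!\left(\overline{\D(0,R)}\right) = \overline{\D(0,R)}$, which is bounded (with radius $R=R(c)$ furnished by Proposition~\ref{FilledJulia}). Combining the two observations, $\K_c$ is a closed and bounded subset of $\C$, hence compact.

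There is no real obstacle here: the entire content of the statement has already been packaged into Proposition~\ref{FilledJulia}, and the corollary is just the remark that a countable intersection of preimages of a closed disk under continuous maps, one of which is the identity, is compact. The only thing worth being careful about is citing the escape radius $R$ correctly so that the containment in $\overline{\D(0,R)}$ is legitimate.
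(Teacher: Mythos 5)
Your proof is correct and follows the same route as the paper, which simply cites Proposition~\ref{FilledJulia}; you have just spelled out the standard details (closedness from preimages of the closed disk under the continuous polynomials $f_n$, boundedness from the $n=0$ term since $f_0(z)=z$, then Heine--Borel).
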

\begin{proof} This corollary is straightforward from Proposition~\ref{FilledJulia}.

\end{proof}

\begin{prop}
The set $\mathbb{C}\setminus \K_c$ is a connected set.
\end{prop}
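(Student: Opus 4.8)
The plan is to prove the slightly stronger statement that $\C\setminus\K_c$ has \emph{no bounded connected component}. Since $\K_c\subseteq\overline{\D(0,R)}$ by Proposition~\ref{FilledJulia} (take $n=0$, where $f_0=\mathrm{id}$), the set $\K_c$ is bounded, so $\C\setminus\K_c$ has exactly one unbounded component; once bounded components are excluded, connectedness follows. (If $\K_c=\emptyset$ the statement is trivial.)

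First I would restate Proposition~\ref{FilledJulia} in the convenient form: $z\in\K_c$ if and only if $|f_n(z)|\le R$ for every $n\in\N$; equivalently, $\K_c\subseteq\{z\in\C:|f_n(z)|\le R\}$ for each fixed $n$. I would also note that $\K_c$ is closed (an intersection of closed sets), so $\C\setminus\K_c$ is open and its connected components are open subsets of $\C$.

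Next, suppose toward a contradiction that $W$ is a bounded connected component of $\C\setminus\K_c$, so $W\neq\emptyset$. Being a connected component of an open set, $W$ is open and relatively closed in $\C\setminus\K_c$, from which $\partial W\subseteq\K_c$. Hence $|f_n(z)|\le R$ for every $z\in\partial W$ and every $n$. Now comes the key step: each $f_n$ is a polynomial, hence holomorphic on the bounded domain $W$ and continuous on the compact set $\overline{W}$, so the maximum modulus principle gives
\[
\max_{z\in\overline{W}}|f_n(z)| \;=\; \max_{z\in\partial W}|f_n(z)| \;\le\; R .
\]
Therefore $\sup_{n\in\N}|f_n(z)|\le R<\infty$ for every $z\in W$, i.e. $W\subseteq\K_c$, contradicting $W\subseteq\C\setminus\K_c$ with $W\neq\emptyset$. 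Thus no bounded component exists, and $\C\setminus\K_c$ coincides with its single unbounded component, hence is connected.

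The only point requiring any care — and it is genuinely routine — is the inclusion $\partial W\subseteq\K_c$: it uses that $\C\setminus\K_c$ is open and that distinct connected components of an open planar set are disjoint open sets, so a boundary point of $W$ lying in $\C\setminus\K_c$ would already lie in $W$. The real substance of the argument is that the maximum modulus estimate can be applied uniformly in $n$, which works precisely because the escape radius $R$ produced by Proposition~\ref{FilledJulia} does not depend on $n$.
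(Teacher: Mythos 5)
Your proof is correct, but it takes a different route from the paper's. The paper writes $\C\setminus \K_c=\bigcup_{n\ge 0}\bigl(\C\setminus f_n^{-1}(\overline{\D(0,R)})\bigr)$ and applies the maximum principle to each $f_n$ separately, showing that every sublevel set $\{z:\ |f_n(z)|\le R\}$ is full, so that each set in the union is connected and contains a neighborhood of infinity; connectedness of the union then follows because all these sets share a common point near infinity. You instead work directly with the intersection $\K_c=\bigcap_n f_n^{-1}(\overline{\D(0,R)})$: assuming a bounded component $W$ of the complement, you use $\partial W\subseteq\K_c$ and the maximum modulus principle for all the $f_n$ simultaneously --- legitimate precisely because, as you stress, the escape radius $R$ from Proposition~\ref{FilledJulia} does not depend on $n$ --- to force $W\subseteq\K_c$, a contradiction. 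Both arguments rest on the same two ingredients (uniform escape radius plus maximum principle), but they package them differently: the paper's version yields the extra local information that each individual sublevel set $f_n^{-1}(\overline{\D(0,R)})$ is full, while yours directly establishes the stronger global statement that $\C\setminus\K_c$ has no bounded component, i.e.\ that $\K_c$ is full --- a fact the paper only records later, in Section~3, where it is proved by an argument structurally identical to yours with the Green function $g_c$ playing the role of your uniform bound on the $|f_n|$. Your handling of the routine points ($\partial W\subseteq\K_c$, uniqueness of the unbounded component of the complement of a compact set) is accurate, so there is no gap.
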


\begin{proof}
By Proposition \ref{FilledJulia}, there exists a real number $R >0$ such that
$\mathbb{C} \setminus \K_c= \bigcup_{n=0}^{+\infty} \mathbb{C} \setminus
f_{n}^{-1} (\overline{D(0, R)}).$ Now, for all $n\geq 0,
\; \mathbb{C} \setminus f_{n}^{-1} (\overline{D(0, R)})$ contains a neighborhood of infinity and is a connected set: indeed its complement $\{z; \vert f_n(z) \vert \leq R \}$ is necessarily simply connected (by the maximum principle applied to $f_n$).

\end{proof}

\begin{prop}
\label{encaixante}
There exists a real number $R >0$ such that
 $$\K_c=
\bigcap_{n=1}^{+\infty} f_{n}^{-1} (D(0, R)) \mbox {  and
} f_{n+1}^{-1} (D(0, R)) \subset f_{n}^{-1} (D(0,
 R)), \; \forall n \geq 1.$$
\end{prop}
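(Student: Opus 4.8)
The plan is to take for $R$ the escape radius $R(c)$ of Proposition~\ref{FilledJulia}, enlarged so as to dominate a finite list of quantities depending only on $c$ and on $f$: besides $R>R(c)$, I will ask that $R$ be large compared with $|c|$ (say $R>16(1+|c|)$), that $R>2c_1\,2^{d}$ where $d=\deg f$ and $c_1$ is the sum of the moduli of the coefficients of $f$, and that $R>2\delta(f)$ and $R>4/d(f)$, where $\delta(f),d(f)>0$ are the constants produced by Lemma~\ref{technique} applied to $P=f$. Enlarging the radius is harmless: by Proposition~\ref{FilledJulia}, $z\in\K_c$ if and only if $\sup_n|f_n(z)|\le R(c)$, so in particular $z\in\K_c$ implies $|f_n(z)|<R$ for every $n$.

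With this $R$, the identity $\K_c=\bigcap_{n\ge1}f_n^{-1}(D(0,R))$ is immediate: the inclusion $\subseteq$ is the remark just made, and conversely, if $|f_n(z)|<R$ for all $n\ge1$ then $\sup_{n\ge0}|f_n(z)|\le\max(|z|,R)<\infty$, so $z\in\K_c$ (the single value $f_0(z)=z$ being irrelevant to boundedness).

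The substance of the statement is the nesting $f_{n+1}^{-1}(D(0,R))\subset f_n^{-1}(D(0,R))$ for $n\ge1$, which I would phrase as the claim: for no $m\ge1$ and no $z\in\C$ can one have $|f_m(z)|\ge R$ and $|f_{m+1}(z)|<R$. Suppose such $m,z$ existed. From $f_{m+1}=f_mf_{m-1}+c$ and $|f_m(z)|\ge R$ one gets $|f_{m-1}(z)|<(R+|c|)/R=:\alpha$. Now the recursion, read backwards, propagates this in an alternating fashion: $f_m=f_{m-1}f_{m-2}+c$ with $|f_{m-1}(z)|<\alpha$ forces $|f_{m-2}(z)|>(R-|c|)/\alpha=:B_1$ (large); then $f_{m-1}=f_{m-2}f_{m-3}+c$ forces $|f_{m-3}(z)|<(\alpha+|c|)/B_1=:A_1$ (small); and so on. Setting $B_0:=R$, $A_0:=\alpha$, $B_{j+1}:=(B_j-|c|)/A_j$ and $A_{j+1}:=(A_j+|c|)/B_{j+1}$, a short induction on $j$ (using only that $R$ is large in the above sense) shows that $B_j\ge R/2$ and $A_j\le 2$ for all $j$, and that
\[
|f_{m-2j}(z)|>B_j\qquad\text{and}\qquad|f_{m-2j-1}(z)|<A_j
\]
for every $j$ for which the indices involved are $\ge0$.

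To finish, I would run this chain down to the indices $0$ and $1$ and split on the parity of $m$. If $m=2\mu$, the chain gives $|z|=|f_0(z)|>B_\mu\ge R/2$ together with $|f(z)|=|f_1(z)|<A_{\mu-1}\le2$; but Lemma~\ref{technique} applied to $P=f$ (with $E=B_\mu\ge\delta(f)$) yields $|f(z)|>d(f)\,B_\mu\ge d(f)R/2>2$, a contradiction. If $m=2\mu+1$ (the value $m=1$ being the degenerate case where no backward step is needed), the chain gives $|f(z)|=|f_1(z)|>B_\mu\ge R/2$ together with $|z|=|f_0(z)|<A_\mu\le2$; but $|z|\le2$ forces $|f(z)|\le c_1\,2^{d}<R/2$, again a contradiction. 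This proves the claim, hence the nesting. I expect the only delicate point to be the bookkeeping of this alternating backward chain — verifying that $B_j$ stays large while $A_j$ stays bounded, and that reaching the bottom rungs $f_0(z)=z$ and $f_1(z)=f(z)$ genuinely clashes with the elementary growth and smallness estimates for a polynomial (Lemma~\ref{technique}). Conceptually it is just the statement that a large value of $f_m(z)$, propagated backwards through the recursion, would force an impossibly large — or impossibly small — value of $z$.
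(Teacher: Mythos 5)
Your proposal is correct, but it is organized genuinely differently from the paper's proof, so a comparison is in order. Both arguments hinge on the same backward propagation: if $|f_{m+1}(z)|<R\leq |f_m(z)|$, then $|f_{m-1}(z)|<(R+|c|)/R$ and consequently $|f_{m-2}(z)|$ is forced to be of order $R$. The paper closes the loop by induction on the nesting statement itself: Lemma~\ref{rrrt} (Lemma~\ref{technique} applied to $f_1,\dots,f_4$) supplies the first few inclusions, and in the inductive step the hypothesis turns $|f_{n-1}(z)|<R$ into $|f_k(z)|<R$ for all $k\leq n-2$, so that descending only four levels ($|f_{n-4}(z)|=O(R^2)>R$) already contradicts the induction hypothesis; the backward descent thus has bounded depth. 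You instead use no induction on $n$: you run the alternating chain of bounds $B_j$, $A_j$ all the way down to the bottom rungs $f_0(z)=z$ and $f_1(z)=f(z)$, where the parity-dependent clash with the elementary polynomial estimates (Lemma~\ref{technique} for growth at large $|z|$, the coefficient bound $|f(z)|\leq c_1 2^{d}$ for $|z|\leq 2$) yields the contradiction, the case $m=1$ playing the role of the paper's base cases. Your route is self-contained and makes explicit exactly which largeness conditions on $R$ are used; the paper's route avoids controlling an infinite backward chain at the price of the auxiliary Lemma~\ref{rrrt} and an induction.

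One point to tighten: the invariant you announce, $B_j\geq R/2$ and $A_j\leq 2$, does not propagate as stated, since from $B_j\geq R/2$ and $A_j\leq 2$ one only gets $B_{j+1}\geq (R/2-|c|)/2\approx R/4$. The fix is immediate: under your hypothesis $R>16(1+|c|)$ one checks that $A_1\leq 1/4$, and the strengthened invariant $B_j\geq R/2$, $A_j\leq 1/4$ (for $j\geq 1$) does propagate, because then $B_{j+1}\geq 4\bigl(R/2-|c|\bigr)\geq 7R/4$ and $A_{j+1}\leq \bigl(1/4+|c|\bigr)/B_{j+1}\leq 1/4$. With that adjustment (and $A_0\leq 2$ kept for the even-$m$ endgame) your bookkeeping closes and the proof is complete.
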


\begin{lemm}
\label{rrrt}
There exists a real number $R_0 >0$ such that for $R >R_0$,
and for all $k \in \{1,2,3,4\}$, we have
\begin{eqnarray}
\label{dd}
\vert f_{k+1}(z) \vert <R \Longrightarrow \vert f_{k}(z) \vert <R .
\end{eqnarray}
\end{lemm}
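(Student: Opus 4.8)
\emph{Proof plan.} The plan is to verify the four implications ($k=1,2,3,4$) separately and then take $R_0$ to be the largest of the finitely many thresholds that appear; since only $f_0,\dots,f_4$ are involved, this is harmless. For each $k$ I will in fact prove the (slightly stronger) contrapositive: if $R$ is large, then $|f_k(z)|\ge R$ forces $|f_{k+1}(z)|>R$.

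First I would record the only structural fact needed: for $k\in\{1,2,3,4\}$ one has $f_{k+1}=f_k f_{k-1}+c$, and each of $f_0,f_1,f_2,f_3,f_4$ is a non-constant polynomial (degree $\ge 1$, indeed $\ge 2$ for the indices $\ge 2$), so that Lemma~\ref{technique} applies to each of them; this yields constants $\delta_j$, $p_j>1$, $d_j>0$ for $j\in\{0,1,2,3,4\}$.

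Now fix $k$ and suppose $|f_k(z)|\ge R$. Put $E=(R/2)^{1/p_k}$, so that $|f_k(z)|\ge R>R/2=E^{p_k}$; provided $R$ is large enough that $E\ge\delta_k$, the second implication of Lemma~\ref{technique} applied to $f_k$ gives $|z|>E$. Provided also $E\ge\delta_{k-1}$, the first implication of Lemma~\ref{technique} applied to $f_{k-1}$ gives $|f_{k-1}(z)|>d_{k-1}E$. Feeding these into the recursion and using the triangle inequality,
\[
|f_{k+1}(z)|\ \ge\ |f_k(z)|\,|f_{k-1}(z)|-|c|\ >\ R\cdot d_{k-1}\,(R/2)^{1/p_k}-|c|\ =\ d_{k-1}\,2^{-1/p_k}\,R^{\,1+1/p_k}-|c|.
\]
Because the exponent $1+1/p_k$ is strictly larger than $1$, the right-hand side exceeds $R$ as soon as $R$ is sufficiently large, which is the desired conclusion for this value of $k$.

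The only real work is bookkeeping: one must collect, over $k\in\{1,2,3,4\}$, the three kinds of lower bounds on $R$ that appear — namely $E\ge\delta_k$, $E\ge\delta_{k-1}$, and $d_{k-1}2^{-1/p_k}R^{1+1/p_k}-|c|>R$ — and let $R_0$ be their maximum, so that a single $R_0$ makes all four implications valid simultaneously. (For $k=1$ note that $f_{k-1}=f_0(z)=z$, so the first implication of Lemma~\ref{technique} used there is merely $|z|>E\Rightarrow|z|>E$; nothing degenerates.) I do not expect any genuine obstacle beyond this, since Lemma~\ref{technique} already packages the polynomial growth estimates that drive the argument.
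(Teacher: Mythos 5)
Your proposal is correct and follows essentially the same route as the paper: for each $k\in\{1,2,3,4\}$ one proves the contrapositive by applying Lemma~\ref{technique} to $f_k$ (to get a lower bound on $|z|$ from $|f_k(z)|\ge R$) and then to $f_{k-1}$ (to get $|f_{k-1}(z)|\gtrsim R^{1/p_k}$), feeding both into the recursion to force $|f_{k+1}(z)|\ge R$ once $R$ exceeds a threshold, and finally taking $R_0$ as the maximum of the finitely many thresholds. Your use of $E=(R/2)^{1/p_k}$ to guarantee the strict inequality required by Lemma~\ref{technique} is a minor, harmless refinement of the paper's choice $E=R^{1/p_k}$.
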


\begin{proof}
%Assume that $\partial{f(z)} \geq 2$, then $\frac{\vert f_{1}(z) \vert}{\vert z \vert}$  converges to infinity as $\vert z \vert $ goes to infinity, then there exists $R_1 >0$ such that for all %$R \geq R_1$, we have
%$\vert z \vert \geq R$ imply $\vert f(z) \vert \geq \vert z \vert$, then we obtain the result for $k=0$.

Consider $\delta(f_i) $, $ p(f_i)$ and $d(f_i)$ the real numbers defined in Lemma~\ref{technique} corresponding to $f_i$ for $i=1,..,4$. Let $R_1 > \delta (f_1)$  such that for all $R >R_1$, we have $R^{\frac{1}{p(f_1)}+1}- \vert c \vert \geq R$.
Fix  $R >R_1$, and $z \in \mathbb{C}$ such that $\vert f_1(z) \vert \geq R$, then by Lemma~\ref{technique},
$ \vert f_0(z) \vert =\vert z \vert \geq R^{1/p (f_1)} $ ,
hence $\vert f_{2}(z) \vert \geq  R^{\frac{1}{p (f_1)}+1 }- \vert c \vert \geq R$, then the result is true for $k=1$.

Let $R_2 > \delta (f_2)$  such that for all $R >R_2$, we have $d(f_2) R^{\frac{1}{p(f_2)}+1}- \vert c \vert \geq R$.

Consider $R >R_2$  and $z \in \mathbb{C}$ such that $\vert f_2(z) \vert \geq R$, then by Lemma \ref{technique} we obtain that $\vert z \vert \geq  R^{1/p(f_2)}$
hence again by Lemma \ref{technique}, we have  $\vert f_{1}(z)  \vert \geq d(f_2) R^{1/p(f_2)}.$
Thus
$\vert f_3(z) \vert \geq d(f_2) \vert  R^{\frac{1}{p(f_2)}+1}- \vert c \vert  \geq R$, then we have the result for $k=2$.
By the same way, if we choose for $i=3,4,\; R_i > \delta (f_i)$  such that for all $R >R_i$, we have $d(f_i) R^{\frac{1}{p(f_i)}+1}- \vert c \vert \geq R$.
 We prove that the result holds for $k=3$ and $4$.
 Taking $R_0= max\{R_1, R_2, R_3, R_4\}$, we conclude the proof of Lemma \ref{rrrt}.

\end{proof}

{\bf Proof of Proposition \ref{encaixante}}. Let $R >R_0$ be a large number such that
$\K_c=\bigcap_{n=0}^{+\infty} f_{n}^{-1} (D(0, R)).$
 Let us prove by induction that
  for all $k \in \mathbb{N},\;
f_{k+1}^{-1} (D(0, R)) \subset f _{k}^{-1}( D(0,R))$.

By Lemma \ref{rrrt}, the previous inclusion is true for $k=0,1,2,3$. Now
  assume that it is also true for all $k=0, \ldots,
n-1,\; n \geq 4.$

 Let $ z \in f_{n+1}^{-1} (D(0,R))$, then $\vert f_{n+1}(z) \vert < R.$ Assume
that $\vert f_{n}(z) \vert \geq R.$ By using relation (\ref{fn}) and
a triangle inequality, we have
\begin{eqnarray}
 \label{xn}
\vert f_{n-1}(z) \vert <\frac{R+ \vert c \vert}{R}=1+ \frac{ |c|}{R}=O(1).
 \end{eqnarray}

We choose $R$ sufficiently large such that $\frac{R+ \vert c \vert}{R}  \leq R,$
then $\vert f_{n-1}(z) \vert < R.$ Hence, by induction hypothesis
\begin{eqnarray}
\label{relk} \vert f_{k}(z) \vert < R,\; \forall k=0,\ldots, n-2.
\end{eqnarray}
On the other hand,
by using relations (\ref{fn}) and (\ref{xn}), we have

 \begin{eqnarray}
\label{xn2} \vert f_{n-2}(z) \vert > R \frac {R - \vert c \vert }{R+
\vert c \vert}:= h_1(R).
\end{eqnarray}

 By  relations (\ref{fn}), (\ref{xn}) and (\ref{xn2}), we deduce
that
 \begin{eqnarray}
\label{xn3}
\vert f_{n-3}(z) \vert < \frac {(R (1+ \vert c \vert) + \vert c \vert) (R+\vert c \vert )}{R^2 (R- \vert c \vert)}:= h_2(R).
\end{eqnarray}

 By  using relations (\ref{fn}), (\ref{xn2}) and (\ref{xn3}), we deduce
\begin{eqnarray}
 \label{xn4}
\vert f_{n-4}(z) \vert \geq \frac{h_1 (R)- \vert c \vert }{h_2( R)}=  O(R^2) >R.
\end{eqnarray}
 This contradicts relation (\ref{relk}), and we obtain the result.

\hfill $\Box$

\begin{rem}
\begin{enumerate}

\item
It is easy to see that the Julia set $J_c:=\partial \K_c$ is exactly the set of points $z\in\C$ such that $(f_{n})_{n\in\N}$
does not form a normal family in a neighborhood $z$.
\item If $f$ is a constant non null polynomial, then the previous results are still true, in particular $\K_c$ is a non empty compact and simply connected set.

If $f=0$, then $\K_c= \mathbb{C}$ if $0\in \K_c$ and  $\K_c= \emptyset$ if $0 \not \in \K_c$.

\end{enumerate}
\end{rem}

\begin{figure}[htb]
\begin{center}
\includegraphics[width=5cm]{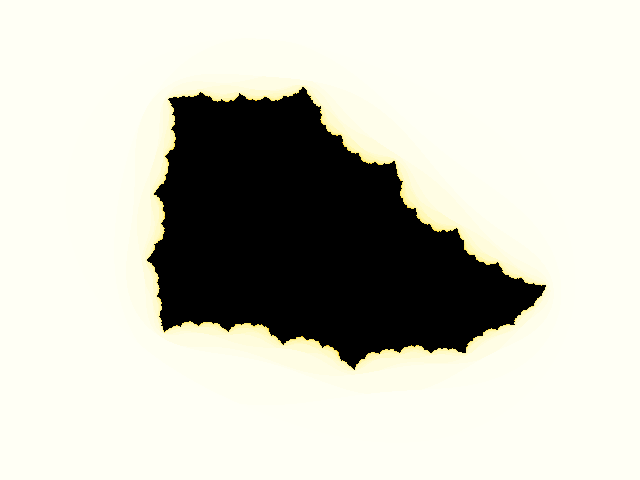}
\includegraphics[width=5cm]{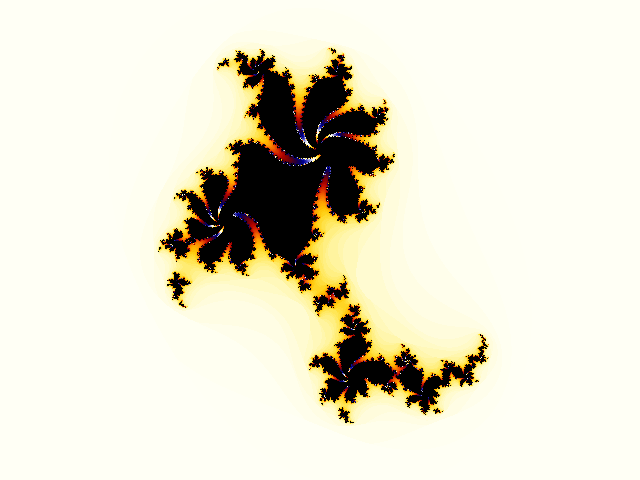}
\caption{Filled-in Julia set intersected with the diagonal, for $c=-0.5+i0.5$ in the left and $c=0.36+i0.575$ in the right}
\label{Filled-in-Julia-set}
\end{center}
\end{figure}

\section{Green's Function}
In this section we introduce the Green's function of the filled-in Julia set, which is the main tool to relate potential theory and dynamics.\\
According to the theory of polynomial endomorphisms in $\C^2$, the Green's function of $H_c(w,z)=(wz+c,w)$ is well defined  and has nice
properties.
%It is a psh function $G:\C^2\rightarrow\R^+$ that satisfied $G(\phi(z,w))=\rho G(z,w)$.
But here we will provide a new characterization and be more specific on the Green's function associated to our Fibonacci System.\\
Let $\log_+$ be the real function defined by $\log_+(x)=\max(\log|x|,0)$. For all $(w,z)\in\C^2$, we
denote $H_{c}^n(w,z): =(h_{n+1}(w,z),h_{n}(w,z))$ for all $n\geq 0$.\\
With this notation $f_{n}(z)=h_n(f(z),z)$. Let us denote $d_{n}$ the degree of $f_n$.  These numbers satisfy $d_0=1$, $d_1=\deg(f)$ and the well known Fibonacci relation $d_{n+1}=d_{n}+d_{n-1}$ for all $n\geq 1$.\\

Before introducing the Green's function we need to establish some control on $h_n$ and $f_n$. The first tools that we  will use are the following technical Lemmas.
\begin{lemm}\label{controlhn}
There exists a real number $R >1$ such that for all $|w|, |z|>R$ and for all $n\geq 0$,
\begin{equation}\label{majorationhn}
R\leq |h_{n}(w,z)|\leq \frac{e^{d_n}}{2}|z|^{d_n}|w|^{d_n}.
\end{equation}
\begin{proof}
Let $R >1$ such that $R^2 - \vert c \vert > R$, then the lower bound follows by the same
reasoning as in Lemma~\ref{twice}.
The upper bound is proved by induction.
Since $h_{0}(w,z)=z$ and $h_{1}(w,z)=w$ the claim is straightforward for $n=0$ and 1. Now assume that~(\ref{majorationhn}) holds
for the steps $n-1$ and $n$,
$$
|h_{n+1}(z)|\leq |h_n(z) h_{n-1}(z)|+|c| \leq\frac{e^{d_{n+1}}}{4}|z|^{d_{n+1}}|w|^{d_{n+1}}+|c|\leq \frac{e^{d_{n+1}}}{2}|z|^{d_{n+1}}|w|^{d_{n+1}}.
$$
For this last estimate we used $2|c|\leq R^2\leq |z| |w|.$
\end{proof}

\end{lemm}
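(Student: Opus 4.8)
The plan is to fix a radius $R>1$ once and for all so that the two numerical requirements $R^{2}-|c|>R$ and $R^{2}\geq 2|c|$ both hold; both are satisfied as soon as $R$ is large enough (for instance $R>\tfrac{1+\sqrt{1+4|c|}}{2}$ together with $R\geq\sqrt{2|c|}$). With such an $R$ I would then establish the lower and the upper bound in (\ref{majorationhn}) separately, each by induction on $n$, using only the recursion $h_{n+1}=h_{n}h_{n-1}+c$ and the Fibonacci identity $d_{n+1}=d_{n}+d_{n-1}$.

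The lower bound is the easy half, and it goes exactly as in Lemma~\ref{twice}. For $n=0,1$ it holds because $h_{0}(w,z)=z$ and $h_{1}(w,z)=w$ have modulus $>R$ by hypothesis; and if $|h_{n-1}(w,z)|,|h_{n}(w,z)|>R$, then the triangle inequality gives $|h_{n+1}(w,z)|\geq |h_{n}(w,z)|\,|h_{n-1}(w,z)|-|c|>R^{2}-|c|>R$, so the induction closes immediately. (Here every iterate is large from the start, which is even more favourable than the situation of Lemma~\ref{twice}.)

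For the upper bound the base cases $n=0,1$ are clear, since $\tfrac{e^{d_{n}}}{2}\geq\tfrac{e}{2}>1$ and $|z|,|w|>1$, whence $|h_{0}(w,z)|=|z|\leq \tfrac{e}{2}|z||w|$ and similarly for $h_{1}$. For the inductive step, assuming (\ref{majorationhn}) at steps $n-1$ and $n$, I would simply multiply the two estimates:
\begin{align*}
|h_{n+1}(w,z)|&\leq |h_{n}(w,z)|\,|h_{n-1}(w,z)|+|c|\\
&\leq \frac{e^{d_{n}+d_{n-1}}}{4}\,|z|^{d_{n}+d_{n-1}}|w|^{d_{n}+d_{n-1}}+|c|\\
&=\frac{e^{d_{n+1}}}{4}\,|z|^{d_{n+1}}|w|^{d_{n+1}}+|c|,
\end{align*}
the last equality being precisely $d_{n+1}=d_{n}+d_{n-1}$ --- which is why the exponential weight $e^{d_{n}}$ is the natural choice, it telescopes under the product. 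It then remains to absorb the additive term $|c|$: since $d_{n+1}\geq 1$, $e^{d_{n+1}}\geq e>2$ and $|z||w|>R^{2}\geq 2|c|$, one has $\tfrac{e^{d_{n+1}}}{4}|z|^{d_{n+1}}|w|^{d_{n+1}}\geq \tfrac{1}{2}|z||w|\geq |c|$, so $|h_{n+1}(w,z)|\leq \tfrac{e^{d_{n+1}}}{2}|z|^{d_{n+1}}|w|^{d_{n+1}}$ and the induction is complete.

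I do not expect a genuine obstacle here, the whole argument being bookkeeping rather than ideas; the one point that needs care is the compatibility of the constants, namely that a single $R$ can be chosen serving both bounds at once, and that the factor $\tfrac14=\tfrac12\cdot\tfrac12$ coming out of the product of the two inductive hypotheses is exactly the slack required to swallow the constant term $c$ and still recover the clean constant $\tfrac12$ in (\ref{majorationhn}).
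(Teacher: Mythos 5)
Your proof is correct and follows essentially the same route as the paper: the lower bound by the triangle-inequality induction of Lemma~\ref{twice} (using $R^2-|c|>R$), and the upper bound by induction, multiplying the two inductive estimates so that the Fibonacci relation $d_{n+1}=d_n+d_{n-1}$ makes the factor $e^{d_n}$ telescope, and absorbing $|c|$ via $2|c|\leq R^2\leq |z||w|$. Your only addition is to state explicitly the condition $R^2\geq 2|c|$ on $R$, which the paper uses implicitly.
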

\begin{lemm}\label{majoration}
There exist  positive real numbers $A$ and $R$  such that for all $n>0$ and for all $z\in\C$
\begin{equation}\label{majorationf}
\left|\logp |f_{n+1}(z)|-\log_+|f_{n}(z)|-\log_+|f_{n-1}(z)|\right|\leq A,
\end{equation}
and similarly,  for all $|z| \geq R$ and $|w|\geq R$ :
\begin{equation}\label{majorationh}
\left|\logp |h_{n+1}(w,z)|-\log_+|h_{n}(w,z)|-\log_+|h_{n-1}(w,z)|\right|\leq A.
\end{equation}
\end{lemm}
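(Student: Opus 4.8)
The plan is to deduce both estimates directly from the recursions $f_{n+1}=f_nf_{n-1}+c$ and $h_{n+1}=h_nh_{n-1}+c$ together with the two elementary facts, valid for all $x,y,a\ge 0$,
\[
\log_+(xy)\le\log_+ x+\log_+ y,\qquad \log_+(x+a)\le\log_+ x+\log(1+a).
\]
Feeding these into the triangle inequality $|f_{n+1}(z)|\le|f_n(z)|\,|f_{n-1}(z)|+|c|$ at once yields, for \emph{every} $z\in\C$ and $n>0$,
\[
\log_+|f_{n+1}(z)|\ \le\ \log_+|f_n(z)|+\log_+|f_{n-1}(z)|+\log(1+|c|),
\]
so the upper part of \eqref{majorationf} holds with $A=\log(1+|c|)$ and no case analysis is needed; the identical computation gives the upper part of \eqref{majorationh} with no restriction on $w,z$. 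Everything thus reduces to a \emph{lower} bound for $\log_+|f_{n+1}(z)|-\log_+|f_n(z)|-\log_+|f_{n-1}(z)|$ and its $h$-analogue.

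For the lower bound I fix the escape radius $R$ supplied by Propositions~\ref{FilledJulia} and~\ref{encaixante}, enlarged so that in addition $R>\max\{2,\sqrt{2|c|}\}$ and $R^2>|c|$. Given $z$ and $n>0$, I split into two cases. If $|f_{n-1}(z)|\ge R$ and $|f_n(z)|\ge R$, then two consecutive terms of the sequence exceed $\max\{2,\sqrt{2|c|}\}$, so $|f_{n+1}(z)|\ge|f_n(z)|\,|f_{n-1}(z)|-|c|\ge\tfrac12|f_n(z)|\,|f_{n-1}(z)|\ge 1$, whence
\[
\log_+|f_{n+1}(z)|\ \ge\ \log_+|f_n(z)|+\log_+|f_{n-1}(z)|-\log 2 .
\]
Otherwise $|f_{n-1}(z)|<R$; I claim that then $|f_{n-1}(z)|$ and $|f_n(z)|$ are both at most a constant $B=B(R,c,f)$, after which the trivial bound $\log_+|f_{n+1}(z)|\ge 0$ closes the case. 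For the claim, note that by the nested inclusions $f_{m+1}^{-1}(D(0,R))\subset f_m^{-1}(D(0,R))$ of Proposition~\ref{encaixante}, if $|f_n(z)|\ge R$ while $|f_{n-1}(z)|<R$ then $n$ is the first index at which $|f_m(z)|$ reaches $R$, so $|f_{n-1}(z)|$ and $|f_{n-2}(z)|$ are both $<R$ and hence $|f_n(z)|=|f_{n-1}(z)f_{n-2}(z)+c|\le R^2+|c|$; and if $|f_n(z)|<R$ there is nothing to prove. (The index $n=1$, where $f_{n-2}$ is not defined, is handled the same way using Lemma~\ref{technique} applied to $f$.) Taking $A:=\max\{\log(1+|c|),\ \log 2,\ 2\log_+B\}$ establishes \eqref{majorationf}.

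Estimate \eqref{majorationh} is then the easy, degenerate case: by the lower bound of Lemma~\ref{controlhn}, as soon as $|z|,|w|\ge R$ one has $|h_n(w,z)|\ge R$ for \emph{all} $n\ge 0$, so the pair $|h_{n-1}(w,z)|,|h_n(w,z)|$ always falls in the first case above (applied to the sequence $n\mapsto h_n(w,z)$), which gives the lower bound $-\log 2$; combined with the already-proved upper bound $\log(1+|c|)$, this yields \eqref{majorationh} with the same $A$.

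The only genuinely delicate step is the second case of the lower bound: a priori $|f_n(z)|$ could be enormous while $|f_{n-1}(z)|$ is minuscule, and then the crude estimate $\log_+|f_{n+1}(z)|\ge 0$ would be worthless. The point that rescues the argument is precisely the monotonicity of Proposition~\ref{encaixante}: once $|f_m(z)|$ reaches $R$ it never drops below $R$ again, so having $|f_n(z)|\ge R$ together with $|f_{n-1}(z)|<R$ pins $n$ down as the first escape index, and then the recursion forces $|f_n(z)|\le R^2+|c|$ to be bounded after all.
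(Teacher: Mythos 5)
Your argument is correct and rests on the same key inputs as the paper's own proof: the escape radius and the nested inclusions of Proposition~\ref{encaixante} for \eqref{majorationf}, and the lower bound of Lemma~\ref{controlhn} to reduce \eqref{majorationh} to the regime where every $|h_k(w,z)|\ge R$. The organization, however, is genuinely different. The paper runs a three-case analysis, each time estimating the ratio $|f_{n+1}|/(|f_n|\,|f_{n-1}|)$, while you obtain the upper bound globally from subadditivity of $\logp$ (no cases at all) and, for the lower bound, you fold the delicate mixed case $|f_{n-1}|<R\le |f_n|$ into the bounded regime via the first-escape-index observation $|f_n|\le R^2+|c|$. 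This buys something concrete: in the paper's third case the ratio estimate is turned into a $\logp$-estimate without addressing the gap between $\log|f_{n-1}|$ and $\logp|f_{n-1}|$ when $|f_{n-1}|<1$ (one must in effect bound $|f_{n-1}|$ from below, which again uses $|f_{n-2}|<R$), whereas your route closes that point automatically. Two small touch-ups: the phrase ``otherwise $|f_{n-1}(z)|<R$'' tacitly uses the monotonicity of Proposition~\ref{encaixante} to exclude the case $|f_{n-1}|\ge R>|f_n|$, and this should be said explicitly; and at $n=2$ the bound $|f_2|\le R^2+|c|$ needs $|f_0|<R$, which is \emph{not} supplied by Proposition~\ref{encaixante} (its inclusions are stated only for $n\ge 1$, and the index-$0$ inclusion can fail, e.g.\ for $f(z)=z/10$), so that index requires the same Lemma~\ref{technique}-type fix you already give for $n=1$. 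Finally, enlarging the $R$ of Proposition~\ref{encaixante} so that it also exceeds $\max\{2,\sqrt{2|c|}\}$ is legitimate because the proof of that proposition allows $R$ to be taken arbitrarily large, but a word to that effect would make the argument self-contained.
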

\begin{proof}
Obviously (\ref{majorationf}) and (\ref{majorationh}) are proved in a very similar way. First we focus on (\ref{majorationf}).
Fix $z \in \mathbb{C}$ and
%(\ref{majorationf}) and leave the second majoration to check to the reader.\\
consider $R >1$ as in Proposition~(\ref{encaixante}).
If $\vert f_{n-1} (z) \vert \geq R$ for some integer $n \geq 1$, then $\vert f_{n} (z) \vert \geq R$.  There are only 3 cases to consider. \\
Either, $|f_{n-1}(z)| \geq R$, then in this case  $|f_{n}(z)| \geq R$. Hence
$$
 \frac{|f_{n+1}(z)|}{|f_n(z)| |f_{n-1}(z)|} \leq \left| 1+  \frac{|c|}{|f_n(z)| |f_{n-1}(z)|} \right|\leq1+\frac{|c|}{R^2}.
$$
Hence
\begin{eqnarray*}
\left|\logp |f_{n+1}(z)|-\logp|f_{n}(z)|-\logp|f_{n-1}(z)|\right|& = & \left|\log( \frac{|f_{n+1}(z)|}{|f_n(z)| |f_{n-1}(z)|} )\right| \\
  									& \leq & \log(1+\frac{|c|}{R^2} ):=A_1.
\end{eqnarray*}
The second case is  $|f_{n-1}(z)| <R$ and $|f_{n}(z)| \leq R$, then
$
|f_{n+1}(z)| < R^2+|c|\leq R^3.$\\
Therefore
$$
\left|\logp |f_{n+1}(z)|-\logp|f_{n}(z)|-\logp|f_{n-1}(z)|\right|\leq 2 \log R+ \log(R^3)=5 \log R:=A_2.
$$
The last case to consider is $|f_{n-1}(z)|< R$ and $|f_{n}(z)| > R$, then $|f_{n+1}(z)| \geq R$,
then
$$
R \leq |f_n(z)f_{n-1}(z)|+|c|
$$
thus we obtain $|f_n(z)f_{n-1}(z)|\geq R-|c|$, then
$$
 \frac{|f_{n+1}(z)|}{|f_n(z)| |f_{n-1}(z)|}\leq 1+\frac{|c|}{R-|c|}.
$$
Taking the $\logp$ we conclude.\\
Regarding (\ref{majorationh}), we assume now that both $|z|$ and $|w|$ are greater than $R$. According
to Lemma~\ref{controlhn}
$
|h_{k}(w,z)|\geq R.
$
Thus we are precisely in the first
case above and we can proceed in the same way.
\end{proof}

%\begin{rem}
%\label{divise}
%The last Proposition is still true if we change $f_n$ by $f_n / a_{d_{n},n}$ where $a_{d_{n},n}$ is the {\bf coefficient of greater monome}
%\end{rem}

\begin{prop}\label{normale}
There exists a constant $C>0$ such that, for all $n\geq 0$ and $z \in \mathbb{C}$
$$
\left|\frac{1}{d_{n+1}}\logp|f_{n+1}(z)|-\frac{1}{d_{n}}\logp|f_{n}(z)|\right| \leq\frac{C}{d_{n+1}}.
$$
\end{prop}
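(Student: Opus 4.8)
The plan is to reduce everything to the Fibonacci relation $d_{n+1}=d_n+d_{n-1}$ combined with the \emph{additive} control furnished by Lemma~\ref{majoration}. Write $u_n(z):=\logp|f_n(z)|$ and, by Lemma~\ref{majoration}, fix $A>0$ with $|u_{n+1}-u_n-u_{n-1}|\le A$ for all $n\ge1$; set $\epsilon_n:=u_{n+1}-u_n-u_{n-1}$, so $|\epsilon_n|\le A$. Introduce the auxiliary quantity
$$
\beta_n(z):=\Bigl|u_{n+1}(z)-\tfrac{d_{n+1}}{d_n}\,u_n(z)\Bigr|=d_{n+1}\Bigl|\tfrac{1}{d_{n+1}}\logp|f_{n+1}(z)|-\tfrac{1}{d_n}\logp|f_n(z)|\Bigr|,
$$
so that the assertion of the proposition is exactly: $\beta_n(z)\le C$ for a constant $C$ independent of $n$ and $z$.

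The first step is a one-step identity. Substituting $u_{n+1}=u_n+u_{n-1}+\epsilon_n$ and $d_{n+1}=d_n+d_{n-1}$ and simplifying gives, for every $n\ge1$,
$$
\frac{u_{n+1}}{d_{n+1}}-\frac{u_n}{d_n}=\frac{\epsilon_n}{d_{n+1}}-\frac{d_{n-1}}{d_{n+1}}\Bigl(\frac{u_n}{d_n}-\frac{u_{n-1}}{d_{n-1}}\Bigr).
$$
Multiplying by $d_{n+1}$, taking absolute values and recognising $\bigl|u_n-\tfrac{d_n}{d_{n-1}}u_{n-1}\bigr|=\beta_{n-1}$ yields the recursive inequality $\beta_n\le A+\tfrac{d_{n-1}}{d_n}\,\beta_{n-1}$ valid for all $n\ge1$. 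The second step is the elementary observation about the ratios $r_n:=d_n/d_{n-1}$: here $r_1=\deg(f)$ and $r_n=1+1/r_{n-1}$ for $n\ge2$, so a two-line induction shows $1<r_n\le2$ for all $n\ge2$, hence $r_n\ge 3/2$ and $d_{n-1}/d_n\le 2/3$ for all $n\ge3$.

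The conclusion is then a short induction. For the base cases, $\beta_0=\bigl|\logp|f(z)|-\deg(f)\logp|z|\bigr|$ is bounded by a constant $B_0=B_0(f)$ — the classical comparison for a polynomial, since $\tfrac1{\deg(f)}\logp|f(z)|-\logp|z|$ extends continuously across $\infty$ and is continuous (hence bounded) on any compact part of $\C$; and, using the recursive inequality with $n=1,2$ together with $d_1/d_2<1$, the quantities $\beta_1,\beta_2$ are bounded by constants too. For $n\ge3$ the recursive inequality reads $\beta_n\le A+\tfrac23\beta_{n-1}$, so with $M:=\max\{3A,\ \sup_z\beta_2(z)\}$ one gets $\beta_n\le M$ for all $n\ge2$ by induction (as $A+\tfrac23M\le M$). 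Taking $C:=\max\{\sup_z\beta_0,\ \sup_z\beta_1,\ M\}$ finishes the proof.

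There is essentially no hard analytic point: the only non-formal input is the base-case bound on $\beta_0$, i.e.\ $\logp|f|=\deg(f)\logp|z|+O(1)$. The one place requiring a little care is purely combinatorial: the geometric contraction $d_{n-1}/d_n\le 2/3$ only holds from $n\ge3$ on (when $\deg(f)$ is large, $d_1/d_2=\deg(f)/(\deg(f)+1)$ is close to $1$), which is precisely why the first three values $\beta_0,\beta_1,\beta_2$ must be absorbed directly into the final constant rather than fed through the contraction.
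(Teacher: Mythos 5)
Your argument is correct, and it rests on exactly the same two inputs as the paper's proof: the additive estimate $|\logp|f_{n+1}|-\logp|f_n|-\logp|f_{n-1}||\leq A$ of Lemma~\ref{majoration}, and the boundedness of the boundary term $\logp|f_1(z)|-d_1\logp|z|$ (your $\beta_0$, the paper's $I(z)$). Where you differ is in the bookkeeping: the paper unrolls the recursion completely, writing $\logp|f_{n+1}|-\frac{d_{n+1}}{d_n}\logp|f_n|$ as an explicit alternating telescoping sum with coefficients $(-1)^{k+1}\frac{d_{n-k-1}}{d_n}$ plus the term $\frac{(-1)^n}{d_n}I(z)$, and then controls it by the geometric decay $|d_{n-k-1}/d_n|\leq D\rho^{-k}$; you instead derive the one-step inequality $\beta_n\leq A+\frac{d_{n-1}}{d_n}\beta_{n-1}$ (I checked the identity behind it, it is right) and close the estimate by a contraction induction using $d_{n-1}/d_n\leq 2/3$ for $n\geq 3$. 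Your route avoids both the verification of the somewhat fiddly signed identity and the precise Fibonacci asymptotics, at the cost of having to absorb $\beta_0,\beta_1,\beta_2$ into the constant by hand — which you correctly flag, since $d_1/d_2$ can be close to $1$ when $\deg f$ is large; the paper's explicit series, on the other hand, produces in passing the signed sums $\sum(-1)^n/(d_nd_{n+1})$ that reappear in the asymptotics of the Green's function in Theorem~\ref{green}. Two small points to keep in mind: your recursion is only valid for $n\geq 1$ (you do respect this), and the whole statement tacitly assumes $f$ non-constant so that $d_1\geq 1$, as in Definition~\ref{FiboSys}.
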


\begin{proof}

This key proposition is based on the following identity.
\begin{eqnarray*}
\logp|f_{n+1}(z)|-\frac{d_{n+1}}{d_{n}}\logp|f_{n}(z)| & = & \logp |f_{n+1}(z)|- \logp|f_{n}(z)|-\logp|f_{n-1}(z)| \\
											& & +\sum_{k=0}^{n-2}(-1)^{k+1}\frac{d_{n-k-1}}{d_{n}}(\logp |f_{n-k}(z)|\\
& & -\logp|f_{n-k-1}(z)|-\logp|f_{n-k-2}(z)|) \\
											& & +\frac{(-1)^n}{d_{n}}(d_0\logp |f_{1}(z)|-d_1\logp |f_{0}(z)|).\\	
\end{eqnarray*}
It's a simple calculation to check this equality. Indeed, if we re-arrange the terms of $\logp |f_{n-k}|$ for $1<k<n-1$, we obtain
$$
\frac{(-1)^{k}} {d_{n}}( - d_{n-k-1}-d_{n-k} + d_{n-k+1})=0.
$$
But $(d_{n})_{n \geq 0}$ is a Fibonacci sequence, so we obtain :
$$
\left| \frac{d_{n-k-1}}{d_{n}}\right| \leq\frac{D}{\rho^k} \mbox{ where } \rho=\frac{1+\sqrt{5}}{2}  \mbox{ is the golden mean and } D >0   \mbox{ is a real constant}.
$$
From this estimation combined with Lemma~\ref{majoration} and the identity above we deduce :
\begin{equation}\label{interm}
\left| \logp|f_{n+1}(z)|-\frac{d_{n+1}}{d_{n}}\logp|f_{n}(z)| \right|\leq \sum_{k=0}^{n} \frac{D A}{\rho^k}+| \frac{1}{d_{n}}(d_0\logp|f_{1}(z)|-d_1\logp|f_{0}(z)|) |.
\end{equation}
Let $I(z):=d_0\logp|f_{1}(z)|-d_1\logp|f_{0}(z)|$.
For $|z|\leq R$, $|I(z)|$ is clearly bounded from above. On the other hand, whenever $|z|>R$ then $I(z)=\log|f_{1}(z)|-d_1\log|z|$,
Let
$$f_1(z)= a_{d_{1}}z^{d_1}+\ldots + a_1 z+a_0=z^{d_1}(a_{d_1} +\ldots +a_1 z^{1-d_1}+a_0z^{-d_1}), %a_{d_1-1}z^{-1}+
$$
then
$$
I(z)=\log(\left| a_{d_{1}} +a_{d_1-1}z^{-1}+\ldots a_1 z^{1-d_1}+a_0z^{-d_1}\right|).
$$
Thus, if $|z|>R$ we obtain $|I(z)|\leq \log(\max(\vert a_i \vert) (1+ d_1/ R))$.

Using (\ref{interm}), this proves immediately that  $\left| \logp|f_{n+1}(z)|-\frac{d_{n+1}}{d_{n}}\logp|f_{n}(z)| \right|$ is bounded above by a constant.
 We obtain therefore the expected result dividing this inequality by $d_{n+1}$.
\end{proof}

In the same spirit as the previous proposition one can show, the following:
\begin{prop}
\label{normale2}
There exist  real numbers $C>0$ and $R >1$ (as in Proposition \ref{encaixante}) such that  for all $n\geq 0$ and all $|w|, |z|>R$,
\begin{equation}\label{normale4}
\left|\logp|h_{n+1}(w,z)|-\frac{d_{n+1}}{d_{n}}\logp|h_{n}(w,z)|\right| \leq C
\end{equation}
and
\begin{equation}\label{normale3}
\left|\logp|h_{n+1}(w,z)|-\rho \logp|h_{n}(w,z)|\right| \leq C_1+C_2\rho^{-n} (\log|z|+\log|w|), \textrm{ where } C_1, C_2 \in \mathbb{R}.
\end{equation}
\end{prop}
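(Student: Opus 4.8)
The plan is to establish \eqref{normale4} exactly as in Proposition~\ref{normale}, then derive \eqref{normale3} by comparing the ratio $d_{n+1}/d_n$ with the golden mean $\rho$ and keeping track of the accumulated error. For \eqref{normale4}, I would reproduce the telescoping identity from the proof of Proposition~\ref{normale} with $\logp|f_k(z)|$ replaced by $\logp|h_k(w,z)|$: writing $\logp|h_{n+1}|-\frac{d_{n+1}}{d_n}\logp|h_n|$ as the leading triple-difference plus the alternating sum $\sum_{k=0}^{n-2}(-1)^{k+1}\frac{d_{n-k-1}}{d_n}\bigl(\logp|h_{n-k}|-\logp|h_{n-k-1}|-\logp|h_{n-k-2}|\bigr)$ plus the boundary term $\frac{(-1)^n}{d_n}(d_0\logp|h_1(w,z)|-d_1\logp|h_0(w,z)|)$. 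Here the decisive simplification is that, since $|w|,|z|>R$, Lemma~\ref{controlhn} gives $|h_k(w,z)|\geq R>1$ for every $k$, so the boundary term is simply $\frac{(-1)^n}{d_n}(\log|w|-d_1\log|z|)$, which is $O\!\bigl((\log|w|+\log|z|)/d_n\bigr)$, while each triple-difference is bounded by $A$ via \eqref{majorationh}, and $|d_{n-k-1}/d_n|\leq D\rho^{-k}$ as before. Summing the geometric series yields \eqref{normale4} with a constant $C$ absorbing $DA\sum_k\rho^{-k}$ and the (bounded, since $d_n\geq 1$) contribution of the boundary term.

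For \eqref{normale3} I would start from \eqref{normale4} and write
\[
\logp|h_{n+1}|-\rho\logp|h_n| = \Bigl(\logp|h_{n+1}|-\tfrac{d_{n+1}}{d_n}\logp|h_n|\Bigr) + \Bigl(\tfrac{d_{n+1}}{d_n}-\rho\Bigr)\logp|h_n|.
\]
The first bracket is bounded by $C$. For the second, the classical estimate for Fibonacci-type sequences gives $\bigl|\tfrac{d_{n+1}}{d_n}-\rho\bigr|\leq D'\rho^{-2n}$ for some constant $D'$ (the subdominant root of $x^2=x+1$ is $-\rho^{-1}$, so the error in the ratio decays like $\rho^{-2n}$). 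On the other hand the upper bound in Lemma~\ref{controlhn} gives $\logp|h_n(w,z)|\leq d_n+d_n\log|z|+d_n\log|w|-\log 2\leq d_n\bigl(1+\log|z|+\log|w|\bigr)$, and since $d_n\leq D''\rho^n$, we get $\logp|h_n|\leq D''\rho^n(1+\log|z|+\log|w|)$. Multiplying, the second bracket is bounded by $D'D''\rho^{-n}(1+\log|z|+\log|w|)$, which has exactly the shape $C_1\rho^{-n}+C_2\rho^{-n}(\log|z|+\log|w|)$; combining with the constant $C$ from the first bracket and renaming constants produces the stated form $C_1+C_2\rho^{-n}(\log|z|+\log|w|)$ (absorbing the $\rho^{-n}$ in front of the pure constant into $C_1$ since $\rho^{-n}\leq 1$).

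The only genuinely delicate point is making the two quantitative Fibonacci estimates precise and uniform in $n$: that $|d_{n-k-1}/d_n|\leq D\rho^{-k}$ (already invoked in Proposition~\ref{normale}) and that $|d_{n+1}/d_n-\rho|$ decays geometrically. Both follow from Binet's formula $d_n=\alpha\rho^n+\beta(-\rho^{-1})^n$ with $\alpha>0$ (here $\alpha,\beta$ depend on $d_0,d_1$, and $\alpha>0$ since $d_1>0$), which makes $d_n\asymp\rho^n$ and gives $\frac{d_{n+1}}{d_n}-\rho=\frac{\beta(-\rho^{-1})^n(-\rho^{-1}-\rho)}{d_n}=O(\rho^{-2n})$; once this is in hand the rest is the bookkeeping sketched above. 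I would state these two facts as a one-line observation (or cite that $(d_n)$ satisfies the Fibonacci recursion) and then carry out the two displays, so the whole proof is a short variant of Proposition~\ref{normale} plus this elementary asymptotic input.
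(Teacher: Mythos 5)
Your overall route is the same as the paper's: the paper proves \eqref{normale4} by exactly the adaptation of Proposition~\ref{normale} that you sketch (it simply says it is ``proved in the same way'' and leaves it to the reader), and it deduces \eqref{normale3} precisely as you do, via $d_n=\lambda_0\rho^n+\lambda_1(-\rho)^{-n}$, hence $\frac{d_{n+1}}{d_n}=\rho+O(\rho^{-2n})$, together with the upper bound $\logp|h_n(w,z)|\leq d_n\bigl(1+\log|z|+\log|w|\bigr)$ from Lemma~\ref{controlhn}. So the second half of your argument is essentially identical to the paper's proof.

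There is, however, one genuine flaw in your treatment of \eqref{normale4}: the boundary term. You correctly identify it as $\frac{(-1)^n}{d_n}\bigl(\logp|h_1|-d_1\logp|h_0|\bigr)=\frac{(-1)^n}{d_n}\bigl(\log|w|-d_1\log|z|\bigr)$, but the claim that its contribution is ``bounded, since $d_n\geq 1$'' is not correct: it is only $O\!\bigl(\rho^{-n}(\log|w|+\log|z|)\bigr)$, which is unbounded in $(w,z)$ for each fixed $n$ (take $n=0$: the left side of \eqref{normale4} is $|\log|w|-d_1\log|z||$, which is unbounded on $|w|,|z|>R$). This is exactly the point where the $h$-case differs from the $f$-case: in Proposition~\ref{normale} the boundary term $\log|f_1(z)|-d_1\log|z|$ is bounded because $f_1$ is a degree-$d_1$ polynomial in $z$, whereas here $h_1=w$ and $h_0=z$ are independent variables and no cancellation occurs. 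The honest conclusion of your telescoping argument is $\bigl|\logp|h_{n+1}|-\frac{d_{n+1}}{d_n}\logp|h_n|\bigr|\leq C+C'\rho^{-n}(\log|w|+\log|z|)$, i.e.\ \eqref{normale4} needs this correction term (a defect shared, strictly speaking, by the paper's own statement, which silently absorbs it). This does not damage the rest: carrying the extra term along, your derivation of \eqref{normale3} goes through unchanged, and \eqref{normale3} is what is actually used later (in Proposition~\ref{Sibony}), so the fix is purely a matter of stating \eqref{normale4} with, or deducing it only up to, the $\rho^{-n}(\log|w|+\log|z|)$ error.
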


\begin{proof}
The first inequality is proved  in the same way as Proposition~\ref{normale} and we leave it to the reader. Now we deduce (\ref{normale3}) from inequality~(\ref{normale4}).
Recall $d_n=\lambda_0\rho^n+\lambda_1  (-\rho)^{-n}$ where $\lambda_0,\lambda_1$ are real numbers,
Thus
$$
\frac{d_{n+1}}{d_n}=\frac{\lambda_0\rho^{n+1}+\lambda_1(-\rho)^{-n-1}}{\lambda_0\rho^n+\lambda_1(-\rho)^{-n}}= \rho + O(\rho^{-2n}).$$
 %\rho(1+\frac{\lambda_1}{\lambda_0}(\rho^2-1)\rho^{-2n}+o(\rho^{-2n})).
Replacing this expression of $\frac{d_{n+1}}{d_n}$ in the estimate~(\ref{normale4}), we deduce
$$
\left|\logp|h_{n+1}(w,z)|-\rho \logp|h_{n}(w,z)|\right| \leq {C} + O(\rho^{-2n})\logp|h_{n}(w,z)|.
$$
From Lemma~\ref{controlhn} one deduce for all $|z|, |w|\geq R$,
$$
\logp|h_{n}(w,z)|\leq d_n+d_n(\log|z|+\log|w|).
$$
This yields
 $$
\left|\logp|h_{n+1}(w,z)|-\rho \logp|h_{n}(w,z)|\right| \leq {C}+ O(\rho^{-n})(\log|z|+\log|w|).
$$
\end{proof}
Now, let us define the (dynamical) Green's function of the compact $\K_c$, for all $z\in\C$:
$$
g(z)=g_c(z):=\lim_{n\rightarrow+\infty}  \frac{1}{d_{n}}\logp|f_n(z)|.
$$
This definition makes sense. Indeed,
\begin{thm} \label{green}
The function $g_c:\C\rightarrow\R_+$ satisfies the following properties:
\begin{enumerate}
\item $g_c$ is harmonic in $\C\setminus \K_c$;
\item $\K_c$ is exactly the set $g_c^{-1}(\{0\})$;
\item $g_c(z)-\log|z|$ tends to the constant $(\log \vert a_{d_1} \vert) \sum_{n=0}^{+\infty} \frac{(-1)^n}{d_{n}d_{n+1}}$ as $|z|$ tends to $+\infty$, where $f(z)= f_1(z)= a_{d_1} z^{d_1}+ \cdots+ a_0$;
\item $(c,z)\mapsto g_c(z)$ is continuous.
\end{enumerate}
\end{thm}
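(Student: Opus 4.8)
The strategy is to establish that $g_c$ is a well-defined limit first, and then read off each of the four properties from the uniform estimates already proved. For the existence of the limit, I would use Proposition~\ref{normale}: the sequence $u_n(z) := \frac{1}{d_n}\logp|f_n(z)|$ satisfies $|u_{n+1}(z)-u_n(z)|\le C/d_{n+1}$, and since $d_{n+1}\ge \rho^n$ (the Fibonacci growth), the series $\sum_n (u_{n+1}-u_n)$ converges absolutely and \emph{uniformly on compact subsets of $\C$}. Hence $g_c$ exists everywhere and is a locally uniform limit of the continuous functions $u_n$, so $g_c$ is continuous on $\C$. For the harmonicity in part (1): away from $\K_c$ the functions $\logp|f_n|$ eventually equal $\log|f_n|$ (by the escape radius, once $|f_n(z)|>R$ it stays large), so on any small disk $D\subset\C\setminus\K_c$ the $u_n$ are harmonic for large $n$; a locally uniform limit of harmonic functions is harmonic, giving (1).

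For part (2): if $z\in\K_c$ then $|f_n(z)|\le R$ for all $n$ by Proposition~\ref{FilledJulia}, so $\logp|f_n(z)|\le \log_+R$ and $u_n(z)\to 0$, i.e. $g_c(z)=0$. Conversely, if $z\notin\K_c$ then some $|f_k(z)|>R$, and by iterating the relation as in Lemma~\ref{twice}/Proposition~\ref{Keyprop} one gets $|f_{k+n}(z)|\ge 2(M/2)^{F_n}$; taking $\logp$, dividing by $d_{k+n}$ and using that $F_n$ and $d_{k+n}$ are both comparable to $\rho^n$, one finds $g_c(z)\ge \liminf \frac{F_n}{d_{k+n}}\log(M/2)>0$. (More cleanly: by Proposition~\ref{normale} telescoped, $g_c(z) = \frac{1}{d_{k+1}}\log|f_{k+1}(z)| + \sum_{m\ge k+1}(u_{m+1}-u_m)(z)$, and once one point of the orbit is large enough the remainder sum cannot kill the main term; choosing $R$ a little larger than the escape radius handles this.) So $g_c^{-1}(\{0\})=\K_c$.

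For part (3), the asymptotic at infinity: for $|z|>R$ we have $g_c(z)=\lim u_n(z)$ with all $\logp$'s equal to genuine $\log$'s, so writing $f_1(z)=a_{d_1}z^{d_1}(1+o(1))$ and telescoping the identity in the proof of Proposition~\ref{normale} one obtains
\[
g_c(z)-\log|z| \;=\; \sum_{n=0}^\infty \frac{(-1)^n}{d_n d_{n+1}}\bigl(d_0\logp|f_1(z)|-d_1\logp|f_0(z)|\bigr) + (\text{terms}\to 0),
\]
and the bracket tends to $\log|a_{d_1}|$ as $|z|\to\infty$ (this is exactly the computation of $I(z)$ in Proposition~\ref{normale}), while $\sum \frac{(-1)^n}{d_nd_{n+1}}$ converges; the cross terms involving $\logp|f_n|-\logp|f_{n-1}|-\logp|f_{n-2}|$ each tend to a \emph{finite} limit as $|z|\to\infty$ but one must check their weighted sum telescopes to $0$, which it does because $f_n/(f_{n-1}f_{n-2})\to 1$ uniformly for $|z|$ large. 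This identifies the constant as $(\log|a_{d_1}|)\sum_{n\ge0}\frac{(-1)^n}{d_nd_{n+1}}$. Finally, for part (4), joint continuity in $(c,z)$: each $f_n(z)$ is a polynomial in $z$ with coefficients polynomial in $c$, hence $u_n(c,z)$ is continuous in $(c,z)$; the estimate $|u_{n+1}-u_n|\le C/d_{n+1}$ can be made with a constant $C$ uniform for $c$ in a compact set (inspect the proof of Proposition~\ref{normale} — the only $c$-dependence is through $|c|$ and the choice of $R$, both controllable locally), so the convergence $u_n\to g$ is locally uniform in $(c,z)$, and a locally uniform limit of jointly continuous functions is jointly continuous.

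The main obstacle I expect is part (3): making the telescoped identity rigorous at infinity requires justifying that the infinite sum of the ``Fibonacci defect'' terms $\logp|f_{n-k}|-\logp|f_{n-k-1}|-\logp|f_{n-k-2}|$, weighted by $\pm d_{n-k-1}/d_n$, converges to $0$ (not merely to something finite) as $|z|\to\infty$, uniformly enough to interchange the limit in $n$ with the limit in $z$. The clean way is: for $|z|\ge R$ each defect term is $\log|1+c/(f_{n-k-1}f_{n-k-2})|$, which is $O(R^{-2})$ \emph{independently of $k$ and $n$}, so the weighted sum is $O(R^{-2}\sum_k \rho^{-k})=O(R^{-2})\to 0$ as $R\to\infty$; combined with dominated convergence in the index $n$ this closes the argument. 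Everything else is a routine consequence of the uniform estimates from Section 3.
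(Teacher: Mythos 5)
Your proposal is correct, and for items (1), (2) and (4) it follows the paper's argument essentially verbatim: the series $\sum_n (g_{n+1}-g_n)$ with the uniform bound $C/d_{n+1}$ from Proposition~\ref{normale}, harmonicity of the $g_n$ off $\K_c$ (where eventually $\log_+=\log$) passed to the locally uniform limit, the Fibonacci growth $|f_{n_0+k}(z)|>\lambda^{F_k}R$ for escaping points giving $g_c(z)\geq \lim \frac{F_k}{d_{n_0+k}}\log\lambda>0$ (your single-large-value starting point is fine because the nesting in Proposition~\ref{encaixante} upgrades it to two consecutive large values), and joint continuity from uniform convergence of the jointly continuous $g_n$, with the constant made locally uniform in $c$. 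The only genuine divergence is item (3): the paper computes the limit of each individual term $g_{n+1}(z)-g_n(z)$ as $|z|\to\infty$ by an induction on leading coefficients, $a_{d_n,n}=a_{d_1}^{F_{n-1}}$, together with the determinant identity $F_n d_n-F_{n-1}d_{n+1}=(-1)^n$ obtained from powers of the Fibonacci matrix, and then sums using normal convergence; you instead telescope the identity already displayed in the proof of Proposition~\ref{normale}, so that the constant appears as $\bigl(\sum_{n\geq 0}\tfrac{(-1)^n}{d_nd_{n+1}}\bigr)\lim_{|z|\to\infty} I(z)$ with $I(z)\to\log|a_{d_1}|$, and the ``defect'' terms $\log\bigl|1+c/(f_{m-1}f_{m-2})\bigr|$, weighted geometrically, vanish at infinity. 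Your route avoids the leading-coefficient induction and the matrix identity, at the price of having to show that $\inf_m |f_m(z)|\to\infty$ as $|z|\to\infty$ uniformly in $m$ (so the defects vanish uniformly); this is exactly the obstacle you identify, and it is available from the same escape estimates used in item (2), so the argument closes. Both computations produce the same constant; the paper's is more explicit about the polynomials $f_n$, yours reuses the recursion identity more economically.
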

\begin{proof}
We forget obvious dependencies on $c$ when not necessary. The first point is more or less obvious, it suffices to write $g$ as the series
\begin{equation}\label{serie-green}
g(z)-\logp|z|=\sum_{n=0}^{+\infty}g_{n+1}(z)-g_{n}(z)
\end{equation} and in view of Proposition~(\ref{normale}) the functions $g_n$:
$$
g_n(z)=\frac{1}{d_{n}}\logp|f_n(z)|,
$$
form a sequence of harmonic functions that converge uniformly to $g$.\\
Clearly, if $(f_{n}(z))_{n\in\N}$ is bounded then $g(z)=0$.
The converse requires to be a little more cautious. Assume $z\notin\K_c$, according to Proposition~\ref{encaixante} there exists
$n_0$ such that both $|f_{n_0}(z)|$ and $|f_{n_0+1}(z)|>R$. Let $\lambda=\frac{\ds \min\{|f_{n_0}(z)|,|f_{n_0+1}(z)|\}}{R}$. Obviously $\lambda>1$. \\
We claim that for all $k\geq 0$, $|f_{n_0+k}(z)|>\lambda^{{F}_k} R$ where ${F}_k$ is the Fibonacci number with initial terms $F_0=F_1=1$. For $k=0$ and $1$ this
is just the definition of $\lambda$. Now by induction, assume  for all $p\leq k,\; |f_{n_0+p}(z)|>\lambda^{{F}_p} R$. Then
\begin{eqnarray*}
|f_{n_0+k+1}(z)| & \geq &|f_{n_0+k}(z) f_{n_0+k-1}(z)|-|c|\\
			&>& \lambda^{{F}_{k+1}}R^2-|c|
			\geq   \lambda^{{F}_{k+1}}R.
\end{eqnarray*}
For this last estimate we use the fact that $R^2-|c|\geq R$, and the claim follows.

Now we return to the Green's function:
$$
g_{n_0+k}(z)\geq \frac{{F}_k}{d_{n_0+k}}\log\lambda
$$
and this guarantees that $g(z)=\lim_{k\rightarrow\infty} g_{n_0+k}(z)>0$ whenever $z\notin\K_c$.
\\
Concerning the third point, each term in~(\ref{serie-green}) goes to zero as $n\rightarrow +\infty$. Indeed, let us write
$f_n(z)= a_{d_{n},n} z^{d_{n}}+a_{{d_{n}-1}, n} z^{d_{n}-1}+\ldots+a_{0,n}$ for all $n \geq 0.$
Since $f_{0}(z)=z$, it is easy to see by induction that $a_{d_{n},n}= a_{d_{1},1}^{F_{n-1}}$ for all integer $n \geq 1$.
Therefore

$
g_{n+1}(z)-g_{n}(z)=
(\frac {F_{n}}{d_{n+1}}- \frac {F_{n-1}}{d_{n}}) \log \vert a_{d_{1},1}\vert + \frac {1}{d_{n+1}} \logp \vert f_{n+1}(z) / a_{d_{n+1},n+1} z^{d_{n+1}}\vert -
\frac{1}{d_{n}} \logp \vert f_{n}(z) / a_{d_{n},n} z^{d_{n}}\vert.
$
On the other hand, for all integer $n \geq 2$,
$$
\left (
\begin{array}{cc}
F_{n-1}& F_n \\
 d_n& d_{n+1}
\end{array}
\right)
=
\left (
\begin{array}{cc}
F_{n-2}& F_{n-1} \\
 d_{n-1}& d_{n}
\end{array}
\right)
\left (
\begin{array}{cc}
0& 1\\
 1& 1
  \end{array}
\right)
$$
Hence
$$
\left (
\begin{array}{cc}
F_{n-1}& F_n \\
 d_n& d_{n+1}
\end{array}
\right)
=
\left (
\begin{array}{cc}
F_{0}& F_{1} \\
 d_{1}& d_{2}
\end{array}
\right)
M^{n-1}$$
where
$M=
\left (
\begin{array}{cc}
0& 1\\
 1& 1
  \end{array}
\right)
$.
Thus $F_n d_n- F_{n-1}d_{n+1}= (F_1 d_1- F_0 d_2) (-1)^{n-1}= (-1)^n$.
Therefore, as $\vert z \vert$ converges to $+\infty$,  $g_{n+1}(z)-g_{n}(z)$ converges  to
$\frac {(-1)^n}{d_{n}d_{n+1}}  \log \vert a_{d_{1},1}\vert$.
Since the series is normally convergent we
deduce that $g(z)-\log(|z|)$ tends to  $\log \vert a_{d_{1},1}\vert \sum_{n=0}^{+\infty} \frac{(-1)^n}{d_{n}d_{n+1}}$ as $\vert z \vert \rightarrow +\infty$.
Each $g_n$ depends continuously  on $(z,c)$, thus the sum of
the series is also continuous.

\end{proof}

\begin{rem}
According to Proposition~\ref{normale2},  we can also define the 2-dimensional Green's function associated to $\phi$. For all $|z|, |w|>R$, let
$$
G(w,z):=\lim_{n\rightarrow+\infty}  \frac{1}{d_{n}}\logp|h_n(w,z)|.
$$
This formula makes sense and $G$ and $g$ are related by $G(f(z),z)=g(z)$. Moreover, $G$ satisfies
the functional equation $G\circ H_c(w,z)=\rho G(w,z)$. This fact can be interpreted in connection with the existence of invariant measure on the Julia set for which the action of $H_c$ on the Julia set is mixing and with maximal entropy given here by $\log(\rho)$, such probability measure exists by virtue of Dinh-Sibony Theorem (see \cite{guedj} and the references therein).
 One can obtain an alternative proof of Proposition 2.6 using the properties of $G$. We shall include the proof here for the convenience of the reader (see Proposition 3.8).
\end{rem}
\begin{rem}
Recall that we denote $\rho=\frac{1+\sqrt5}{2}$,  $d_n$ is equal to $\lambda_0\rho^n+\lambda_1((-1/\rho)^n)$ with
$\lambda_0,  \lambda_1$ constants which essentially depends on $d_1$. Thus $\frac{d_n}{\rho^n}$ tends to $\lambda_0$ and we could replace
$d_{n}$ by $\lambda_0 {\rho^n}$ in the definition of the Green's function
$$
g(z)=\lim_{n\rightarrow+\infty}  \frac{1}{\lambda_0 \rho^n}\logp|f_n(z)|.
$$
\end{rem}
Theorem~\ref{green} admits several consequences.
\begin{prop}
The filled-in Julia set is a full compact subset of $\C$ i.e. the complementary has no bounded component.
 The logarithmic capacity of the filled-in Julia set is equal to $e^{\sigma}$ where
$\sigma=\log |a_{d_1}| \sum_{n=0}^{+\infty} \frac{(-1)^n}{d_{n}d_{n+1}}.$
\end{prop}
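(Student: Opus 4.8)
The plan is to derive both assertions from the Green's function $g_c$ of Theorem~\ref{green}. A preliminary observation: $g_c$ is continuous on all of $\C$. Indeed the functions $g_n(z)=\frac{1}{d_n}\logp|f_n(z)|$ are continuous, and Proposition~\ref{normale} gives $\|g_{n+1}-g_n\|_\infty\le C/d_{n+1}$; since the $d_n$ grow exponentially, the series $\sum_{n\ge 0}(g_{n+1}-g_n)$ converges uniformly on $\C$, so $g_n\to g_c$ uniformly and $g_c$ is continuous.

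\emph{Fullness.} Suppose, for contradiction, that $\C\setminus\K_c$ has a bounded connected component $U$. Each point of $\partial U$ lies in $\K_c$: a boundary point outside the closed set $\K_c$ would have a connected open neighbourhood inside $\C\setminus\K_c$ meeting $U$, hence contained in $U$, contradicting $\partial U\cap U=\emptyset$. So $g_c$ is harmonic on $U$, continuous on the compact set $\overline U$, and vanishes on $\partial U\subseteq\K_c=g_c^{-1}(\{0\})$. By the maximum principle $g_c\le 0$ on $\overline U$, and combined with $g_c\ge 0$ this gives $g_c\equiv 0$ on $U$, so $U\subseteq\K_c$ by Theorem~\ref{green}(2): impossible for a nonempty open subset of $\C\setminus\K_c$. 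Hence $\widehat\C\setminus\K_c$ has no bounded component, i.e. $\K_c$ is full.

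\emph{Capacity.} After the first step $\Omega:=\widehat\C\setminus\K_c$ is a domain containing $\infty$, and the plan is to identify $g_c|_\Omega$ with the classical Green's function $g_\Omega(\cdot,\infty)$ with pole at $\infty$. Two points require care. First, $\K_c$ is non-polar (otherwise the statement is vacuous); this is forced by the existence of $g_c$: if $\K_c$ were polar, $g_c$ --- being continuous, hence bounded, near the compact set $\K_c$ --- would extend harmonically across $\K_c$, and being $\ge 0$ and equal to $0$ on $\K_c$ it would be identically $0$ by the minimum principle, contradicting $g_c(z)\to+\infty$ as $z\to\infty$. Second, one invokes uniqueness of the Green's function: $g_c|_\Omega$ is harmonic on $\Omega\setminus\{\infty\}$, strictly positive there, tends to $0$ at every boundary point, and by Theorem~\ref{green}(3) has the logarithmic singularity $g_c(z)=\log|z|+\sigma+o(1)$ at $\infty$ with $\sigma=\log|a_{d_1}|\sum_{n\ge 0}\frac{(-1)^n}{d_nd_{n+1}}$. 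These properties characterise $g_\Omega(\cdot,\infty)$, so the two functions agree. The constant $\sigma$ appearing there is the Robin constant of $\K_c$, and the classical relation between the Robin constant and the logarithmic capacity gives $\mathrm{cap}(\K_c)=e^{\sigma}$.

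The \emph{Fullness} part is soft, a direct application of the maximum principle. The main obstacle is the \emph{Capacity} step, and specifically the verification that $g_c$ is genuinely the Green's function with pole at infinity --- not merely a function exhibiting the right logarithmic singularity --- which is exactly where the non-polarity of $\K_c$ and the uniqueness theorem for Green's functions are needed; once this is granted, the value of the capacity is read off directly from the asymptotics already established in Theorem~\ref{green}(3).
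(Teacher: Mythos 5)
Your proof follows the same route as the paper's. Fullness is argued exactly as in the paper --- the maximum principle applied to $g_c$ on a bounded complementary component, whose boundary lies in $\K_c=g_c^{-1}(\{0\})$ --- and the capacity is read off from the asymptotics $g_c(z)=\log|z|+\sigma+o(1)$ of Theorem~\ref{green}. Where you go beyond the paper is the capacity step: the paper simply says ``the logarithmic capacity is defined as $e^{\sigma}$ where $\sigma$ is the constant in $g(z)=\log|z|+\sigma+o(1)$'', i.e.\ it implicitly identifies $g_c$ with the Green's function of the unbounded complementary domain, whereas you actually justify that identification (non-polarity of $\K_c$ via removability of polar sets and the minimum principle, then uniqueness of the Green's function with pole at infinity). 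Both of those verifications are correct, and they genuinely fill in a step the paper leaves implicit.

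One caveat concerns your very last sentence. With the standard normalization of logarithmic capacity (the capacity of a closed disk of radius $R$ is $R$), the Robin constant $\gamma=\lim_{z\to\infty}\bigl(g_{\Omega}(z,\infty)-\log|z|\bigr)$ satisfies $\mathrm{cap}(\K_c)=e^{-\gamma}$; so the ``classical relation'' you invoke yields $e^{-\sigma}$, not $e^{\sigma}$. A test case: $f(z)=az$ and $c=0$, where $\K_0$ is the closed disk of radius $|a|^{-1/\rho}$, while $\sigma=\log|a|\sum_{n\geq 0}\frac{(-1)^n}{F_nF_{n+1}}=\rho^{-1}\log|a|$, so the capacity is $e^{-\sigma}$. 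The paper arrives at $e^{\sigma}$ only because it takes that expression as its definition of capacity; your conclusion therefore matches the statement as printed, but as a deduction from the classical Robin-constant relation it carries a sign slip inherited from the paper. Apart from this convention issue, your argument is sound and is essentially the paper's, executed with more care.
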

\begin{proof} The proof is word to word the same as for polynomial dynamical systems. Indeed, assume there exists $\mathcal O$ a bounded connected component of $\C\setminus\K_c$. Then by the maximum principle
$$
\max_{z\in \mathcal O} g(z)=\max_{z\in \partial\mathcal O} g(z)=0
$$
as $\partial\mathcal O\subset \K_c$. This provides a contradiction since $\mathcal O$ is not included in $\K_c$.\\
The logarithmic capacity is defined as $e^\sigma$ where $\sigma$ is the constant in
$$
g(z)=\log(|z|)+\sigma+o(1),
$$
and according to Theorem~\ref{green} we obtain $\sigma=\log |a_{d_1}| \sum_{n=0}^{+\infty} \frac{(-1)^n}{d_{n}d_{n+1}}.$

\end{proof}
An other consequence of Theorem~\ref{green} is the following. Let $Comp^*(\C)$ denotes
the set of non empty compact Hausdorff subsets of
$\C$.
\begin{prop}
The function $c\mapsto J_c$ from $\C$ to $Comp^*(\C)$ is lower semi-continuous.
\end{prop}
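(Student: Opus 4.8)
The plan is to show lower semi-continuity of $c \mapsto J_c = \partial \K_c$ with respect to the Hausdorff metric. Recall that $c \mapsto J_c$ is lower semi-continuous at $c_0$ means: for every open set $U$ with $U \cap J_{c_0} \neq \emptyset$, there is a neighborhood $V$ of $c_0$ such that $U \cap J_c \neq \emptyset$ for all $c \in V$. Equivalently, for every $z_0 \in J_{c_0}$ and every $\varepsilon > 0$, there is $\eta > 0$ such that $|c - c_0| < \eta$ implies $J_c \cap \D(z_0, \varepsilon) \neq \emptyset$.

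First I would fix $z_0 \in J_{c_0}$ and $\varepsilon > 0$ and consider the open disk $\D(z_0, \varepsilon)$. Because $J_{c_0} = \partial \K_{c_0}$, this disk meets both $\K_{c_0}$ and its complement; in particular there is a point $a \in \D(z_0, \varepsilon)$ with $a \notin \K_{c_0}$, so $g_{c_0}(a) > 0$ by point (2) of Theorem~\ref{green}, and there is a point $b \in \D(z_0,\varepsilon)\cap\K_{c_0}$, so $g_{c_0}(b) = 0$. By the joint continuity of $(c,z) \mapsto g_c(z)$ (point (4) of Theorem~\ref{green}), there is $\eta > 0$ such that for all $c$ with $|c - c_0| < \eta$ we still have $g_c(a) > 0$, i.e. $a \notin \K_c$. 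Now the key dichotomy: if for such a $c$ we also have $g_c(b) = 0$, i.e. $b \in \K_c$, then $\D(z_0,\varepsilon)$ meets both $\K_c$ and $\C \setminus \K_c$, hence it meets $\partial \K_c = J_c$ and we are done. Otherwise $b \notin \K_c$ as well; but then I would argue that $\D(z_0,\varepsilon)$ still has to meet $J_c$ by a connectedness/topological argument — this is where I would need to work a little harder.

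The cleaner route around the potential obstacle is to not insist on using $b$ but instead to exploit that $\K_c$ is a nonempty compact set which is \emph{full} (its complement is connected, by the Proposition preceding this one, or equivalently by the maximum principle argument via $g_c$). Here is the idea: consider a path $\gamma$ in $\C$ from the point $a$ (which lies outside $\K_c$ for all $c$ near $c_0$, and which we may take close to $z_0$) to some fixed point $q$ deep inside $\K_{c_0}$, chosen so that by continuity of $g$, $q \in \K_c$ for all $c$ near $c_0$ as well — such a $q$ exists because $\K_{c_0}$ has nonempty interior is not guaranteed, so instead one picks $q$ to be any point with $g_{c_0}(q) < \delta$ for $\delta$ small and uses that $\K_c \subset \overline{\D(0,R)}$ with $R$ uniform for $c$ in a compact set; actually the simplest fixed reference point is to note $0 \in \K_c$ fails in general, so I would instead use that $g_c$ is continuous to find that near $z_0$ there are points in $\K_c$ directly. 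Concretely: since $z_0 \in \partial\K_{c_0}$, pick $b_n \to z_0$ with $b_n \in \K_{c_0}$; these are limit points, and one shows using harmonicity of $g_c$ off $\K_c$ together with Hurwitz-type stability that $\K_c$ cannot suddenly avoid an entire neighborhood of $z_0$ — equivalently, I would show $g_{c}$ restricted to $\D(z_0,\varepsilon)$ must vanish somewhere for $c$ near $c_0$, because otherwise $g_{c}$ would be a positive harmonic function on $\D(z_0,\varepsilon)$ converging (as $c \to c_0$) uniformly on the slightly smaller disk $\overline{\D(z_0,\varepsilon/2)}$ to $g_{c_0}$, which attains the value $0$ at $z_0$, contradicting the minimum principle for the positive harmonic limit unless $g_{c_0} \equiv 0$ there, which is false since $a$ is in this disk with $g_{c_0}(a) > 0$.

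The main obstacle is precisely this last step: ruling out the scenario where, as $c$ moves, the whole piece of $\K_c$ near $z_0$ "disappears" so that $\D(z_0,\varepsilon)$ lies entirely outside $\K_c$. Once one has the uniform convergence $g_c \to g_{c_0}$ on compact subsets (from point (4)) and the fact that $g_c \geq 0$ everywhere with $g_{c_0}(z_0) = 0$ but $g_{c_0} \not\equiv 0$ near $z_0$, the minimum principle for harmonic functions delivers the contradiction: a harmonic function on the disk that is everywhere $\geq 0$ and hits $0$ at an interior point must be identically $0$, so $g_c$ cannot be strictly positive on all of $\D(z_0,\varepsilon)$ without its limit $g_{c_0}$ being $0$ on $\D(z_0,\varepsilon/2)$. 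Hence for $c$ close to $c_0$, $g_c$ vanishes somewhere in $\D(z_0,\varepsilon)$ while also being positive somewhere there (at $a$), so the disk meets $J_c = \partial g_c^{-1}(\{0\})$. This proves $J_c \cap \D(z_0,\varepsilon) \neq \emptyset$ for all $c$ near $c_0$, which is exactly lower semi-continuity.
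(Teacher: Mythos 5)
Your argument is correct, but it takes a genuinely different route from the paper. The paper's proof is the short potential-theoretic one: continuity of $c\mapsto g_c$ (in $L^1$, from Theorem~\ref{green}) gives continuity of $c\mapsto \Delta g_c$ in the sense of distributions; the support of a weakly continuous family of positive measures varies lower semi-continuously; and $\mathrm{supp}\,\Delta g_c=J_c$ because $g_c$ is harmonic on $\C\setminus\K_c$ and vanishes on the interior of $\K_c$. You instead work directly with $g_c$: for $z_0\in J_{c_0}$ you use the joint continuity (point (4) of Theorem~\ref{green}) to keep a nearby point $a\notin\K_c$ for all $c$ close to $c_0$, and you exclude the scenario $\D(z_0,\varepsilon)\subset\C\setminus\K_c$ by noting that $g_c$ would then be positive and harmonic on the disk, so the locally uniform limit $g_{c_0}$ would be harmonic, nonnegative, and vanish at the interior point $z_0$, hence identically zero near $z_0$ by the minimum principle --- contradicting $g_{c_0}(a)>0$ (equivalently, contradicting $z_0\in\partial\K_{c_0}$). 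Your route is more elementary and self-contained (no distributions, no equilibrium measure), using only points (1), (2), (4) of Theorem~\ref{green} plus the minimum principle; the paper's route is shorter and yields the extra structural fact that $J_c$ is the support of the measure $\Delta g_c$. A few small repairs to your write-up: choose $a$ inside $\D(z_0,\varepsilon/2)$ (or observe that $g_{c_0}\equiv 0$ on a neighborhood of $z_0$ already contradicts $z_0\in\partial\K_{c_0}$), so the contradiction occurs in the disk on which you take the limit; state explicitly that the locally uniform convergence $g_c\to g_{c_0}$ follows from joint continuity on a compact product set; and delete the exploratory middle paragraph about the path to a reference point $q$, which is a false start that your final argument never uses.
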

\begin{proof}
The standard proof works in our setting. Indeed, the map $c\mapsto g_c$ is continuous in $L^1$ according to Theorem~\ref{green}, and $c\mapsto \Delta g_c$ is
also continuous in the sense of distributions.  Hence the support of the measure $\Delta g_c$ is lower semi-continuous
with respect to $c$. Moreover, $g_c$ is harmonic in $\C\setminus \K_c$ and $g_c=0$ on the interior of $\K_c$, thus the support of
$\Delta g_c$ coincides with $J_c$.
\end{proof}

We can also define the analogue of the so-called B\"ottcher coordinates in a neighborhood of infinity by the formula
\begin{equation}\label{bottcher}
\varphi(w,z)=z\prod_{k=0}^{+\infty} \left(\frac{h_{k+1}(w,z)^{\frac{1}{\rho^{k+1}}}}{h_k(w,z)^{\frac{1}{\rho^{k}}}}\right).
\end{equation}
\begin{prop}\label{Sibony}
There exists a real number $R>1$ such that the function $\varphi$ is well defined for all $|w|, |z|\geq R$, and is an analytic function of $(w,z)$ that satisfies the functional equation
$$
\varphi\circ H_c(w,z)=\varphi^{\rho}(w,z).
$$
Moreover, $G(w,z)=\lambda_0\log|\varphi(w,z)|$.
\end{prop}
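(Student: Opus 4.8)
The plan is to base everything on a single identity obtained by telescoping the product in (\ref{bottcher}), namely $\log\varphi(w,z)=\lim_{N\to\infty}\rho^{-N}\log h_N(w,z)$. First I would fix $R>1$ large enough that Lemma~\ref{controlhn} and Proposition~\ref{normale2} apply and, in addition, $|c|/R^2\le\tfrac12$. On $\Omega_R:=\{(w,z):|w|\ge R,\ |z|\ge R\}$, Lemma~\ref{controlhn} gives $|h_k(w,z)|\ge R>1$ for every $k$, so no factor of (\ref{bottcher}) vanishes and $\logp|h_k|=\log|h_k|$ there. To give the fractional powers a meaning I set $h_k^{1/\rho^{k}}:=\exp(\rho^{-k}\log h_k)$, where $\log h_0:=\log z$ and $\log h_1:=\log w$ for fixed branches on slit planes, and $\log h_{k+1}:=\log h_k+\log h_{k-1}+\mathrm{Log}\!\left(1+\frac{c}{h_kh_{k-1}}\right)$ inductively, the principal branch $\mathrm{Log}$ being legitimate since $|c/(h_kh_{k-1})|\le|c|/R^2\le\tfrac12$. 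Changing the branches of $\log z,\log w$ only multiplies $\varphi$ by a unimodular constant, so $|\varphi|$ is in any case well defined on all of $\Omega_R$.

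The next step is convergence and analyticity. Writing the $k$-th factor of (\ref{bottcher}) as $\exp(\rho^{-(k+1)}b_k)$ with $b_k:=\log h_{k+1}-\rho\log h_k$, the recursion above together with $\rho-1=\rho^{-1}$ gives $b_k=-\rho^{-1}b_{k-1}+\mathrm{Log}(1+c/(h_kh_{k-1}))$; hence $|b_k|\le\rho^{-1}|b_{k-1}|+2|c|R^{-2}$ and, by induction, $|b_k|\le\rho^{-k}|b_0|+C$, where $|b_0|=|\log w-\rho\log z|=O(\log|w|+\log|z|)$ (the bound on $\mathrm{Re}\,b_k$ here is precisely estimate (\ref{normale3})). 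Therefore $\sum_{k\ge0}\rho^{-(k+1)}|b_k|$ converges locally uniformly on $\Omega_R$, so $\log\varphi=\log z+\sum_{k\ge0}\rho^{-(k+1)}b_k$ is a locally uniform limit of holomorphic functions and $\varphi$ is analytic. Since the partial sums telescope to $\rho^{-N}\log h_N-\log z$, we get the compact formula $\log\varphi(w,z)=\lim_{N\to\infty}\rho^{-N}\log h_N(w,z)$.

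With this formula the last two assertions are immediate. From $H_c^{n}(w,z)=(h_{n+1}(w,z),h_n(w,z))$ and $H_c^{n+1}=H_c^n\circ H_c$ one reads off $h_n(H_c(w,z))=h_{n+1}(w,z)$; for $|w|,|z|$ large one has $H_c(w,z)=(wz+c,w)\in\Omega_R$, so $\log\varphi(H_c(w,z))=\lim_N\rho^{-N}\log h_{N+1}(w,z)=\rho\lim_N\rho^{-(N+1)}\log h_{N+1}(w,z)=\rho\log\varphi(w,z)$, i.e.\ $\varphi\circ H_c=\varphi^{\rho}$. And since $\logp|h_N|=\log|h_N|$ on $\Omega_R$, taking real parts in the compact formula gives $\log|\varphi(w,z)|=\lim_N\rho^{-N}\log|h_N(w,z)|$; comparing with $G(w,z)=\lim_N d_N^{-1}\logp|h_N(w,z)|$ and $d_N=\lambda_0\rho^N+\lambda_1(-\rho)^{-N}\sim\lambda_0\rho^N$ yields $\lambda_0\,G(w,z)=\log|\varphi(w,z)|$ (this also recovers $G\circ H_c=\rho G$ from $\varphi\circ H_c=\varphi^\rho$).

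The only genuinely delicate point is the branch bookkeeping: one must check that the inductive definition of $\log h_k$ is compatible with the factors as written in (\ref{bottcher}), and that the argument of $h_{k+1}/h_k^{\rho}$ — not merely its modulus, which Proposition~\ref{normale2} controls — stays bounded; this is exactly the content of the recursion $b_k=-\rho^{-1}b_{k-1}+\mathrm{Log}(1+c/(h_kh_{k-1}))$. I should note that, strictly speaking, $\varphi$ is single valued only on simply connected subdomains of $\Omega_R$: since $h_k\sim w^{F_{k-1}}z^{F_{k-2}}$, one gets $\varphi(w,z)\sim w^{\lambda_0/\rho}z^{\lambda_0/\rho^2}$ at infinity, so $\varphi$ has nontrivial monodromy around $\{w=\infty\}$ and $\{z=\infty\}$; but $|\varphi|$, the functional equation, and the identity relating $G$ and $\log|\varphi|$ are unaffected by this.
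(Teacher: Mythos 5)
Your argument is correct and in fact more careful than the paper's own proof, and it follows a genuinely different organization. The paper argues in two separate steps: it obtains the functional equation formally by the index shift $h_k\circ H_c=h_{k+1}$ inside the product~(\ref{bottcher}), and it proves convergence by bounding only the \emph{modulus} of each factor via estimate~(\ref{normale3}) from Proposition~\ref{normale2}. You instead telescope the logarithm of the partial products to get the compact formula $\log\varphi=\lim_N\rho^{-N}\log h_N$, and control the full complex quantity $b_k=\log h_{k+1}-\rho\log h_k$ through the recursion $b_k=-\rho^{-1}b_{k-1}+\mathrm{Log}\bigl(1+c/(h_kh_{k-1})\bigr)$, which is legitimate on your domain by Lemma~\ref{controlhn}. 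This buys two things the paper does not supply: (i) control of the \emph{arguments} of the factors, which is genuinely needed for convergence of a complex infinite product and is missing from the paper's modulus-only estimate (your recursion is in effect a complexified version of~(\ref{normale3})); and (ii) an honest treatment of the branch/monodromy issue, which the paper ignores entirely although the fractional powers $h_k^{1/\rho^k}$ and $\varphi^\rho$ require it. Both the functional equation and the relation with $G$ then fall out of the same compact formula, whereas the paper's written proof never actually addresses the ``Moreover'' clause at all.

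One point you should state explicitly rather than leave implicit: your (correct) computation gives $\lambda_0\,G(w,z)=\log|\varphi(w,z)|$, i.e.\ $G=\lambda_0^{-1}\log|\varphi|$, which is \emph{not} the identity $G=\lambda_0\log|\varphi|$ asserted in the statement. A check at $c=0$, where $h_N(w,z)=w^{F_{N-1}}z^{F_{N-2}}$ and $d_N=F_N\sim\lambda_0\rho^N$, confirms your version; the constant in the statement appears to be inverted. Since your write-up presents the computation as establishing the ``Moreover'' clause as stated, you should flag this discrepancy (or restate the clause as $\log|\varphi|=\lambda_0 G$), and similarly note that the functional equation, like $\varphi$ itself, is only asserted up to the unimodular constants coming from the choice of branches, exactly as in your monodromy remark.
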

\begin{proof}
Assume $\varphi$ is given by (\ref{bottcher}) and that the product converges. We compute
$$
\varphi\circ H_c(w,z)=w\prod_{k=0}^{+\infty} \left(\frac{h_{k+1}(\phi(w,z))^{\frac{1}{\rho^{k+1}}}}{h_k(\phi(w,z))^{\frac{1}{\rho^{k}}}}\right)
=z^\rho \frac{w}{z^\rho}\prod_{k=0}^{+\infty} \left(\frac{h_{k+2}(w,z))^{\frac{1}{\rho^{k+1}}}}{h_{k+1}((w,z))^{\frac{1}{\rho^{k}}}}\right).
$$
Recall $h_0(w,z)=z$ and $h_1(w,z)=w$, thus we deduce the functional equation
$$
\varphi\circ H_c(w,z)= z^\rho\prod_{k=0}^{+\infty} \left(\frac{h_{k+1}(w,z)^{\frac{1}{\rho^{k+1}}}}{h_k(w,z)^{\frac{1}{\rho^{k}}}}\right)^\rho=\varphi^{\rho}(w,z).
$$
Concerning the convergence of the product (\ref{bottcher}), it should be noticed that for all $|z|, |w|\geq R$, $|h_k(w,z)|\geq R$. On the other hand,
\begin{eqnarray*}
\left|\frac{h_{k+1}(w,z)^{\frac{1}{\rho^{k+1}}}}{h_k(w,z)^{\frac{1}{\rho^{k}}}}\right| &= &\exp \left(\frac{1}{\rho^{k+1}}
\left|\logp|h_{k+1}(w,z)|-\rho \logp|h_{k}(w,z)|\right| \right)\\
&\leq & \exp\left(\frac{C(\log|z|+\log|w|+1)}{\rho^{k+1}}\right)
\end{eqnarray*}
where $C \in \mathbb{R}$ and this last estimate results from~(\ref{normale3}). Thus we deduce that the product (\ref{bottcher}) converges to an analytic function.
\end{proof}

Of course, for our purpose Proposition \ref{Sibony} is mostly interesting  when applied to $w=f(z)$.
\section{Properties of the connectedness locus of CFS}

\begin{prop}
\label{discon1}
Assume that $f_1(z)= f(z)$ satisfies $f(0)=f'(0)=0$, then for all complex numbers $c$ such that $|c| >2$,
we have that $\K_c$ is a disconnected set.
\end{prop}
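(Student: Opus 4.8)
The plan is to exploit the hypothesis $f(0)=f'(0)=0$, which forces $f_1(z)=z^2 g(z)$ for some polynomial $g$, hence each $f_n$ vanishes to high order at $0$; in particular $0\in\K_c$, so $\K_c\neq\emptyset$. The strategy to show disconnectedness is the classical one: produce a point $z_0\in\partial\K_c$ where the Green's function $g_c$ of Theorem~\ref{green} has a critical point, then show that the corresponding equipotential through $z_0$ passes through $0$ and separates $\K_c$ into at least two pieces. Concretely, since $g_c$ is harmonic on $\C\setminus\K_c$ with a logarithmic pole at infinity and $g_c^{-1}(\{0\})=\K_c$, if $\K_c$ were connected then $\C\setminus\K_c$ would be a topological annulus and $g_c$ would have no critical points in $\C\setminus\K_c$; so it suffices to exhibit one.

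First I would observe that $0$ is a common zero of all the $f_n$: since $f_0(0)=0$, $f_1(0)=0$, the recursion $f_n=f_{n-1}f_{n-2}+c$ gives $f_n(0)=c$ for $n\geq 2$, so in fact $0\in\K_c$ trivially (the orbit is eventually constant at $c$, then $c^2+c,\dots$) — wait, this requires $|c|\le 2$ to stay bounded, which fails here. So instead the correct observation is more delicate: with $f(0)=f'(0)=0$ one has $f_1,f_0$ both with a zero at $0$, but $f_n(0)=c$ for $n\ge 2$, and for $|c|>2$ the sequence $f_2(0)=c$, $f_3(0)=c^2+c$, $f_4(0)=c(c^2+c)+c=c^3+c^2+c$, \dots grows, so $0\notin\K_c$. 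The point to extract is rather that $f_n$ has a zero of growing multiplicity near $0$ before the recursion ``kicks in'': one should track, for small $|z|$, that $f_2(z)=f_1(z)f_0(z)+c=z^3g(z)+c$ stays close to $c$, and iterate. The key estimate I would establish: for $|c|>2$ there is a small disk $D(0,r)$ on which $|f_2|$ is so close to $|c|>2$ that, invoking Lemma~\ref{twice} (note $|f_2|,|f_3|$ both exceed the threshold $M$ once $|c|$ is large enough, after checking $|f_3(z)|\ge |f_2(z)||f_1(z)|-|c|$ is not immediately large — so one instead compares $f_3$ and $f_4$, or uses that two consecutive large iterates appear), the orbit of every $z\in D(0,r)$ escapes. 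Hence $D(0,r)\cap\K_c=\emptyset$, i.e. $\K_c$ has a ``hole'' around $0$, while $\K_c$ is nonempty (it contains points far out? no — it is compact and nonempty by the Corollary). Combined with $\K_c$ being full (the Proposition after Theorem~\ref{green}: complement has no bounded component), a hole around an interior-type point is the contradiction: more precisely, I would argue that $g_c$ restricted to the boundary circle $\partial D(0,r)$ attains a positive minimum, and compare with its boundary behavior to locate a critical point of $g_c$ in the bounded region $D(0,r)\setminus\K_c$, contradicting connectedness of $\K_c$ exactly as in the polynomial case.

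The main obstacle, and the step I expect to require the most care, is the quantitative claim that $|c|>2$ already forces an escaping neighborhood of $0$ whose existence obstructs connectedness. For genuine quadratic-like polynomials the bound $|c|>2$ is sharp because $0$ is the critical point and its orbit is exactly $0\mapsto c\mapsto c^2+c\mapsto\cdots$; here the Fibonacci recursion means $f_n(0)=c$ only for $n=2$, and the subsequent values $f_n(0)$ follow a different (faster, product-type) recursion, so I would need to verify that the escape-radius machinery of Proposition~\ref{FilledJulia} and Lemma~\ref{twice}, applied at the base point $0$, indeed certifies escape when $|c|>2$; this amounts to checking $|f_2(0)|=|c|>2$ together with $|f_3(0)|=|c^2+c|=|c||c+1|$ — and if $c+1$ can be small this is the delicate case, handled by instead pushing one more step to $f_4$ and using that two consecutive iterates eventually exceed $\max\{2,\sqrt{2|c|}\}$. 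Once escape at $0$ is secured, openness of the escaping set propagates it to a neighborhood $D(0,r)$, and the topological conclusion (disconnectedness of $\K_c$) follows from the Green's function argument above, word for word as in the classical Fatou--Julia theory, using Theorem~\ref{green}(1)--(2) and the fullness of $\K_c$.
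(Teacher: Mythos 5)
The core gap is in your topological step. From escape of $0$ you get (at best) a disk $D(0,r)\subset\C\setminus\K_c$, but a ``hole'' around $0$ is in no way incompatible with connectedness: a full compact connected set can have any disk disjoint from it inside its (connected, unbounded) complement, so no contradiction arises. Your proposed mechanism for producing a critical point of $g_c$ --- that $g_c$ attains a positive minimum on $\partial D(0,r)$ and that this ``locates'' a critical point in $D(0,r)$ --- does not work: $g_c$ is just a positive harmonic function on $D(0,r)$, and such a function need not have any critical point (indeed, if $\K_c$ were connected, $g_c=\log|\Phi|$ with $\Phi$ conformal has none anywhere). The telltale sign is that your argument never uses the hypothesis $f'(0)=0$, which the paper explicitly flags as crucial in the remark following the proposition. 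The paper's proof uses it as follows: $f(0)=f'(0)=0$ implies $f_n'(0)=0$ for all $n\geq 1$, so $0$ is a critical point of every $f_n$; once $0$ escapes, for large $n$ the set $f_n^{-1}(D(0,R))\supset\K_c$ omits this critical point, and the Riemann--Hurwitz formula (each component is simply connected by the maximum principle, and a proper degree-$k$ map onto the disk must contain $k-1$ critical points counted with multiplicity) together with the nesting of Proposition~\ref{encaixante} forces disconnectedness. If you insist on your Green's-function route, the missing step can be supplied by showing that $0$ itself is a critical point of $g_c$: for $n$ large, $g_n=\frac{1}{d_n}\log|f_n|$ is harmonic near $0$ with $\partial_z g_n(0)=f_n'(0)/(2d_n f_n(0))=0$, and the uniform convergence coming from Proposition~\ref{normale} passes this to the limit; but this argument is not in your proposal.

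Secondary issues: your orbit computation at $0$ has index slips ($f_3(0)=f_2(0)f_1(0)+c=c$, not $c^2+c$; it is $f_4(0)$ that equals $c^2+c$), and the escape of $0$ for $|c|>2$ is left as ``to verify.'' It is in fact immediate from Lemma~\ref{twice}: $|f_2(0)|=|f_3(0)|=|c|$ and $|c|>\max\{2,\sqrt{2|c|}\}$ exactly when $|c|>2$, so two consecutive iterates exceed an admissible threshold $M$; your worry about $|c+1|$ being small is moot since $|c+1|\geq|c|-1>1$. The paper instead proves escape by the explicit induction $|f_n(0)|\geq(|c|-1)|f_{n-1}(0)|$ for $n\geq 4$, which amounts to the same thing.
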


\begin{proof}

Since $f(0)=f'(0)=0$, it is easy to check that $f_{n}'(0)= 0$ for all $n \geq 1$.
Using Riemann-Hurwitz Formula and Proposition \ref{encaixante}, we deduce that if $0 \not \in \K_c$, then $\K_c$ is not connected.

Assume, now that $\vert c \vert >2$, then we have by induction the following claim:

{\bf Claim:} $\vert f_n(0)\vert \geq (\vert c \vert-1)  \vert f_{n-1}(0)\vert,\; \forall n \geq 4$.

Indeed,
$ \vert f_{4}(0)\vert= \vert c^2+c\vert \geq (\vert c \vert-1) \vert f_{3} (0) \vert.$
Suppose that the claim is true for all integer  $4 \leq k \leq n$,
since $\vert c \vert >2$ and $f_3(0)= c$, we deduce that $\vert f_k (0)\vert \geq \vert c \vert$ for all $3\leq k\leq n$.
Hence
$$\vert f_{n+1}(0)\vert \geq  \vert f_{n}(0) f_{n-1}(0)\vert - \vert c \vert \geq (\vert c \vert-1)  \vert f_{n-1}(0)\vert.$$
Then, we obtain the claim.

\end{proof}

\vspace{ 0.5 em}

{\bf Question.}
\begin{enumerate}
\item The condition  that $0$ is a zero of multiplicity two is crucial in our proof. One may ask if this condition
 can be relaxed.

 \item In our proof we have proved that $K_c$ is disconnected for $|c|>2$. One may ask what is the minimal
 constant $d$ such that  for all $\vert c \vert > d,\; \K_c$ is still disconnected.
\end{enumerate}

\begin{prop}
\label{discon}
Assume that $f_1(z)= z$ , then for all real numbers $c <-2, \; \K_c$ is a disconnected set.
\end{prop}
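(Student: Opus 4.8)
The plan is to mimic the classical argument for the quadratic family $z\mapsto z^2+c$ with $c<-2$, adapted to the Fibonacci setting, using the structure already developed. Recall that when $f(z)=z$, we have $f_0(z)=z$, $f_1(z)=z$, $f_2(z)=z^2+c$, and more generally $f_n$ is a polynomial of degree $d_n=F_n$ (the Fibonacci numbers). The key point is to locate the critical points of the $f_n$: since $f_n' = f_{n-1}'f_{n-2}+f_{n-1}f_{n-2}'$, the critical set of $f_n$ is built from the critical sets of $f_{n-1},f_{n-2}$ together with the common zeros of $f_{n-1}$ and $f_{n-2}$. The first genuinely interesting critical point is $z=0$, which is the unique critical point of $f_2(z)=z^2+c$. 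So the strategy is: show that for $c<-2$ the critical orbit, i.e.\ the sequence $\bigl(f_n(0)\bigr)_n$ — or rather the relevant sub-family of critical values — escapes to infinity, i.e.\ $0\notin\K_c$, and then invoke a Riemann--Hurwitz argument exactly as in Proposition~\ref{discon1} to conclude that $\K_c$ is disconnected.

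First I would compute the initial critical-orbit data: $f_0(0)=0$, $f_1(0)=0$, $f_2(0)=c$, $f_3(0)=f_2(0)f_1(0)+c=c$, $f_4(0)=f_3(0)f_2(0)+c=c^2+c$, $f_5(0)=f_4(0)f_3(0)+c=c^3+c^2+c$, and so on. For $c<-2$ one has $|c^2+c|=|c|(|c|-1)>|c|$, so $|f_4(0)|>|f_3(0)|=|c|>2$. Then, exactly as in the Claim inside the proof of Proposition~\ref{discon1}, an induction gives $|f_{n+1}(0)|\ge |f_n(0)f_{n-1}(0)|-|c|\ge(|c|-1)|f_{n-1}(0)|$ for all $n\ge 4$, once one knows $|f_k(0)|\ge|c|$ for $3\le k\le n$; since $|c|-1>1$ this forces $|f_n(0)|\to\infty$, hence $0\notin\K_c$. (In fact the argument of Proposition~\ref{discon1} applies verbatim here because $f(z)=z$ trivially satisfies $f(0)=0$; the only thing one must check separately is $f'_n(0)=0$ for the relevant $n$, or more precisely that $0$ is a critical point of $f_2$ of multiplicity making the Riemann--Hurwitz count work.)

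Next I would run the Riemann--Hurwitz / connectivity argument. By Proposition~\ref{encaixante} we may write $\K_c=\bigcap_{n\ge1}f_n^{-1}(D(0,R))$ with the sets $U_n:=f_n^{-1}(D(0,R))$ nested and decreasing, each $U_n$ simply connected (by the maximum principle, as noted after that proposition). The map $f_n\colon U_n\to D(0,R)$ is a proper branched cover of degree $d_n=F_n$; if it had no critical points in $U_n$ it would be a covering of a disk, hence a homeomorphism, forcing $d_n=1$, impossible for $n\ge3$. So $f_n$ has critical points in $U_n$, and one uses the following standard fact: if $f_n$ is proper of degree $\ge2$ on the simply connected $U_n$ and \emph{some} critical value lies outside $D(0,R)$ (equivalently, some critical point of $f_n$ escapes, here $0\notin\K_c$ so for $n$ large $f_n(0)\notin D(0,R)$), then $U_{n+1}=f_{n+1}^{-1}(D(0,R))$ — or an appropriate preimage disk at the right level — is disconnected, and this disconnection passes to the intersection $\K_c$. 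The cleanest route is to argue as in the polynomial case: since $0\notin\K_c$ pick $n_0$ with $f_{n_0}(0)\notin\overline{D(0,R)}$; then $0$ is a critical point of $f_{n_0}$ (because $0$ is a critical point of $f_2$ and $f'_2(0)=0$ propagates, giving $f'_{n_0}(0)=0$ as in Proposition~\ref{discon1}) whose critical value escapes, so the disk-covering $f_{n_0}^{-1}(\overline{D(0,R)})$ splits into at least two components separated along the appropriate level set, whence $\K_c\subset f_{n_0}^{-1}(\overline{D(0,R)})$ is disconnected.

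The main obstacle, and the place where care is genuinely needed, is the Riemann--Hurwitz bookkeeping for the \emph{composite} structure $f_n=f_{n-1}f_{n-2}+c$: unlike the quadratic family there are many critical points (coming from zeros shared by consecutive $f_k$'s and from critical points of the factors), so one cannot simply say "the unique critical point escapes." What saves the argument is that one only needs \emph{one} escaping critical point together with the fact that all the $U_n$ are simply connected; the escaping critical point $z=0$ provides a branch point whose fiber structure forces a disconnection of a sublevel set, and nestedness ($U_{n+1}\subset U_n$) transfers this to $\K_c$. I would therefore spend the bulk of the write-up making precise the statement "a proper branched self-cover of a disk with an escaping critical value has disconnected filled preimage," and checking that $f'_n(0)=0$ for $n\ge2$ (immediate induction from $f_2'(0)=0$ and the product rule, since $f_1(0)=0$ forces the $f_{n-1}f_{n-2}'$ term to vanish too), which is exactly the hypothesis that made Proposition~\ref{discon1} work.
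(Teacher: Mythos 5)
There is a genuine gap, and it sits exactly where you flagged that "care is genuinely needed." Your argument hinges on the claim that $f_n'(0)=0$ for all $n\geq 2$, asserted by "immediate induction from $f_2'(0)=0$ and the product rule." This is false when $f_1(z)=z$: from $f_n'=f_{n-1}'f_{n-2}+f_{n-1}f_{n-2}'$ one gets $f_3'(0)=f_2'(0)f_1(0)+f_2(0)f_1'(0)=c\neq 0$, and then $f_4'(0)=c^2\neq 0$, etc. The induction used in Proposition~\ref{discon1} needs \emph{both} $f(0)=0$ and $f'(0)=0$ (it is the vanishing of $f_1'(0)$ that kills the term $f_2(0)f_1'(0)$), and the paper explicitly remarks after Proposition~\ref{discon1} that the double zero at $0$ is crucial. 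With $f_1(z)=z$, the point $0$ is a critical point of $f_2$ only, and its critical value $f_2(0)=c$ generally lies well inside $\overline{D(0,R)}$ (the escape radius $R$ is large compared to $|c|$), while for $n\geq 3$ the escaping orbit of $0$ is not an orbit of critical values at all. So the Riemann--Hurwitz mechanism you invoke has no escaping critical point to work with; to salvage it you would have to locate some other critical point of $f_n$ (coming from common zeros of $f_{n-1},f_{n-2}$ or from their own critical sets) that escapes, which is precisely the nontrivial work the proposal does not do. (A secondary issue, which the paper itself glosses over in Proposition~\ref{discon1}, is that disconnectedness of some $f_{n_0}^{-1}(\overline{D(0,R)})$ only yields disconnectedness of $\K_c$ if $\K_c$ meets at least two of its components; with no total invariance of $\K_c$ under $f_{n_0}$ this also needs an argument.)

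For comparison, the paper's proof of Proposition~\ref{discon} avoids critical points entirely and is a direct separation argument: $(-1,-1)$ is a $3$-periodic point of $H_c$, so $-1\in\K_c$; the fixed point $z_0=\frac{1+\sqrt{1-4c}}{2}>0$ also lies in $\K_c$; and no point $ix$ of the imaginary axis is in $\K_c$, because $|f_2(ix)|=x^2+|c|$ and $|f_3(ix)|=\sqrt{c^2+x^2(c-x^2)^2}\geq |c|$ are two consecutive values exceeding the threshold of Lemma~\ref{twice} when $c<-2$. Thus $\K_c$ is split by the line $i\R$ into two nonempty pieces. If you want to keep a critical-orbit strategy, you would need to redo the critical-set analysis for the products $f_{n-1}f_{n-2}+c$ from scratch rather than import Proposition~\ref{discon1}.
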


\begin{proof}

Since $H_c^{3} (-1,-1)= (-1,-1)$, then $-1 $ belongs to $\K_c$.
On the other hand $z_0= \frac{1+ \sqrt{1-4c}}{2}> 0$ belongs to  $\K_c$ since $(z_0, z_0)$ is a fixed point of $H_c$.

{\bf Claim: The line $i \mathbb{R}$ doesn't intersect $\K_c$.}

Indeed: let $x \in \mathbb{R}$ and $c < -2$, then $f_2 (ix)= -x^2+c < c <-2$ and $\vert f_3(x) \vert = \sqrt{x^2 (-x^2+c)^2 + c^2} > \vert c \vert $,
then by Lemma \ref{twice}, we deduce $ix \not \in \K_c$.
Then we obtain the claim.

\end{proof}

The  connectedness locus of CFS $(f_{n})_{n \in \N}$ is the set $$
\M = \left\{c \in \C~:~ \rm {\K_{c}~is~connected}\right\}.
$$
Another important set is the set
$\M_0$ defined by
$$\M_0 = \left\{c \in \C~:~\right (f_{n}(0))_{n \in \N} {\rm {~~~is~~bounded}}\}.$$

It is clear that under the assumption $f_1(0)=f_1'(0)=0$ the set  $\M$ is a subset of $\M_0.$

\begin{prop}
\label{boundedM}
Assume that  $f_1(0)=f_1'(0)=0$, then
the set $\M_0$ satisfies the following properties
\begin{enumerate}
\item
$\ds D(0,\frac{1}{4}) \subset \M_0 \subset D(0, 2)$.
  \item $\M_0$ is a compact and simply connected set.
\end{enumerate}
\end{prop}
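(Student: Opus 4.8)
\textbf{Part (1): the inclusions.} The plan is to iterate the recursion starting from the initial data $f_0(0)=0$, $f_1(0)=0$ (using $f_1(0)=0$), hence $f_2(0)=c$, $f_3(0)=c$, $f_4(0)=c^2+c$, and in general the orbit $(f_n(0))_n$ depends only on $c$. For the outer inclusion $\M_0\subset D(0,2)$, I would argue by contradiction: if $|c|>2$, the Claim established in the proof of Proposition~\ref{discon1} gives $|f_n(0)|\ge(|c|-1)|f_{n-1}(0)|$ for all $n\ge4$, and since $|c|-1>1$ this forces $|f_n(0)|\to\infty$, so $c\notin\M_0$. For the inner inclusion $D(0,\tfrac14)\subset\M_0$, I would show by induction that if $|c|\le\tfrac14$ then $|f_n(0)|\le\tfrac12$ for all $n$: indeed $|f_2(0)|=|c|\le\tfrac14<\tfrac12$, and if $|f_{n-1}(0)|,|f_{n-2}(0)|\le\tfrac12$ then $|f_n(0)|\le|f_{n-1}(0)||f_{n-2}(0)|+|c|\le\tfrac14+\tfrac14=\tfrac12$; this bound is uniform in $n$, so the orbit is bounded and $c\in\M_0$.

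\textbf{Part (2): compactness.} Compactness is essentially the same escape-radius mechanism as in Section~2, now in the parameter $c$. Using the Claim from Proposition~\ref{discon1} one gets that $\{|c|\le 2\}$ already contains $\M_0$, so $\M_0$ is bounded; for closedness one notes that for each fixed $n$ the map $c\mapsto f_n(0)$ is a polynomial in $c$ (hence continuous), so $\M_0=\bigcap_{n}\{c:|f_n(0)|\le R\}$ for a suitable escape radius $R=R(2)$ is an intersection of closed sets, hence closed. Thus $\M_0$ is compact.

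\textbf{Part (2): simple connectedness.} Here the plan is to mimic the one–variable proof that the Mandelbrot set is connected, via a Green's function / Böttcher coordinate in the parameter plane. Define $G_{\M_0}(c):=g_c\big(\text{something evaluated along the critical orbit}\big)$; concretely, since $0$ is a common critical point of all $f_n$, the natural candidate is
$$
G_{\M_0}(c):=\lim_{n\to\infty}\frac{1}{d_n}\log_+|f_n(0)|,
$$
which by the estimates of Proposition~\ref{normale} and Theorem~\ref{green} converges locally uniformly in $c$ off $\M_0$, is harmonic (in fact pluri-sub/super-harmonic, but here just harmonic) on $\C\setminus\M_0$, vanishes exactly on $\M_0$, and behaves like $\log|c|+O(1)$ near $\infty$ (read off the leading term $c^{F_{n-1}}$ of $f_n(0)$ in $c$, exactly as the leading-coefficient computation in the proof of Theorem~\ref{green}). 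Given such a $G_{\M_0}$, the complement $\C\setminus\M_0$ carries no bounded component (maximum principle, as in the proof following Theorem~\ref{green}), and since $\M_0$ is compact this forces $\C\setminus\M_0$ to be connected, i.e. $\M_0$ is simply connected. I expect the main obstacle to be the convergence and harmonicity of this parameter Green's function: one must check that the argument $f_n(0)$ behaves along the orbit the way $f_n(z)$ does for fixed $z$, which requires re-running Proposition~\ref{normale} with $z$ replaced by the moving point $0$ under the varying dynamics $f_n=f_n^{(c)}$ — the key point being that all the estimates in Lemma~\ref{majoration} and Proposition~\ref{normale} are uniform in the evaluation point, so they pass to this setting, and that the subharmonicity needed for the maximum-principle step survives. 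An alternative, more elementary route for simple connectedness, avoiding the parameter Green's function, is to exhibit an explicit analytic isomorphism (Böttcher-type) from $\C\setminus\overline{\D}$ onto $\C\setminus\M_0$ built from $\varphi$ of Proposition~\ref{Sibony} evaluated along $w=z=0$-type data; but the Green's function argument is cleaner and reuses the machinery already developed.
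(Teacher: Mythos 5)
Your part (1) and your compactness argument are essentially the paper's own: the inner inclusion is the paper's claim $D(0,\frac{A-1}{A^2})\subset \M_0$ specialized to $A=2$ (your invariant bound $\tfrac12$ is exactly their $rA$), the outer inclusion is read off the Claim in the proof of Proposition~\ref{discon1}, and compactness uses the same two facts you use, namely an escape radius $R$ that can be chosen independently of $c$ on the bounded parameter region and the representation $\M_0=\bigcap_n\{c:\ |\phi_n(c)|\leq R\}$ with $\phi_n(c):=f_n(0)$ polynomial in $c$. Where you genuinely diverge is simple connectedness. The paper gets it directly from that same representation: each $\phi_n^{-1}(\overline{D(0,R)})$ is a full compact set by the maximum principle applied to the polynomial $\phi_n$ (the very argument used earlier to show $\C\setminus\K_c$ is connected), so $\C\setminus\M_0=\bigcup_n\{|\phi_n|>R\}$ is a union of connected open sets all containing a common neighborhood of infinity, hence connected; no Green's function is needed. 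Your route — the parameter Green's function $G_{\M_0}(c)=g_c(0)=\lim_n \frac{1}{d_n}\logp|\phi_n(c)|$ plus the maximum principle — is viable and mirrors the classical Mandelbrot-set argument, but it is heavier: it hinges on exactly the point you flag, that the constants in Lemma~\ref{majoration} and Proposition~\ref{normale} can be chosen locally uniformly in $c$ so that the convergence is locally uniform and harmonicity off $\M_0$ survives (the paper's Theorem~\ref{green}(4) only records continuity in $(c,z)$). Also your stated asymptotics are off: the leading $c$-degree of $\phi_n$ is a Fibonacci number $F_{n-2}$ while the normalization $d_n$ is the $z$-degree with $d_1=\deg f\geq 2$, so $G_{\M_0}(c)=\alpha\log|c|+O(1)$ with $\alpha=\lim F_{n-2}/d_n<1$ in general, not $\log|c|+O(1)$ — harmless for the maximum-principle step, which only needs harmonicity, strict positivity off $\M_0$ and vanishing on $\M_0$, but worth correcting. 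What your construction buys beyond the statement is potential-theoretic information about $\M_0$ (e.g.\ its capacity), whereas the paper's polynomial-preimage argument reaches the purely topological conclusion with tools already in hand; note finally that, exactly like the paper, your outer argument really yields $\M_0\subset\overline{D(0,2)}$, and the strict inclusion in the open disk would require a separate word about $|c|=2$.
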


\begin{proof}
By Proposition (\ref{discon1}), we have $\M_0 \subset D(0, 2)$. Now, let $A >1$, then we have the following claim

{\bf Claim}:  $D(0,\frac{A-1}{A^2}) \subset \M_0.$

Indeed, put $r=\frac{A-1}{A^2}$ and  assume that $\vert f_{1}(0) \vert = \vert c \vert <  r < rA$,
then
$\vert f_{2}(0) \vert  \leq r^2+ \vert c \vert < r^2 A^2 +r= rA $.
We deduce easily by induction on $n$ that
$$\vert f_{n}(0) \vert  < rA,\; \forall n \in \mathbb{N}.$$
Hence, we obtain the claim.

Since $\max \{\frac{A-1}{A^2},\; A >1\}= \frac{1}{4}$, we have $D(0,\frac{1}{4}) \subset  \M_0$.

On the other since $\M_0 \subset D(0,2)$, there exists a constant  $R$ (which does not depend on $c$) such that
 if $|f_{k}(0)| > R$, for some integer $k$, then the sequence $(f_n(0))_{n \geq 0}$ is unbounded.
 Putting $f_{n}(0)= \phi_{n}(c)$, we have
 $\M_0= \{c \in \mathbb{C}, \; \vert \phi_{n}(c) \vert \leq R,\; \forall n \in \mathbb{N}\}$.
 Hence  $\M_0$ is a compact and simply connected set.
\end{proof}

%\begin{figure}[htb]
%\begin{center}
%\includegraphics[width=10cm]{Connexite.jpg}
%\caption{Connected locus}
%\label{connected-locus}
%\end{center}
%\end{figure}

%\section{Some pictures}\label{applications}
%\section{Some pictures}\label{applications}

%\begin{figure}[htb]
%\begin{center}
%\includegraphics[width=10cm]{}
%\caption{Example of quasi-disk $K_c \cap \Delta$, for $c=0.1+0.2i$}
%\label{connected-locus}
%\end{center}
%\end{figure}

\section{Main cardioid: preliminary results}
Here we focus on a subset of the parameter space that generalizes the main cardioid of the Mandelbrot set.
We aim to study the dynamics of CFS with small values of $c$.

For technical reasons we will assume that $f(z)=f_1(z)=z$. Under this restriction the filled-in Julia set of the CFS
corresponds to $\K_c=K^{+}_c \cap \Delta$ where $\Delta$ is the diagonal of $\C^2$ and $K^{+}_c= \{(x, y) \in \C^2,\; H_c^{n}(x, y) \mbox { is  bounded}\}.$ \\
%\begin{thm}
%Assume  that $f$ satisfy $f(D(0,1/2))\Subset D(0,1/2)$ then for all $|c|<1/4$, there exist $k>1$ such that
%$\K$ is a $k$-quasi-disk.
%\end{thm}
%
%
%\begin{proof}
%The fundamental remark is that whenever $|f_{n-1}(x)|$ and $|f_{n}(x)|$ are $< \frac12$  then
%$$
%|f_{n+1}(x)|\leq |f_{n-1}(x)||f_{n}(x)|+|c|<\frac14+\frac14=\frac12.
%$$\end{proof}

%%%%%%%%%%%%%%%%%%%%%%%%%%%%%%%%%%%%%%%%%%%%%%%%%%%%
The goal of the four last sections is the following result:
\begin{thm}
There exists $a>0$ such that for all $0 \leq \vert c \vert <a$, the Julia set $\K_c= K^{+}_c \cap \Delta$ is a quasi-disk, where $\Delta$ is the diagonal $\{ (z,z) \mid z \in \mathbb{C} \}$.
Moreover $c\mapsto \K_c$ is continuous for the Hausdorff topology.
\end{thm}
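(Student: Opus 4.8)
The plan is to treat $c$ as a perturbation parameter and to build a holomorphic motion of the Julia set $\K_0$ over a small disk $D(0,a)$ in the $c$-plane, then invoke the $\lambda$-lemma (Mañé--Sad--Sullivan / Słodkowski) to conclude that $\K_c$ is the image of $\K_0=\partial\D$ under a quasi-conformal homeomorphism, hence a quasi-disk. The starting observation is that for $c=0$ the recursion gives $f_n(z)=z^{F_n}$ (with $F_0=F_1=1$, $d_n=F_n$ in the notation of the excerpt), so $\K_0=\{z:\sup_n|z|^{F_n}<\infty\}=\overline{\D}$ and $J_0=\partial\D$. The Böttcher-type coordinate $\varphi$ of Proposition~\ref{Sibony}, applied at $w=f(z)=z$, conjugates the exterior dynamics to the model $\zeta\mapsto\zeta^\rho$ on $\{|\zeta|>1\}$, and for $c=0$ one has $\varphi(z,z)=z$; this is the anchor for the motion on the complement.

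First I would construct, for each $z_0\in J_0=\partial\D$, a holomorphic-in-$c$ point $h_c(z_0)\in J_c$ with $h_0(z_0)=z_0$, together with $h_c(z_0)=z_0$ on the two anchor points where the dynamics is already rigid (e.g. the fixed point $z_0=\tfrac{1+\sqrt{1-4c}}{2}$ of $H_c$ on the diagonal, which lies in $\partial\K_c$ for small $c$, and other visible periodic points such as the $3$-periodic orbit through $-1$ when it is hyperbolic). The natural device is the stable/repelling structure: for $|c|$ small, $H_c$ is a small perturbation of $H_0(x,y)=(xy,x)$, and on the "outside" the family $(f_n)$ behaves like $z^{F_n}$ up to the bounded multiplicative error controlled by Lemma~\ref{majoration} and Proposition~\ref{normale}. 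Concretely I would (i) show the Green function $g_c$ is real-analytic and $>0$ off $\K_c$, with $\nabla g_c\neq0$ there (from the product formula for $\varphi$ and $G(w,z)=\lambda_0\log|\varphi(w,z)|$), so that each level set $\{g_c=\eta\}$ near $\partial\K_c$ is an analytic Jordan curve moving holomorphically in $c$; (ii) use the functional equation $\varphi\circ H_c=\varphi^\rho$ to pull these curves inward, and pass to the limit $\eta\to0$ to get a candidate parametrization of $J_c$. Alternatively, and I think more cleanly, I would verify directly that $\Phi_c:=\varphi(\cdot,\cdot)|_\Delta$ extends to a holomorphic motion of $\overline{\C}\setminus\D$ parametrized by $c\in D(0,a)$: it is injective for each fixed $c$ (univalence of $\varphi$ near $\infty$, propagated inward by the functional equation since $\rho>1$ forces expansion), holomorphic in $c$, and equals the identity at $c=0$; Słodkowski's theorem then extends it to a holomorphic motion of all of $\overline{\C}$, whose restriction to $\partial\D$ moves $J_0$ onto $J_c$ and whose slices are global quasi-conformal homeomorphisms of $\overline{\C}$. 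The image of the round circle under a global q.c. map is a quasi-circle, so $\K_c$, being the bounded complementary component (it is full by the Proposition after Theorem~\ref{green}), is a quasi-disk.

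The step I expect to be the main obstacle is proving that the motion $\Phi_c$ is genuinely \emph{injective} (a true motion, not merely a holomorphic family of maps) down to the boundary $\partial\D$, i.e. that $J_c$ really is a Jordan curve and that $\Phi_c$ does not develop collisions as $\eta\to0$. Because the graphs $\{(f(z),z)\}$ are not $H_c$-invariant — a point stressed in the introduction — one cannot simply quote the Hénon-map theory; the argument must instead bootstrap from the uniform estimates of Section~2--3. I would handle this by a telescoping/contraction argument: writing $f_n(z)=z^{F_n}\prod_{k}(1+\varepsilon_{k,n}(z,c))$ with $\sum_k\sup|\varepsilon_{k,n}|$ uniformly small for $|c|<a$ (which follows from Lemma~\ref{majoration} with an $A=A(c)\to0$ as $c\to0$, refining the proof there), one gets that the "escaping-rate" coordinate varies by a definite amount across $\partial\K_c$, giving a uniform modulus bound between consecutive level curves and hence a $K$-q.c. bound independent of $c\in D(0,a)$; injectivity down to the limit then follows by normality. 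Finally, continuity of $c\mapsto\K_c$ in the Hausdorff topology is immediate from the joint continuity of $(c,z)\mapsto g_c(z)$ in Theorem~\ref{green} together with $\K_c=g_c^{-1}(0)$, or directly from continuity of the holomorphic motion. The choice of $a$ is then dictated by: (a) smallness needed for $A(c)$ to be below the threshold of the contraction estimate, and (b) the anchor periodic points remaining hyperbolic and persisting, both of which are open conditions containing $c=0$.
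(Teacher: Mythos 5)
Your strategy has a genuine gap at exactly the point you flag as the main obstacle, and the proposed remedy does not repair it. The mechanism you rely on --- restricting the B\"ottcher coordinate $\varphi$ of Proposition~\ref{Sibony} to the diagonal and ``pulling the level curves inward'' via the functional equation $\varphi\circ H_c=\varphi^{\rho}$ --- is unavailable here, because the diagonal (and more generally the graph $\{(f(z),z)\}$) is \emph{not} invariant under $H_c$: one has $H_c(z,z)=(z^2+c,z)$, which leaves the slice. Hence there is no induced one-variable dynamics on $\Delta$ that the functional equation could conjugate to $\zeta\mapsto\zeta^{\rho}$, and no way to iterate the relation to extend $\varphi(\cdot,\cdot)|_{\Delta}$ from the region $|z|\geq R>1$ (the only place where the product (\ref{bottcher}) is known to converge) down to $J_c$. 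This is precisely the obstruction stressed in the introduction of the paper. Your fallback telescoping argument does not close the gap either: near the unit circle the factors $1+\varepsilon_{k,n}$ have errors of size comparable to $|c|/|f_kf_{k-1}|\approx |c|$ \emph{per step with no decay in $k$}, so $\sum_k\sup|\varepsilon_{k,n}|$ is not small and the product need not converge; the constant $A$ of Lemma~\ref{majoration} does not tend to $0$ with $c$ in all cases (one branch gives $A_2=5\log R$), and the $1/d_n$ normalization that makes the Green's function converge is of no help for the parametrization itself. Consequently you never establish that $J_c$ is a Jordan curve, that the level curves of $g_c$ limit onto $\partial\K_c$ rather than onto some larger set, or that the candidate motion is injective at the boundary --- which is the whole content of the theorem.

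The paper's actual proof works in $\C^2$ rather than on the slice: for $c=0$ the torus $\{|x|=|y|=1\}$ is a hyperbolic invariant set for $H_0$ whose stable lamination meets $\Delta$ transversally along the unit circle; structural stability and Berger's persistence of complex laminations give an invariant torus and a stable lamination depending analytically on $c$, so the transverse intersection points with $\Delta$ define a holomorphic motion of the circle, and the $\lambda$-lemma produces the quasi-circle. Identifying its closed interior with $K_c^+\cap\Delta$ then uses the maximum principle for one inclusion and, for the other, crossed mappings in the sense of Hubbard--Oberste-Vorth together with the shadowing lemma and a connectedness argument in the neighborhood $W$ of the torus. If you want to salvage your approach, you would have to replace the functional-equation step by some such two-dimensional hyperbolicity argument; as written, the proposal proves the statement only in a neighborhood of infinity, not at $\partial\K_c$.
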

For a good introduction to quasi-conformal mappings and its applications to dynamics see~\cite{carlgam} and~\cite{ahl} .

{\bf Idea of the proof.}
When $c=0$ the dynamics is simple to understand: there is an invariant torus $\vert x \vert = \vert y \vert =1$ that cuts the diagonal $\Delta$ into a circle.
Through points of that circle $\mathcal{C}$ are local stable manifolds, transverse to $\Delta$. If one can show that these disks are analytic graphs that move holomorphically with $c$, their intersection points with $\Delta$ will define a holomorphic motion of the circle $\mathcal{C}$, hence define a family of quasi-circles $\mathcal{C}_c$. Then it remains to show that the interior of those quasi-circles are in $K^{+}_c$ (which will be a simple consequence of Liouville's theorem) and that the exteriors are in the escaping sets.

{\bf Invariant torus and adapted coordinates.}
In this section we consider the (monomial) map $H_{0}:\binom{x}{y} \mapsto \binom{xy}{x}$ defined on $\C^2$.

\begin{lemm}
When $c=0$,  the torus $\mathbf{T}_0:=\left \{ \binom{x}{y} \in \mathbb{C}^2  ; \vert x \vert = \vert y \vert =1  \right \}$
is a hyperbolic invariant set. Moreover, around each point $\binom{x_0}{y_0} \in \mathbf{T}_0$ there exists an open set with local branches of $y \mapsto y^{\beta_i}$ such that the two maps
\[
\phi_{1}:\binom{x}{y} \rightarrow \frac{x}{y^{\beta_{1}}} \textrm{ and } \phi_{2}:\binom{x}{y} \rightarrow \frac{x}{y^{\beta_{2}}}
\]
are well-defined and semi-conjugate $H_{0}$ to the map
\[
L: \binom{u}{v} \rightarrow \binom{u^{1-\beta_{1}}}{v^{1-\beta_2}},
\]
where  $\beta_{1}:=\frac{1+\sqrt{5}}{2} \approx 1.61$ and $\beta_{2}:=\frac{1-\sqrt{5}}{2} \approx -0.61 $  are the two eigenvalues of the matrix $M= \bigl(\begin{smallmatrix}
1&1\\ 1&0
\end{smallmatrix} \bigr)$.

Using coordinates $(e^{i \alpha}, e^{i \beta})$ on the torus $\mathbf{T}_0$, the restriction of $H_0$ to $\mathbf{T}_0$ coincides with the linear map $M: \binom{\alpha}{\beta} \rightarrow \binom{\alpha+\beta}{\alpha} $.
\end{lemm}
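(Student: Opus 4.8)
The plan is to reduce the whole statement to the linear algebra of $M$ after passing to logarithmic coordinates, and to treat the four assertions (invariance, hyperbolicity in $\C^2$, existence of the $\phi_i$ with the semi-conjugacy, linearity on the torus) in that order. The easy ones first: invariance of $\mathbf{T}_0$ is immediate from $|xy|=|x|\,|y|$, and writing $x=e^{i\alpha}$, $y=e^{i\beta}$ gives $H_0(e^{i\alpha},e^{i\beta})=(e^{i(\alpha+\beta)},e^{i\alpha})$, i.e. the torus endomorphism induced by $M$; since $\det M=-1$ and $M$ has eigenvalues $\beta_1>1>|\beta_2|$ (the roots of $\lambda^2-\lambda-1$), this is a hyperbolic (Anosov) automorphism of $\mathbf{T}_0\cong\R^2/(2\pi\Z)^2$. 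This already proves the last sentence of the lemma.

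For hyperbolicity of $\mathbf{T}_0$ as a subset of $\C^2$, I would work on a neighbourhood of $\mathbf{T}_0$ inside $(\C^*)^2$ and choose, locally, holomorphic branches $X=\log x$, $Y=\log y$. In these charts $H_0$ becomes $(X,Y)\mapsto(X+Y,X)$, the restriction of the complex-linear map $M$, and the change-of-branch maps are translations by the $M$-invariant lattice $(2\pi i\Z)^2$. Equivalently, the holomorphic coframe $\{dx/x,\,dy/y\}$ and the flat Hermitian metric $|dx/x|^2+|dy/y|^2$ are globally defined near $\mathbf{T}_0$, and $DH_0\equiv M$ in this frame. Since $M$ is real symmetric with eigenvalues $\beta_1,\beta_2$, its complex eigenlines $E^u\perp E^s$ furnish a $DH_0$-invariant, real-analytic splitting of $T_{\mathbf{T}_0}\C^2$ with $\|DH_0|_{E^u}\|=\beta_1>1$ and $\|DH_0|_{E^s}\|=|\beta_2|<1$, uniformly (all the data are constant in these charts and $\mathbf{T}_0$ is compact). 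Hence $\mathbf{T}_0$ is a compact hyperbolic invariant set.

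The key remaining point is that $\phi_1,\phi_2$ are simply the eigen-coordinates of $M$ in these logarithmic coordinates. Concretely: fix $\binom{x_0}{y_0}\in\mathbf{T}_0$, a branch of $\log y$ near $y_0$, and branches of $\log x$ near $x_0$ and near $x_0y_0$; set $U_i:=X-\beta_iY$ and $\phi_i:=\exp(U_i)=x\,y^{-\beta_i}$, so that $\phi_i$ is defined on small polydiscs about $\binom{x_0}{y_0}$ and about $H_0\binom{x_0}{y_0}=\binom{x_0y_0}{x_0}$, with the branches of the powers occurring in $L$ normalized by the value of $U_i$ at the base point. Because $H_0$ acts on $(X,Y)$ by $M$,
\[
U_i\circ H_0=(X+Y)-\beta_iX=(1-\beta_i)X+Y=(1-\beta_i)(X-\beta_iY)=(1-\beta_i)\,U_i,
\]
the middle equality holding since $\beta_i^2=\beta_i+1$, so $\beta_i-\beta_i^2=-1$. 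Exponentiating gives $\phi_i\circ H_0=\phi_i^{\,1-\beta_i}$, that is $\Phi\circ H_0=L\circ\Phi$ with $\Phi=(\phi_1,\phi_2)$ and $L(u,v)=(u^{1-\beta_1},v^{1-\beta_2})$; note also that $\phi_1/\phi_2=y^{\beta_2-\beta_1}=y^{-\sqrt5}$ recovers $y$ and then $x$, so $\Phi$ is in fact a local biholomorphism onto its image.

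The only genuinely delicate step is the branch bookkeeping in this last paragraph: the power maps $y^{\beta_i}$ and $\phi_i^{1-\beta_i}$ are multivalued, and one must verify that the branches can be chosen so that $\Phi\circ H_0=L\circ\Phi$ holds as an identity of honest holomorphic maps on one common neighbourhood — which is exactly why the statement is local (a small open set around each torus point) and why one pins down $\log x$ and $\log y$ at the base point and at its image. Once the branches are fixed consistently, both sides are holomorphic and agree at the base point, so they agree on a connected neighbourhood; everything else is the diagonalization of $M$, and no substantial obstacle remains.
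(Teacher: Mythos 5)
Your proof is correct, and its core is the same computation as the paper's: the identity $\phi_i\circ H_0=\phi_i^{\,1-\beta_i}$, which the paper verifies multiplicatively in one line ($\phi_1\circ H_0=xy/x^{\beta_1}=x^{1-\beta_1}y=(x/y^{\beta_1})^{1-\beta_1}$ via $\beta_i^2=\beta_i+1$), you verify additively as $U_i\circ H_0=(1-\beta_i)U_i$ after setting $X=\log x$, $Y=\log y$, $U_i=X-\beta_iY$. Where you genuinely go beyond the paper is on the other two points: for hyperbolicity the paper only exhibits the real hypersurfaces $\vert x\vert=\vert y\vert^{\beta_i}$ as stable/unstable manifolds and gives no derivative estimate, whereas you observe that in the frame $dx/x$, $dy/y$ the derivative of $H_0$ is the constant matrix $M$, so the complexified eigenlines of $M$ give a uniform invariant splitting over the compact torus -- a complete argument; and your logarithmic charts make the branch bookkeeping for $y^{\beta_i}$ and the powers in $L$ explicit, which the paper silently elides, and they also yield the last assertion (the induced map $\binom{\alpha}{\beta}\mapsto\binom{\alpha+\beta}{\alpha}$ on $\mathbf{T}_0$) for free. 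One small caution: the closing claim that the two sides ``agree at the base point, so they agree on a connected neighbourhood'' is not a valid principle for arbitrary holomorphic functions; it works here only because, once branches are fixed, the two sides differ by the constant factor $\exp\bigl(2\pi i(k_1-\beta_ik_2)\bigr)$ coming from the additive ambiguity of the logarithms, so agreement at one point does force identity -- say this explicitly rather than appealing to holomorphy.
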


\begin{proof}
The stable and unstable manifolds of the invariant torus are 3-dimensional real analytic manifolds given by the equations $\vert x \vert = \vert y \vert ^{\beta_i}$, where the $\beta_i$ are the roots of $X^2-X-1$. The formulas for the semi-conjugacy come from a simple computation:

\[
\phi_{1} \circ H_{0}\binom{x}{y}= \frac{x.y}{x^{\beta_1}}=x^{1-\beta_1}.y=\left( \frac{x}{y^{\beta_1}} \right)^{1-\beta_1}=\left( \phi_{1}\binom{x}{y}\right)^{1-\beta_1}.
\]
\end{proof}

{\bf Dynamics in the non-perturbated case.}

We will need to visualize four-dimensional neighborhoods of the invariant torus. In order to do this we will draw the relevant domains in the plane $(|x|,|y|)$.

By taking absolute values of the functions $\phi_i$, $i=1,2$, we can easily prove:
\begin{lemm}
\label{SR}
Let $R(x,y):=\frac{\vert x \vert}{\vert y \vert^{\beta_1}}$ and $S(x,y):=\frac{\vert x \vert}{\vert y \vert^{\beta_2}}$.
Then one has
\[
R \circ H_{0}=R^{1-\beta_{1}} \textrm{ and } S \circ H_{0}=S^{1-\beta_{2}}.
\]
\end{lemm}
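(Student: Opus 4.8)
The plan is to verify both identities by direct computation, using only the definition $H_0\binom{x}{y}=\binom{xy}{x}$ together with the fact that $\beta_1$ and $\beta_2$ are the roots of $X^2-X-1$, i.e. $\beta_i^2=\beta_i+1$ for $i=1,2$.

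First I would compute the left-hand side. Since $H_0(x,y)=(xy,x)$,
\[
R\circ H_0(x,y)=R(xy,x)=\frac{|xy|}{|x|^{\beta_1}}=|x|^{1-\beta_1}\,|y|.
\]
Then I would expand the right-hand side:
\[
R(x,y)^{1-\beta_1}=\left(\frac{|x|}{|y|^{\beta_1}}\right)^{1-\beta_1}=|x|^{1-\beta_1}\,|y|^{-\beta_1(1-\beta_1)},
\]
so it only remains to match the exponent of $|y|$, namely to check $-\beta_1(1-\beta_1)=1$. This is immediate from $\beta_1^2=\beta_1+1$, since $-\beta_1+\beta_1^2=-\beta_1+\beta_1+1=1$. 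Hence $R\circ H_0=R^{1-\beta_1}$.

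The identity $S\circ H_0=S^{1-\beta_2}$ follows verbatim, replacing $\beta_1$ by $\beta_2$ and using $\beta_2^2=\beta_2+1$. Alternatively, both identities can be obtained in one stroke by taking absolute values in the semi-conjugacy relation $\phi_i\circ H_0=\phi_i^{\,1-\beta_i}$ established in the previous lemma, observing that $R=|\phi_1|$ and $S=|\phi_2|$ on the domains where the relevant branches of $y\mapsto y^{\beta_i}$ are defined, and that $|z^{1-\beta_i}|=|z|^{1-\beta_i}$ because $1-\beta_i\in\mathbb{R}$. There is no genuine obstacle here; the only point worth isolating is the algebraic identity $-\beta_i(1-\beta_i)=1$, which is exactly the statement that $\beta_i$ solves $X^2-X-1=0$.
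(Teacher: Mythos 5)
Your proof is correct, and it is essentially the paper's argument: the paper simply takes absolute values in the semi-conjugacy $\phi_i\circ H_0=\phi_i^{1-\beta_i}$ (which you note as your alternative route), and your direct computation reproduces exactly the same algebra, hinging on the identity $-\beta_i(1-\beta_i)=1$ from $\beta_i^2=\beta_i+1$. Nothing is missing.
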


The dynamics in the plane $(R,S) \in \mathbb{R}^{+} \times \mathbb{R}^{+} $ is given simply by $(R,S) \mapsto (R^{1-\beta_1},S^{1-\beta_2})$. The first figure shows some level curves $(R=cst)$ (looking like graphs $y=\sqrt{x}$) and $(S=cst)$ (looking like hyperbolas $x.y=cst$).

%\begin{figure}[h!]

   %\centering
   % \includegraphics[width=0.5\textwidth]{}
%\end{figure}

\begin{rem}
\label{K0}

Since $H_0^{(n)}(x, y)= (x^{F_{n}} y^{F_{n-1}}, x^{F_{n-1}} y^{F_{n-2}})$ for all integer $n \geq 2$, we deduce that
$$K^{+}_0=  \{(x, y) \in \C^2; \vert x ^{\beta_{1}} y \vert \leq 1\}= \{(x, y) \in \C^2; S(x, y)\leq 1\} .$$

\end{rem}

We recall that $K^{+}_c$ is the set of points of $\C^2$ with bounded forward orbit for the map $H_{c}$, the escape locus is defined as
$U^+_c=\C^2\setminus K^{+}_c$.
We denote by $\mathbb{B}(0,r)= B_r \times B_r$ the {\bf open bidisk} centered at the origin in $\mathbb{C}^2$, with radius $r>0$.

\begin{defn}[Neighborhood in "good position"]
Let $0<a<1$ , $0<b<1$ and $0<r<1$ and let $H_c$ be a fixed map.
Let us consider the neighborhood of the torus $T_{0}:=\left \{ (x, y) \in \C^2 ; \vert x \vert = \vert y \vert =1  \right \}$ defined by
\[
\mathcal{N}_{a,b}=\{(x,y); 1-a \leq R(x,y) \leq 1+a  \textrm{ and } 1-b \leq S(x,y) \leq 1+b\}.
\]
Then we say that $\mathcal{N}_{a,b}$ is {\bf in good position with respect to the open bidisk} $\mathbb{B}(0,1-r)$ if the following condition is satisfied:
\[
H_{c}(\mathcal{N}_{a,b})=\mathcal{M}_{1} \amalg \mathcal{M}_{2} \amalg \mathcal{M}_{3} \textrm{  (disjoint union) }
\]
where:
\begin{enumerate}
\item $\overline{\mathcal{M}_{1}} \subset \mathbb{B}(0,1-r)$,
\item $\mathcal{M}_{2} = \mathcal{N}_{a,b} \cap H_{c}(\mathcal{N}_{a,b})$,
\item $\overline{\mathcal{M}_{3}} \subset U^{+}_c$ where $ U^{+}_c$ is the set of points with unbounded forward orbit.
\end{enumerate}

\end{defn}

\begin{defn}[Neighborhoods $W^{n}_c$]
Let $W^{0}_{c}:=\mathcal{N}_{a,b}$ be a fixed neighborhood (as above) of the torus $T_0$. Let us define inductively
\[
W^{n+1}_{c} :=W^{n}_{c} \cap H_{c}^{-1}(W^{n}_{c}).
\]

\end{defn}

\begin{lemm}[Case $(c=0)$]
For any $W:=W^{0}_{0}=\mathcal{N}_{a,b}$, with %$a,b$%
$a$ in $[0, 0.4]$ the sequence $(W^{n}_{0})_{n \geq 0}$ is decreasing, all the $W^{n}_{0}$ are homeomorphic to $W$, and the intersection
\[
W^{\infty}_{0}:= \bigcap_{n \geq 0} W^{n}_{0}
\]
 is homeomorphic to a direct product $S^1 \times (\textrm{Annulus}) $.

Moreover, $W^{\infty}_{0}$ coincides with $W \cap W^{s}(T_0)$, where $W^{s}(T_0)$ is the stable manifold of the torus $T_0$ invariant under $T_0$.

\end{lemm}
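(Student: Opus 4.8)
The statement becomes transparent once $H_0$ is put in suitable coordinates, and that is the route I would take. On $(\C^*)^2$ I would use $(R,S,\theta,\psi)$, where $R(x,y)=\vert x\vert/\vert y\vert^{\beta_1}$ and $S(x,y)=\vert x\vert/\vert y\vert^{\beta_2}$ are the functions of Lemma~\ref{SR} and $(\theta,\psi)=(\arg x,\arg y)$ runs over $\mathbb{T}^2:=(\R/2\pi\Z)^2$. Since $(\log\vert x\vert,\log\vert y\vert)\mapsto(\log R,\log S)$ is linear with determinant $\beta_1-\beta_2=\sqrt5\neq0$, the map $(x,y)\mapsto(R,S,\theta,\psi)$ is a homeomorphism of $(\C^*)^2$ onto $(\R_{>0})^2\times\mathbb{T}^2$; it carries $W=W^0_0=\mathcal N_{a,b}$ (which lies in $(\C^*)^2$) onto the product box $[1-a,1+a]\times[1-b,1+b]\times\mathbb{T}^2$. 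The crucial point is that in these coordinates $H_0$ is a \emph{product map},
\[
H_0:(R,S,\theta,\psi)\longmapsto\bigl(R^{\beta_2},\,S^{\beta_1},\,\theta+\psi,\,\theta\bigr),
\]
by Lemma~\ref{SR} together with $1-\beta_1=\beta_2$ and $1-\beta_2=\beta_1$ (the $\beta_i$ sum to $1$). The $\mathbb{T}^2$ component is the automorphism induced by $M$, hence a bijection; so $\mathcal N_{a,b}$ and all of its $H_0$-preimages are cylinders over the $(R,S)$-plane.

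First I would compute $W^n_0$. An immediate induction on the defining recursion gives $W^n_0=\bigcap_{k=0}^n H_0^{-k}(\mathcal N_{a,b})$, whence, by the product structure above,
\[
W^n_0=\Bigl(\bigcap_{k=0}^n A_k\Bigr)\times\Bigl(\bigcap_{k=0}^n B_k\Bigr)\times\mathbb{T}^2,
\]
where $A_k=\{R>0:R^{\beta_2^k}\in[1-a,1+a]\}$ and $B_k=\{S>0:S^{\beta_1^k}\in[1-b,1+b]\}$; in logarithmic coordinates $A_k=\beta_2^{-k}\,[\log(1-a),\log(1+a)]$ and $B_k=\beta_1^{-k}\,[\log(1-b),\log(1+b)]$, both compact intervals with $0$ in the interior (for odd $k$ the two endpoints of $A_k$ get swapped, since $\beta_2<0$). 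As $\beta_1>1$ the $B_k$ are nested decreasing, so $\bigcap_{k=0}^nB_k=B_n$, a nondegenerate compact interval whose length tends to $0$. For the $A_k$, observe $A_{k+2}=\vert\beta_2\vert^{-2}A_k$ with $\vert\beta_2\vert^{-2}>1$, hence $A_0\subseteq A_2\subseteq A_4\subseteq\cdots$ and $A_1\subseteq A_3\subseteq\cdots$, so $\bigcap_{k=0}^n A_k=A_0\cap A_1$ for $n\ge1$; and a direct comparison shows $A_0\subseteq A_1$ precisely when $\vert\beta_2\vert\log\tfrac1{1-a}\le\log(1+a)$, an inequality one checks by hand to hold for $a\in[0,0.4]$. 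Therefore $\bigcap_{k=0}^nA_k=A_0=[1-a,1+a]$ for every $n$, and
\[
W^n_0=[1-a,1+a]\times B_n\times\mathbb{T}^2.
\]
The first two claims follow at once: $B_{n+1}\subset B_n$ gives $W^{n+1}_0\subset W^n_0$, and since $[1-a,1+a]$ and each $B_n$ are nondegenerate compact intervals, every $W^n_0$ is homeomorphic to $[0,1]\times[0,1]\times\mathbb{T}^2$, hence to $W$.

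Intersecting over $n$ and using $\bigcap_nB_n=\{S=1\}$ (as $\beta_1^{-n}\to0$), I obtain
\[
W^\infty_0=[1-a,1+a]\times\{S=1\}\times\mathbb{T}^2\;\cong\;[1-a,1+a]\times S^1\times S^1,
\]
which, since $[1-a,1+a]\times S^1$ is a closed annulus, is homeomorphic to $S^1\times(\text{Annulus})$ when $0<a\le0.4$ (for $a=0$ it degenerates to $T_0$ itself). Finally, in these coordinates $\{S=1\}=\{\vert x\vert=\vert y\vert^{\beta_2}\}$ is one of the two $H_0$-invariant real analytic hypersurfaces $\{\vert x\vert=\vert y\vert^{\beta_i}\}$ exhibited earlier, and on it $H_0$ acts by $R\mapsto R^{\beta_2}$ with $\vert\beta_2\vert<1$, so every forward orbit converges to $T_0$; this identifies $\{S=1\}$ with the stable manifold $W^s(T_0)$ (while $\{R=1\}$, on which $S\mapsto S^{\beta_1}$ expands, is the unstable manifold). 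Consequently
\[
W\cap W^s(T_0)=\mathcal N_{a,b}\cap\{S=1\}=\{\,R\in[1-a,1+a],\ S=1\,\}=W^\infty_0,
\]
which is the last assertion.

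The only step that is not purely formal is the endpoint bookkeeping for the $A_k$: one must check that, although the $A_k$ keep expanding and alternating orientation, their nested intersection collapses exactly to $A_0$; it is precisely here that the hypothesis $a\le0.4$ is consumed, since for larger $a$ one gets $\bigcap_kA_k\subsetneq[1-a,1+a]$ and the identification $W^\infty_0=W\cap W^s(T_0)$ breaks down. Granting this, everything else is an immediate consequence of the product form of $H_0$ in the $(R,S,\theta,\psi)$ coordinates.
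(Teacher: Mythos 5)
Your proof is correct and takes essentially the same route as the paper: both reduce the problem to the $(R,S)$-plane via Lemma~\ref{SR}, observe that taking preimages shrinks the $S$-interval toward $\{S=1\}$ while the hypothesis $a\le 0.4$ (equivalent to the paper's inequalities $(1+a)^{-\beta_1}<1-a$ and $1+a<(1-a)^{-\beta_1}$) keeps the $R$-interval equal to $[1-a,1+a]$, and then identify $\{S=1\}$ with the stable manifold of $T_0$. Your explicit four-coordinate product structure and the interval bookkeeping with the $A_k$, $B_k$ merely spell out in more detail the paper's one-step computation of $\mathcal{R}\cap H_0^{-1}(\mathcal{R})$ and its induction.
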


\begin{figure}[htb]
\begin{center}
\includegraphics[width=6cm]{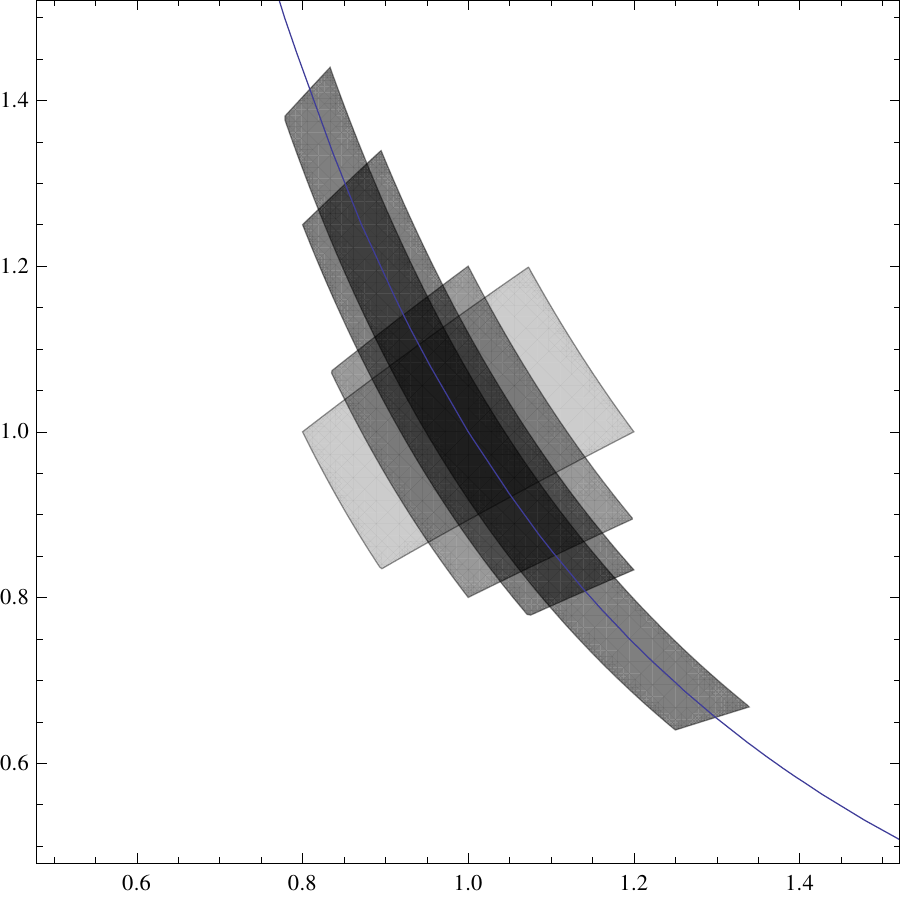}
\caption{Preimages of W and locus S=1}
\label{PreimagesW}
\end{center}
\end{figure}

\begin{proof}
We work in the plane $(R,S)$. Then for any rectangle $\mathcal{R}$ of the form $\mathcal{R}:=\{ 1-a<R<1+a,\textrm{ and } 1-b<S<1+b \}$ (with $0 \leq b \leq 1$ and  $0 \leq a \leq 1$ ) we have $H_{0}^{-1}(\mathcal{R})=\{(1+a)^{-\beta_1} <R<(1- a)^{-\beta_1}, \textrm{ and } (1-b)^{-\beta_2}<S<(1+b)^{-\beta_2} \}$.
%Now for $a$ and $b$ small enough (less than $0.4$ is enough) we have $(1 - b) < (1 - b)^{-\beta_2}$ and $ (1 + b)^{-\beta_2} < 1 + b$, together with $(1 + a)^{-\beta_1} < 1 - a$ and $1 + a < (1 - a)^{-\beta_1}$.%

Now, we have $(1 - b) < (1 - b)^{-\beta_2}$ and $ (1 + b)^{-\beta_2} < 1 + b$.
 For $a$  small enough (less than $0.4$ is enough), we have $(1 + a)^{-\beta_1} < 1 - a$ and $1 + a < (1 - a)^{-\beta_1}$
Thus $\mathcal{R} \cap H_{0}^{-1}(\mathcal{R})=\{1-a <R<1+a, \textrm{ and }(1 - b)^{-\beta_2} <S<  (1 + b)^{-\beta_2} \}$, which shows that the $W_{0}^n$ are all homeomorphic and form a decreasing sequence converging to $\{ 1-a <R<1+a \} \times \{ 1\}$. Now it suffices to remember that the stable manifold of the torus $\{R=1,S=1\}$ corresponds exactly to $\{S=1\}$.
\end{proof}

{\bf Existence of a neighborhood in good position for $c=0$}

First we show that, by taking $c$ small enough we can find symmetric bidisks around the origin, of radius arbitrarily close to $1$,  that are contained entirely inside $K^+_c$.
\begin{lemm}
\label{ep}
For any $0<\delta<1$ there exists $\epsilon>0$ such that for all $0 \leq \vert c \vert <\epsilon$ the bidisk $\mathbb{B}(0,1-\delta)$ is included in $K^{+}_{c}$.
\end{lemm}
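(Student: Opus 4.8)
The plan is to obtain a uniform escape-radius type estimate for $H_c$ that is stable under small perturbations of $c$, and then use it together with an explicit invariant region for $H_0$. First I would recall from Remark~\ref{K0} that $K^+_0 = \{(x,y)\in\C^2 : |x|^{\beta_1}|y|\le 1\} = \{S(x,y)\le 1\}$, so in particular $\overline{\mathbb{B}(0,1-\delta)}\subset K^+_0$ with room to spare: on this bidisk one has $|x|^{\beta_1}|y|\le (1-\delta)^{\beta_1+1} =: \eta < 1$. The strategy is to show that the forward iterates of any point of $\mathbb{B}(0,1-\delta)$ under $H_c$ stay in a fixed compact set, provided $|c|$ is small enough depending only on $\delta$.

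The key step is to produce an explicit $H_c$-invariant compact neighborhood of $\overline{\mathbb{B}(0,1-\delta)}$. Concretely, I would look for constants $\rho_1,\rho_2$ with $1-\delta < \rho_1 < 1 < \rho_2$ and consider the polydisk-type region $\mathcal{D} = \overline{\D(0,\rho_2)}\times\overline{\D(0,\rho_2)}$ intersected, if needed, with a sublevel set of $S$. For $(x,y)\in\mathcal{D}$ one has $H_c(x,y) = (xy+c, x)$, so the second coordinate has modulus $\le \rho_2$ automatically, and the first coordinate satisfies $|xy+c|\le \rho_2^2 + |c|$. Thus if $\rho_2$ is chosen so that $\rho_2^2 < \rho_2$ is impossible — so instead I would track the relevant invariant: since for $c=0$ the quantity $S(x,y)=|x|/|y|^{\beta_2} = |x|\,|y|^{\beta_1}$ (note $\beta_2 = -\beta_1^{-1}$, wait — here $S = |x|\,|y|^{|\beta_2|}$) is the genuinely contracted coordinate, I would estimate how $S\circ H_c$ compares to $(S\circ H_0)$ for small $c$. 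By Lemma~\ref{SR}, $S\circ H_0 = S^{1-\beta_2}$ and $1-\beta_2 = \beta_1 > 1$, so $S$ is strictly contracted below $1$; a perturbation argument (continuity of $H_c$ in $c$, uniform on the compact region) shows that for $|c|$ small, $S\circ H_c < 1$ still holds on $\{S\le \eta'\}$ for some $\eta<\eta'<1$, and that this region maps into itself while staying in a fixed bidisk. Iterating, the forward orbit of every point of $\mathbb{B}(0,1-\delta)$ remains bounded, hence $\mathbb{B}(0,1-\delta)\subset K^+_c$.

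The main obstacle I expect is making the perturbation estimate genuinely uniform: the functions $R$ and $S$ blow up or vanish near the coordinate axes $\{x=0\}$ and $\{y=0\}$, which meet $\mathbb{B}(0,1-\delta)$, so one cannot work directly with $S$ on the whole bidisk. The clean way around this is to avoid $S$ altogether on the first few iterates and argue in two phases: first show that after finitely many iterates (a number depending only on $\delta$) the orbit of any point of $\mathbb{B}(0,1-\delta)$ under $H_c$ lands in a fixed compact region $\mathcal{N}$ that is bounded away from both axes — this uses only the crude bound $|xy+c|\le (1-\delta)^2+|c|<1$ for $|c|<1-(1-\delta)^2$, together with the fact that under $H_0$ orbits converge to the torus and hence eventually enter $\mathcal{N}_{a,b}$-type neighborhoods away from the axes, a property robust under small $c$; then on $\mathcal{N}$ invoke the contraction of $S\circ H_c$ described above. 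Assembling the two phases and choosing $\epsilon>0$ smaller than all the finitely many thresholds produced completes the argument.
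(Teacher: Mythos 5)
There is a genuine gap, and ironically the correct argument is hiding inside your own ``crude bound'': you just use it with the wrong threshold and then replace it with a dynamical picture that is false. You take $|c|<1-(1-\delta)^2$, which only gives $|xy+c|<1$, not $|xy+c|<1-\delta$; so under your choice the bidisk $\mathbb{B}(0,1-\delta)$ is \emph{not} forward invariant, and at the next step the first coordinate may have modulus close to $1$, where the estimate $|xy+c|\le |x|\,|y|+|c|$ no longer keeps the orbit in any fixed region (indeed $(1-\delta)\cdot 1+|c|$ can exceed $1$). To repair this you appeal to ``phase 1'': orbits of points of the bidisk under $H_0$ converge to the torus and hence enter a neighborhood $\mathcal{N}_{a,b}$ away from the axes, robustly in $c$. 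This is false: for a point with $S(x,y)<1$ and $x,y\neq 0$ one has $S\circ H_0^n=S^{\beta_1^n}\to 0$, and since $H_0^n(x,y)=(x^{F_n}y^{F_{n-1}},x^{F_{n-1}}y^{F_{n-2}})$ the orbit tends to the origin, never approaching $\mathbf{T}_0$; so there is no hand-off to your ``phase 2'' contraction of $S$, and that phase has its own unresolved problem (the sublevel set $\{S\le \eta'\}$ is unbounded and degenerates along the axes, so the uniform perturbation estimate you invoke is not available there), which you acknowledge but do not actually fix.

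The lemma needs none of this machinery. Choose $\epsilon\le\delta-\delta^2>0$. Then for $|x|,|y|\le 1-\delta$ and $|c|<\epsilon$ one has $|xy+c|\le(1-\delta)^2+|c|=(1-\delta)-(\delta-\delta^2)+|c|<1-\delta$, while the second coordinate of $H_c(x,y)=(xy+c,x)$ has modulus $\le 1-\delta$ trivially. Hence $H_c$ maps $\mathbb{B}(0,1-\delta)$ into itself, every forward orbit starting there stays in this bidisk, and therefore $\mathbb{B}(0,1-\delta)\subset K^+_c$. This one-line invariance argument is exactly the paper's proof; your write-up contains the right inequality but stops one $\delta$ short of the conclusion and then substitutes an incorrect global-dynamics claim for the missing invariance.
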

\begin{proof}
Since $\vert xy+c \vert \leq \vert xy \vert + \vert c \vert \leq (1-\delta)^2 + \vert c \vert = (1-\delta)+\delta^2 - \delta + \vert c \vert$, it is enough to take $\epsilon < \delta - \delta^2$.
\end{proof}

For the next Lemma, we keep the same notations as above.
\begin{lemm}
\label{GoodPosition}
Given any neighborhood $\mathcal{N}$ of the torus $T_{0}$, there exists $\delta>0$, $\alpha>0$  and $0<a<1$ , $0<b<1$ such that the following conditions are satisfied:
\begin{enumerate}
\item the bidisk $\mathbb{B}(0,1-\delta)$ is included in $K^{+}_c$ for any $c$ such that $\vert c \vert <\alpha$;
\item $\mathcal{N}_{a,b} \subset \mathcal{N}$;
\item the neighborhood $\mathcal{N}_{a,b}$ is {\bf in good position with respect to} the bidisk $\mathbb{B}(0,1-\delta)$, for all maps $H_c$ with $\vert c \vert <\alpha$.

\end{enumerate}
\end{lemm}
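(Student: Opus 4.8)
The plan is to make explicit choices of the parameters $a,b,\delta,\alpha$ by tracking, in the $(R,S)$–plane, how the dynamics of $H_0$ is perturbed when $c$ is turned on, and then to invoke the two previous lemmas for the pieces that fall inside a bidisk or inside the escape locus. First I would fix the target neighborhood $\mathcal N$ and choose $a\in(0,0.4]$, $b\in(0,1)$ small enough that $\mathcal N_{a,b}\subset\mathcal N$; this takes care of condition (2) unconditionally. The point of keeping $a\le 0.4$ is that the computation in the $c=0$ case (the preceding Lemma in the good–position paragraph) then applies: $H_0(\mathcal N_{a,b})$ decomposes cleanly, with the ``$R$ too small'' slab landing deep inside the unit bidisk and the ``$S$ too large'' slab landing in $U^+_0$ (by Remark~\ref{K0}, since $K^+_0=\{S\le 1\}$).

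Next I would run a continuity/robustness argument. The map $(x,y)\mapsto H_c(x,y)$ depends continuously on $c$, uniformly on the compact set $\overline{\mathcal N_{a,b}}$, and the functions $R,S$ are continuous and bounded away from $0$ and $\infty$ on that set. Hence for $c$ small the image $H_c(\mathcal N_{a,b})$ is a small perturbation of $H_0(\mathcal N_{a,b})$ in the $C^0$ sense. Concretely: the ``$R$ small'' piece $\mathcal M_1$ stays inside some fixed bidisk $\mathbb B(0,1-r)$ with $r>0$, because for $c=0$ its closure is compactly contained in such a bidisk, and this is an open condition; so I pick $\delta<r$ and then shrink $\alpha$ so that Lemma~\ref{ep} gives $\mathbb B(0,1-\delta)\subset K^+_c$ for $|c|<\alpha$ — this is condition (1), and it makes $\overline{\mathcal M_1}\subset\mathbb B(0,1-\delta)\subset K^+_c$. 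For the ``$S$ large'' piece $\mathcal M_3$: for $c=0$ one has $\overline{\mathcal M_3}\subset\{S>1\}\subset U^+_0$, and the escape locus $U^+_c$ is open and also ``lower semicontinuous'' in $c$ in the sense that a point escaping for $H_0$ with a definite margin (some iterate has modulus exceeding the escape radius $R(0)$ by a fixed amount) still escapes for all nearby $c$; since $\overline{\mathcal M_3}$ is compact and escapes uniformly, shrinking $\alpha$ further forces $\overline{\mathcal M_3}\subset U^+_c$. Finally $\mathcal M_2:=\mathcal N_{a,b}\cap H_c(\mathcal N_{a,b})$ is the leftover middle piece by definition, and the three pieces are disjoint because their $c=0$ counterparts are disjoint with positive separation in the $(R,S)$ coordinates (an open condition preserved under small perturbation). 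Taking the minimum of the finitely many constraints on $\alpha$ and the chosen $a,b,\delta$ yields condition (3).

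The main obstacle is the last disjointness/decomposition claim: one must be sure that turning on $c$ does not create overlaps between the three pieces or leak mass of $H_c(\mathcal N_{a,b})$ outside $\mathbb B(0,1-\delta)\cup\mathcal N_{a,b}\cup U^+_c$. This is really a compactness argument — every point of $\overline{H_0(\mathcal N_{a,b})}$ lies strictly inside one of the three open regions (bidisk, the open ``collar'' $\{1-a<R<1+a\}\times\{\text{annulus in }S\}$ realized as $\mathcal N_{a,b}\cap H_0(\mathcal N_{a,b})$, or the open escape locus), with a uniform gap, so a uniform $C^0$ bound on $\|H_c-H_0\|$ over $\overline{\mathcal N_{a,b}}$ keeps each point in the same region. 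I would state this as: there is $\eta>0$ such that any map $G$ with $\sup_{\overline{\mathcal N_{a,b}}}\|G-H_0\|<\eta$ satisfies the good–position property, and then observe $\sup_{\overline{\mathcal N_{a,b}}}\|H_c-H_0\|\to 0$ as $c\to 0$. One mild technical care point: $R$ and $S$ involve the multivalued $y^{\beta_i}$, so these estimates should be carried out locally on the torus (using the finitely many charts from the first Lemma of this section) and then patched; since $\overline{\mathcal N_{a,b}}$ is compact this patching is harmless.
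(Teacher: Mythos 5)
Your overall strategy---shrink $a,b$ so that $\mathcal{N}_{a,b}\subset\mathcal{N}$, get condition (1) from Lemma~\ref{ep}, and obtain (3) by perturbing the $c=0$ picture in the $(R,S)$ coordinates---is the same as the paper's. The genuine gap is in how you certify $\overline{\mathcal{M}_3}\subset U^{+}_c$. You appeal to a ``lower semicontinuity'' of the escape locus: the compact piece lies in $U^{+}_0$ and ``escapes uniformly'', hence stays in $U^{+}_c$ for small $c$. But having an unbounded orbit is not an open condition by itself; to run that argument you would need a quantitative escape criterion for the two-dimensional maps $H_c$ (e.g.\ both coordinates larger than $1+\delta$, as in Lemma~\ref{SubsetOfUPlus}), a uniform iterate count on the compact set, and a continuity-of-iterates estimate, none of which you supply --- and the ``escape radius $R(0)$'' you invoke is the radius for the one-variable sequence $(f_n)$, not a criterion for $H_c$ on $\C^2$. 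The paper avoids this entirely with a static inclusion: from $1-a<R<1+a$ and $S>1+b$ alone one gets $R_a\cap\{S>1+b\}\subset\{|x|>1+\delta,\ |y|>1+\delta\}$ for suitable $\delta$, and Lemma~\ref{SubsetOfUPlus} places that region in $U^{+}_c$ uniformly for $|c|<\delta^2$; likewise $R_a\cap\{S<1-b\}\subset\mathbb{B}(0,1-\delta)$. Since this three-slab decomposition of the collar $R_a$ does not depend on $c$, the only statement that needs a perturbation argument is the single inclusion $H_c(\overline{\mathcal{N}_{a,b}})\subset \mathrm{int}(R_a)$, which follows by continuity from the $c=0$ case (Lemma~\ref{SR}, with $a\le 0.4$). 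Replacing your semicontinuity claim by this computation (or proving the uniform-escape statement in full) closes the gap.

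Two smaller inaccuracies: the piece falling into the bidisk is the ``$S<1-b$'' slab, not an ``$R$ too small'' slab (the image stays inside the $R$-collar; the splitting is governed by $S$, cf.\ Remark~\ref{K0}); and it is not true that every point of $H_0(\overline{\mathcal{N}_{a,b}})$ lies strictly inside one of the three open regions with a uniform gap, because $S\circ H_0=S^{\beta_1}$ with $\beta_1>1$, so the image does meet the boundary loci $\{S=1-b\}$ and $\{S=1+b\}$. This does not wreck the argument --- $\mathcal{M}_2$ is by definition the intersection with the closed set $\mathcal{N}_{a,b}$, and only $\mathcal{M}_1$ and $\mathcal{M}_3$ need closures inside $\mathbb{B}(0,1-\delta)$ and $U^{+}_c$ --- but the compactness claim as you state it is false and should be reformulated along the lines above.
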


{\bf Beginning of the proof.}

(1) comes from Lemma \ref{ep}.

For the proof of (2) and (3), for any $0<a<1$, we denote by $R_a$ the set
\[
\left \{ (x, y) \in \C^2, \textrm{ such that } 1-a < R(x,y)<1+a \right \},
\]
%For any $0<b<1$, let us denote by $S_b$ the set
%\[
%\left \{ \binom{x}{y}, \textrm{ such that } 1-b <  S\binom{x}{y}<1+b \right \},
%\] and $R_a$ the corresponding set for the function $R$.
First we observe the following facts:
\begin{enumerate}
\item for any $a \in (0,1)$  small enough (less than $0.4$ is enough), we have $ 1-a <(1 + a)^{\beta_2} $ and
$ (1 - a)^{\beta_2} <1+a $.
By Lemma \ref{SR} and the fact that $\beta_2= 1- \beta_1$, we have
 $H_0(R_a) \subset int(R_a)$,

\item no bidisk $B_{1-\delta} \times B_{1-\delta}$ intersects a locus of the form
\[
\{ S>1+b \},
\]
\item for any $b \in (0,1)$ and $0<a<b/2$  there exists $\delta_0 >0$ such that for any $0< \delta <\delta_0$ we have:

 the bidisk $B_{1-\delta} \times B_{1-\delta}$ centered at the origin contains the locus
\[ R_a \cap
\{ S<1-b \}.
\]

\end{enumerate}
For this last fact, it suffices to choose $a$ and $b$ such that
$$(\frac{1-b}{ 1-a})^{1/ 1+ \beta_1} < 1- \delta,\;  (1+a) (\frac{1-b}{ 1-a})^{\beta_1/ 1+ \beta_1} < 1- \delta.$$

Now we observe that as $a \rightarrow 0$, $b \rightarrow 0$, the neighborhoods $\mathcal{N}_{a,b}$ converge to the torus $\mathbf{T_0}$ (corresponding to $\vert x \vert = \vert y \vert =1$), thus given any $\mathcal{N}$ one can certainly ensure that $\mathcal{N}_{a_0,b_0} \subset \mathcal{N}$ and notice that $\mathcal{N}_{a,b} \subset \mathcal{N}_{a_0,b_0}$ for all $a,b$ satisfying $a \leq a_0$ and $b \leq b_0$.
Now, once $\mathcal{N}_{a,b}$ is chosen ($a \geq a_0,\; b \geq b_0$), we choose a $\delta$ as above.
Let us call $W:=\mathcal{N}_{a,b}$. Then we see immediately
that $H_0(W) \subset int (R_a)$. Also, the following partition of $R_a$ into three parts will induce on $H_0(W)$ a corresponding partition:
\begin{enumerate}
\item First part: $R_a \cap \{ S<1-b \} \subset \{S \leq 1\} = K^{+}_{0}$,
\item Second part: $R_a \cap \{ 1-b \leq S \leq 1+b \}=W$,
\item Third part: $R_a \cap \{ S>1+b  \} \subset U^+_0$.
\end{enumerate}

On the other hand, we can prove that for $\delta$ small, the set $R_a \cap \{ S > 1+b\}$ is contained in the set $\{(x, y), \vert x \vert > 1+ \delta, \vert  y \vert > 1+ \delta\}$.

For this, it suffices to choose $\delta_0>0$  such that for all  $0 < \delta < \delta_0$

$$ \Bigl(\frac{1+b}{1+a}\Bigr)^{1/(\beta_1- \beta_2)} > 1+ \delta  \textrm{ and }
(1-a ) \Bigl( \frac{1+b}{1+a} \Bigr)^{\beta_1/(\beta_1- \beta_2)} > 1+ \delta.$$

We deduce that

\[
H_0(W) \subset int(R_a) \subset  B(0, 1- \delta)  \cup W  \cup \{(x, y), \vert x \vert > 1+ \delta, \vert  y \vert > 1+ \delta\}.
\]

\begin{lemm}
\label{SubsetOfUPlus}
For any $0<\delta<1$ there exists $\epsilon>0$ such that for all $0 \leq \vert c \vert <\epsilon$ the domain $\{ |x|>1+ \delta \textrm{ and } |y|>1+ \delta\}$ is included in $U^{+}_{c}$.
\end{lemm}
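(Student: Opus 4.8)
The plan is to use that, for $H_c(x,y)=(xy+c,x)$ with $|c|$ small, the region $\{|x|>1+\delta,\ |y|>1+\delta\}$ is forward invariant and, better, the orbit of any point in it grows geometrically, so it escapes. Set $\lambda:=1+\delta>1$ and fix $\epsilon$ with $0<\epsilon<\lambda^2-\lambda=\delta(1+\delta)$ (concretely one may take $\epsilon=\tfrac12\delta(1+\delta)$); I claim this $\epsilon$ works for all $|c|<\epsilon$.

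\textbf{Step 1 (the region is forward invariant).} Take any $(x_0,y_0)$ with $|x_0|,|y_0|>\lambda$ and write $(x_n,y_n)=H_c^{n}(x_0,y_0)$. From the definition of $H_c$ one has $y_{n+1}=x_n$, hence $y_n=x_{n-1}$ for $n\ge 1$ and $x_{n+1}=x_nx_{n-1}+c$ for $n\ge1$ — a perturbed multiplicative Fibonacci recursion. An easy induction on $n$ shows $|x_n|>\lambda$ for all $n\ge 0$: the case $n=0$ is the hypothesis; for $n=1$ the triangle inequality gives $|x_1|\ge|x_0||y_0|-|c|>\lambda^2-\epsilon>\lambda$; and for the step, $|x_{n+1}|\ge|x_n||x_{n-1}|-|c|>\lambda^2-\epsilon>\lambda$, where $\lambda^2-\epsilon>\lambda$ is exactly the choice of $\epsilon$. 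In particular $|y_n|=|x_{n-1}|>\lambda$ as well, so the whole orbit stays in the region.

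\textbf{Step 2 (invariance upgraded to escape).} For $n\ge1$, dividing the recursion and using $|x_n|,|x_{n-1}|>\lambda$,
$$
\frac{|x_{n+1}|}{|x_n|}\ \ge\ |x_{n-1}|-\frac{|c|}{|x_n|}\ >\ \lambda-\frac{\epsilon}{\lambda}\ =:\ \mu,
$$
and $\mu>1$ precisely because $\epsilon<\lambda^2-\lambda$. Hence $|x_n|\ge\mu^{\,n-1}|x_1|\to+\infty$, so the forward orbit of $(x_0,y_0)$ is unbounded, i.e. $(x_0,y_0)\in U^{+}_{c}$. Since $(x_0,y_0)$ was an arbitrary point of $\{|x|>1+\delta,\ |y|>1+\delta\}$, that domain is contained in $U^{+}_{c}$ for every $|c|<\epsilon$.

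I do not anticipate a real obstacle: the only thing needing a little care is to exhibit a single $\epsilon$ that simultaneously forces forward invariance ($\lambda^2-\epsilon>\lambda$) and geometric growth ($\lambda-\epsilon/\lambda>1$), but both amount to the same inequality $\epsilon<\delta(1+\delta)$. As an alternative to the ratio estimate one could instead wait until two consecutive iterates exceed $\max\{2,\sqrt{2|c|}\}$ and then invoke Lemma~\ref{twice}, but the direct argument above is shorter and entirely self-contained, paralleling the estimates of Lemma~\ref{ep} and Lemma~\ref{twice}.
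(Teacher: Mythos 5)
Your proof is correct and follows essentially the same route as the paper: a direct triangle-inequality estimate $|xy+c|\ge |x||y|-|c|$ with $\epsilon$ chosen small relative to $\delta$ (the paper takes $\epsilon<\delta^2$, you take $\epsilon<\delta(1+\delta)$) to get forward invariance of the region and divergence of the orbit. Your Step 2, making the escape explicit via the geometric ratio $\mu=\lambda-\epsilon/\lambda>1$, just spells out what the paper's one-line estimate leaves implicit.
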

\begin{proof}
Since $\vert xy+c \vert \geq |xy|-|c| \geq (1+\delta)^2-|c| \geq 1+2\delta + (\delta^2 - |c|)$, it is enough to take $\epsilon <  \delta^2$ to ensure that the orbit diverges to $(\infty, \infty)$.
\end{proof}

{\bf End of the proof of Lemma \ref{GoodPosition}}

{\bf Existence of a neighborhood in good position for $c$ small.}

Let $W= \mathcal{N}_{a,b} $ is as defined in \ref{GoodPosition}.
Since for $c=0$ we know that $H_0(\overline{W}) \subset int( \{ 1-a<R<1+a \})$, the same is true for $H_c$ with $\vert c \vert$ small enough. Thus $H_c(W) \subset  int(\{ 1-a<R<1+a \}) \subset \mathbb{B}(0,1-\delta) \cup W \cup \{ |x|>1 + \delta \textrm{ and } |y|>1+ \delta \} $, and this is enough to ensure that $W$ is in good position with respect to
$\mathbb{B}(0,1-\delta)$ for all $H_c$ with $\vert c \vert$ small enough.

This ends the proof of Lemma 5.8.
\hfill $\Box$

As an immediate consequence of the partition of $H_0(W)$ into three parts we have the following useful Proposition:

\begin{prop}[Dynamics in $W$ when $c=0$]
\label{DynamicsPartitionZero}
There is a partition
\[
W=(W \cap int(K^{+}_0)) \amalg \mathcal{W}^S(W) \amalg (W \cap U^{+}_0),
\]
with the following properties:
\begin{enumerate}
\item $W \cap U^{+}= \bigcup_{n \geq 0} U^{n}_{0}(W)$ where $U^{0}_0=U^{+}$ and  $U^{n+1}_0 := H_{0}^{-1}(U^{n}_0(W) \cap W)$, for all $n \geq 0$.
\item $W \cap K^{+}= \bigcup_{n \geq 0} K^{n}_{0}(W)$ where $K^{0}_0=K^{+}$ and $K^{n+1}_0 := H_{0}^{-1}(K^{n}_0(W) \cap W)$, for all $n \geq 0$.
\item $\mathcal{W}^S(W)$ is defined as $\{ p \in (x,y) \in W \mid H^{(n)}_{0}(p) \in W , \forall n \geq 0 \}$.

\end{enumerate}
\end{prop}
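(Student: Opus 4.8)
The plan is to read the Proposition off from the three–piece decomposition of $H_0(W)$ established while proving Lemma~\ref{GoodPosition} in the case $c=0$, by pulling that decomposition back under $H_0$ and then iterating. Recall that for $c=0$ one has $H_0(W)\subset\mathrm{int}(R_a)$ and a disjoint splitting $H_0(W)=\mathcal M_1\amalg\mathcal M_2\amalg\mathcal M_3$ in which $\mathcal M_1=H_0(W)\cap\{S<1-b\}$ lies in a bidisk $\mathbb{B}(0,1-\delta)$, $\mathcal M_2=H_0(W)\cap W$, and $\mathcal M_3=H_0(W)\cap\{S>1+b\}$ lies in $U^{+}_0$. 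I would first record two structural facts used throughout: on $W=\mathcal N_{a,b}$ the moduli $|x|$ and $|y|$ lie between fixed positive constants (solve for $\log|x|,\log|y|$ from the bounded quantities $\log R$ and $\log S$, using $\beta_1-\beta_2=\sqrt5$), so $W$ is bounded and stays away from the coordinate axes; and the bidisk $\mathbb{B}(0,1-\delta)$ is forward invariant, since $(1-\delta)^2<1-\delta$, hence contained in $K^{+}_0$, so $\overline{\mathcal M_1}\subset K^{+}_0$. Because the second coordinate of $H_0(x,y)=(xy,x)$ returns $x\neq0$ on $W$, the map $H_0|_W$ is injective, hence a homeomorphism onto $H_0(W)$, and the three pieces pull back to an honest partition $W=(W\cap H_0^{-1}\mathcal M_1)\amalg(W\cap H_0^{-1}W)\amalg(W\cap H_0^{-1}\mathcal M_3)$.

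Next I would classify each $p\in W$ by its first exit time $n_0(p):=\min\{n\ge1:H_0^{(n)}(p)\notin W\}$, allowing the value $+\infty$. If $n_0(p)=+\infty$ the whole forward orbit of $p$ stays in the bounded set $W$, so $p\in K^{+}_0$, and by definition $p\in\mathcal W^S(W)$. If $n_0(p)<+\infty$, then $H_0^{(n_0(p)-1)}(p)\in W$ while $H_0^{(n_0(p))}(p)\in H_0(W)\setminus W=\mathcal M_1\amalg\mathcal M_3$ (it cannot be in $\mathcal M_2\subset W$): landing in $\mathcal M_1\subset K^{+}_0$ forces $p\in K^{+}_0$, landing in $\mathcal M_3\subset U^{+}_0$ forces $p\in U^{+}_0$. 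Reading this dichotomy off level by level gives the recursive families of the statement: letting $U^{n}_0(W)$, resp.\ $K^{n}_0(W)$, denote the set of $p\in W$ that remain in $W$ at times $0,\dots,n-1$ and enter $\mathcal M_3$, resp.\ $\mathcal M_1$, at time $n$, one has $U^{n+1}_0(W)=W\cap H_0^{-1}(U^{n}_0(W))$ and $K^{n+1}_0(W)=W\cap H_0^{-1}(K^{n}_0(W))$, together with $W\cap U^{+}_0=\bigsqcup_{n\ge1}U^{n}_0(W)$ and $W\cap K^{+}_0=\mathcal W^S(W)\sqcup\bigsqcup_{n\ge1}K^{n}_0(W)$; this is precisely (1) and (2).

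It then remains to identify $\mathcal W^S(W)$ and extract the ternary partition. From the defining recursion $W^{n+1}_0=W^{n}_0\cap H_0^{-1}(W^{n}_0)$ an immediate induction shows $W^{n}_0=\{p\in W:H_0^{(k)}(p)\in W\text{ for }0\le k\le n\}$, so $\mathcal W^S(W)=\bigcap_{n\ge0}W^{n}_0=W^{\infty}_0$, which by the earlier $c=0$ lemma equals $W\cap W^{s}(T_0)=W\cap\{S=1\}$. Since $K^{+}_0=\{S\le1\}$ by Remark~\ref{K0} and each point of $\{S=1\}$ is a limit of points of $\{S>1\}$, one has $\mathrm{int}(K^{+}_0)=\{S<1\}$ and $U^{+}_0=\{S>1\}$; using that $W$ avoids the axes this yields $W=(W\cap\{S<1\})\amalg(W\cap\{S=1\})\amalg(W\cap\{S>1\})=(W\cap\mathrm{int}\,K^{+}_0)\amalg\mathcal W^S(W)\amalg(W\cap U^{+}_0)$, which is the asserted partition and is consistent with (1)--(2) because $W\cap\mathrm{int}\,K^{+}_0=(W\cap K^{+}_0)\setminus\mathcal W^S(W)$. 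The only genuinely non-formal input is the identification $\mathcal W^S(W)=W^{\infty}_0=W\cap W^{s}(T_0)$ — that the set of points never leaving $W$ is exactly the local stable manifold $\{S=1\}$ of the torus — but this is exactly the content of the previous $c=0$ lemma, so once it is invoked the present Proposition is bookkeeping; the only care still needed is checking that the $\mathcal M_i$ splitting is genuinely disjoint and that leaving $W$ is the sole route to escape, both of which are already in hand from the boundedness of $W$ and the good–position analysis.
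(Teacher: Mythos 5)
Your proof is correct and follows essentially the same route as the paper: its (much terser) proof likewise reads the trichotomy off Lemma~\ref{GoodPosition}, classifying each point of $W$ by whether its first exit from $W$ lands in the component of $H_{0}(W)\setminus W$ lying in $\mathbb{B}(0,1-\delta)\subset int(K^{+}_0)$, the component lying in $U^{+}_0$, or whether it never exits and hence belongs to $\mathcal{W}^S(W)$. Your additional bookkeeping (injectivity of $H_0$ on $W$, the identification $\mathcal{W}^S(W)=W\cap\{S=1\}$ via the $c=0$ lemma and Remark~\ref{K0}) only makes explicit details the paper leaves implicit.
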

\begin{proof}
From Lemma \ref{GoodPosition} we know the fate of points of $W$ under forward iteration:
they eventually fall in $\mathbb{B}(0, 1-\delta) \subset int(K^{+}_0)$ or in $U^{+}_0$, or stay forever in $W$ (and hence, by definition, belong to $\mathcal{W}^S(W)$). From that same Lemma, we know that a point $p \in W$ enters eventually $int(K^{+}_0)$ if and only if one of its forward iterates lands in the connected component of $(H_{0}(W)-W)$ that is inside $\mathbb{B}(0, 1-\delta)$. Similarly  a point $p \in W$ enters eventually $U^{+}_0$ if and only if one of its forward iterates lands in the connected component of $(H_{0}(W)-W)$ that is inside $U^{+}_0$.
\end{proof}

An immediate consequence is:
\begin{prop}
Proposition \ref{DynamicsPartitionZero} is also true for $|c|$ small.
\end{prop}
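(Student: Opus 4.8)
The plan is to observe that the proof of Proposition~\ref{DynamicsPartitionZero} used nothing about $H_0$ beyond the good-position decomposition $H_0(W)=\mathcal{M}_1\amalg\mathcal{M}_2\amalg\mathcal{M}_3$ of Lemma~\ref{GoodPosition} and the two inclusions $\mathbb{B}(0,1-\delta)\subset K^{+}_0$ and $\{|x|>1+\delta,\ |y|>1+\delta\}\subset U^{+}_0$; since Lemmas~\ref{ep}, \ref{SubsetOfUPlus} and~\ref{GoodPosition} provide all three \emph{uniformly} for $|c|<\alpha$, I would simply rerun that argument with $c$ in place of $0$. Concretely, I would fix $a,b,\delta,\alpha$ as in Lemma~\ref{GoodPosition}, set $W:=\mathcal{N}_{a,b}$ (the same neighborhood for all $c$, since $R$ and $S$ do not depend on $c$), take $|c|<\alpha$, and record that good position yields $\mathcal{M}_2=W\cap H_c(W)$, $\overline{\mathcal{M}_1}\subset\mathbb{B}(0,1-\delta)\subset K^{+}_c$ and $\overline{\mathcal{M}_3}\subset U^{+}_c$; in particular $H_c(W)\setminus W=\mathcal{M}_1\amalg\mathcal{M}_3$ is disjoint from $W$.

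Next I would establish the trichotomy for a point $p\in W$ by tracking its forward orbit. If $H_c^{(n)}(p)\in W$ for all $n$ then $p\in\mathcal{W}^S(W)$ by definition. Otherwise, at the first exit time $n+1$ one has $H_c^{(n)}(p)\in W$, hence $H_c^{(n+1)}(p)\in H_c(W)\setminus W=\mathcal{M}_1\amalg\mathcal{M}_3$; landing in $\mathcal{M}_1\subset\mathbb{B}(0,1-\delta)$ forces $p\in\mathrm{int}(K^{+}_c)$ (the forward orbit of $p$ is eventually trapped in the bounded open bidisk, so $(H_c^{(n+1)})^{-1}(\mathbb{B}(0,1-\delta))$ is an open subset of $K^{+}_c$ containing $p$), while landing in $\mathcal{M}_3\subset U^{+}_c$ forces $p\in U^{+}_c$. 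This gives the partition $W=(W\cap\mathrm{int}(K^{+}_c))\amalg\mathcal{W}^S(W)\amalg(W\cap U^{+}_c)$. Finally I would define $K^{n}_c(W)$ (resp.\ $U^{n}_c(W)$) to be the set of points of $W$ with first exit time exactly $n$ and exit point in $\mathcal{M}_1$ (resp.\ $\mathcal{M}_3$), check by a one-line induction that these agree with the recursively defined sets $K^{0}_c=K^{+}_c$, $K^{n+1}_c=H_c^{-1}(K^{n}_c(W)\cap W)$ and $U^{0}_c=U^{+}_c$, $U^{n+1}_c=H_c^{-1}(U^{n}_c(W)\cap W)$, and conclude $W\cap K^{+}_c=\bigcup_{n\geq0}K^{n}_c(W)$, $W\cap U^{+}_c=\bigcup_{n\geq0}U^{n}_c(W)$, and $\mathcal{W}^S(W)=\bigcap_{n\geq0}W^{n}_c$ — exactly the content of Proposition~\ref{DynamicsPartitionZero} for the map $H_c$.

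I do not expect a genuine obstacle: the entire content is the \emph{persistence} of the good-position property, and that was already the substantive work of Lemma~\ref{GoodPosition}, obtained by noting that $H_0(\overline{W})\subset\mathrm{int}(R_a)$ is an open condition stable under $C^0$-small perturbation of the map and then invoking Lemmas~\ref{ep} and~\ref{SubsetOfUPlus} for the bidisk and escaping-region inclusions. The only point I would be careful to spell out is that $H_c(W)\setminus W$ has no component left over besides the piece inside $\mathbb{B}(0,1-\delta)$ and the piece inside $U^{+}_c$ — but this is precisely the definition of being in good position, so it comes for free from Lemma~\ref{GoodPosition}.
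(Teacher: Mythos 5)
Your proposal is correct and matches the paper's intended argument: the paper gives no separate proof precisely because Lemma \ref{GoodPosition} already establishes the good-position decomposition (together with $\mathbb{B}(0,1-\delta)\subset K^{+}_c$ and $\{|x|>1+\delta,\,|y|>1+\delta\}\subset U^{+}_c$) uniformly for all $|c|<\alpha$, so the proof of Proposition \ref{DynamicsPartitionZero} carries over verbatim with $H_c$ in place of $H_0$. Your spelled-out first-exit-time trichotomy is exactly that rerun, with no gap.
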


{\bf The invariant torus $\mathbf{T}_0$ and its stable foliation.}
In this section we set $W:=\mathcal{N}_{a,b}$, where $\mathcal{N}_{a,b}$ is as in Lemma \ref{GoodPosition}.
\begin{lemm}
When $c=0$, the set $\mathcal{W}^S(W)$ defined in \ref{DynamicsPartitionZero} coincides with the set
\[
\{ 1-a< R < 1+a \} \times \{ S=1 \}.
\]
It is also the part of the stable manifold of the invariant torus  $\mathbb{T}_0$ lying in $W$, and is thus a disjoint union of analytic disks, each one being a piece of stable manifold of a point of the torus.
\end{lemm}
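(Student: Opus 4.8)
The plan is to work entirely in the $(R,S)$-coordinates, where the dynamics is the explicit monomial map $(R,S)\mapsto (R^{1-\beta_1}, S^{1-\beta_2})$ of Lemma~\ref{SR}. Recall from Lemma~\ref{GoodPosition} and the proof of the Case $(c=0)$ Lemma that $W=\mathcal{N}_{a,b}=\{1-a\le R\le 1+a\}\cap\{1-b\le S\le 1+b\}$, and that $\mathcal{W}^S(W)=\{p\in W\mid H_0^{(n)}(p)\in W\ \forall n\ge 0\}$. First I would show the inclusion $\mathcal{W}^S(W)\supseteq\{1-a<R<1+a\}\times\{S=1\}$: if $S(p)=1$ then $S(H_0(p))=1^{1-\beta_2}=1$ for all iterates, and since $1-\beta_1=\beta_2<0$, the map $R\mapsto R^{\beta_2}$ on the interval $(1-a,1+a)$ (with $a\le 0.4$) keeps $R$ inside $(1-a,1+a)$ — this is exactly the computation already carried out in the proof of the Case $(c=0)$ Lemma, namely $(1+a)^{-\beta_1}<1-a$ and $1+a<(1-a)^{-\beta_1}$. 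Hence every forward iterate of such a $p$ stays in $W$, so $p\in\mathcal{W}^S(W)$.

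Next I would prove the reverse inclusion. Suppose $p\in W$ with $S(p)=S_0\ne 1$. Since $S\circ H_0 = S^{1-\beta_2}$ and $1-\beta_2=\beta_1>1$, we get $S(H_0^{(n)}(p))=S_0^{\beta_1^n}$. If $S_0>1$ this tends to $+\infty$ and eventually exceeds $1+b$, so some iterate leaves $W$; if $S_0<1$ it tends to $0$ and eventually drops below $1-b$, again leaving $W$. Either way $p\notin\mathcal{W}^S(W)$. This shows $\mathcal{W}^S(W)\subseteq\{S=1\}\cap W = \{1-a\le R\le 1+a\}\times\{S=1\}$, and combining with the first inclusion (noting the boundary $R$-values $1\pm a$ are handled the same way, or simply taking the open set as in the statement) gives the claimed equality.

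For the second assertion, I would invoke the Case $(c=0)$ Lemma directly: there it is stated that $W^{\infty}_0=\bigcap_{n\ge0}W^n_0$ coincides with $W\cap W^s(T_0)$, where $W^s(T_0)$ is the stable manifold of the torus. Since $W^{\infty}_0$ is precisely the set of points whose whole forward orbit stays in $W=W^0_0$, it equals $\mathcal{W}^S(W)$ by definition. The stable manifold $W^s(T_0)$ is the real-analytic $3$-manifold $\{S=1\}$ (equivalently $|x|=|y|^{\beta_2}$), identified in the proof of the first Lemma of this section and in the proof of the Case $(c=0)$ Lemma. The map $(R,S)\mapsto S$ restricted to $\{S=1\}$ realizes a fibration over the torus via $\phi_2$: each fiber is a level set of $\phi_1$ inside $\{S=1\}$, on which $H_0$ acts as $R\mapsto R^{\beta_2}$, i.e.\ as (a branch of) the local stable dynamics of a single torus point. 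Each such fiber, restricted to $\{1-a<R<1+a\}$, is an analytic disk — the local stable manifold of the corresponding point — and these disks are pairwise disjoint, so $\mathcal{W}^S(W)$ is a disjoint union of analytic disks, each a piece of a stable manifold of a torus point. The only mildly delicate point is the bookkeeping with the multivalued branches of $y\mapsto y^{\beta_i}$ and the fact that the global object is a foliated annulus $S^1\times(\text{annulus})$ rather than a product of disks; but this is exactly the topological description already established in the Case $(c=0)$ Lemma, so I would simply cite it rather than redo it. The main (very mild) obstacle is making precise that $\{S=1\}\cap W$ carries the structure of a union of analytic disks coherently across the branch cuts — which follows from the semi-conjugacy to the linear map $L$ in the first Lemma of the section.
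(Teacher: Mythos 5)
Your treatment of the set-theoretic identity is correct and is essentially the paper's own argument: the equality $\mathcal{W}^S(W)=\{1-a<R<1+a\}\times\{S=1\}$ follows from the functional equations $R\circ H_0=R^{\beta_2}$, $S\circ H_0=S^{\beta_1}$ exactly as you do it (the paper compresses this into one sentence), and identifying this set with $W\cap W^s(T_0)$ via the Case $(c=0)$ Lemma is also what the paper intends.

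There is, however, a genuine error in your justification of the last assertion (the decomposition into analytic disks). You describe the pieces as ``level sets of $\phi_1$ inside $\{S=1\}$, fibered over the torus''. This cannot be right: a level set $\{\phi_1=u_0\}$ is a complex curve (real dimension $2$), and its intersection with the real hypersurface $\{S=1\}$ (real dimension $3$) is only $1$-real-dimensional --- explicitly, on $\{x=u_0y^{\beta_1}\}$ the condition $S=1$ forces $|y|$ to equal a fixed constant, so the ``fiber'' is a circle, not an analytic disk, and it is not a stable manifold of a torus point (level sets of $\phi_1$ with $|u_0|=1$ lie in $\{R=1\}$ and are of unstable type). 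Moreover a fibration of the $3$-dimensional set $\{S=1\}\cap W$ by $2$-dimensional disks indexed by the $2$-torus is dimensionally inconsistent: stable disks through torus points lying on the same stable line of the induced map $M$ coincide, so the correct parameter is a circle of values, not the torus. The correct objects are the local level sets of $\phi_2$, i.e.\ local graphs $x=v\,y^{\beta_2}$ with $|v|=1$ (using local branches of $y^{\beta_2}$): one checks $H_0$ sends the leaf with parameter $v$ into the leaf with parameter $v^{\beta_1}$, each such graph lies in $\{S=1\}$, is a complex-analytic disk, and its points converge to the torus. This is exactly the description the paper uses in the following lemma (``graphs of the form $x/y^{\beta_2}=\mathrm{const}$''), and the paper's proof of the present lemma sidesteps the explicit identification altogether by invoking the Stable Manifold Theorem for the hyperbolic torus (Katok--Hasselblatt, Thm.~6.4.9). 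So your overall strategy is fine and the slip is fixable by swapping $\phi_1$ for $\phi_2$ (and replacing ``over the torus'' by ``over the unit circle of $\phi_2$-values''), but as written the step producing the analytic disks fails.
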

\begin{proof}
The first assertions are immediate consequences of the fact that the functions $R$ and $S$ satisfy simple functional equations.
That the stable manifolds are analytic manifolds comes from the standard Stable Manifold theorem for hyperbolic sets (see for example Theorem 6.4.9 in Katok-Hasselblatt \cite{KH}).
\end{proof}

\begin{lemm}
The analytic disks foliating $\mathcal{W}^S(W)$ intersect transversally the diagonal $\Delta:=\{(z,z) \in \mathbb{C}^2, z \in \mathbb{C}\}$ along the unit circle.
\end{lemm}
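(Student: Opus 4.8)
The statement to prove is that the analytic disks foliating $\mathcal{W}^S(W)$ meet the diagonal $\Delta$ transversally, and that the union of their intersection points with $\Delta$ is the unit circle. The plan is to work entirely in the model case $c=0$, where everything is explicit. Recall from the previous lemma that $\mathcal{W}^S(W) = \{1-a < R < 1+a\} \times \{S=1\}$, and that the leaves of the stable foliation are the level sets $\{S = 1, R = \text{const}\}$ inside $W$. So the claim has two halves: (i) identify $\mathcal{W}^S(W) \cap \Delta$, and (ii) check transversality of each leaf with $\Delta$ at the intersection points.

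For (i), I would simply intersect the equations. On the diagonal, $x = y = z$, so $R(z,z) = |z|/|z|^{\beta_1} = |z|^{1-\beta_1} = |z|^{\beta_2}$ and $S(z,z) = |z|/|z|^{\beta_2} = |z|^{1-\beta_2} = |z|^{\beta_1}$. The condition $S = 1$ together with $\beta_1 \neq 0$ forces $|z| = 1$; and then automatically $R(z,z) = 1$, which lies in the open interval $(1-a, 1+a)$. Hence $\mathcal{W}^S(W) \cap \Delta = \{(z,z) : |z| = 1\}$, the unit circle, and moreover it is contained in the single leaf $\{S=1, R=1\}$ — i.e. the whole circle sits inside \emph{one} stable disk (the stable manifold of the fixed point $(1,1)$-type behavior is not quite right; rather, the circle is the intersection of the diagonal with the stable manifold of the torus, and the torus itself meets $\Delta$ in this circle). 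Actually the cleaner statement is: each point $e^{i\theta}$ on the unit circle of $\Delta$ lies on the stable disk through the torus point $(e^{i\theta}, e^{i\theta}) \in \mathbf{T}_0$, and the diagonal circle is exactly the locus where $\Delta$ crosses the stable lamination.

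For (ii), transversality is a computation of tangent spaces at a point $p = (e^{i\theta}, e^{i\theta})$. The diagonal $\Delta$ is the complex line spanned by $(1,1)$, a real $2$-plane in $\mathbb{C}^2 \cong \mathbb{R}^4$. The stable disk through $p$ is the analytic disk $D_\theta := \{(x,y) : S(x,y) = 1\} \cap \{R = R(p)\} \cap W$; since the stable manifold $W^s(\mathbf{T}_0)$ is the $3$-real-dimensional hypersurface $\{S = 1\} = \{|x| = |y|^{\beta_2}\}$ and the individual leaf is $2$-real-dimensional, I would first compute $T_p\{S=1\}$ by differentiating $\log|x| - \beta_2 \log|y| = 0$, getting the real hyperplane $\{ \mathrm{Re}(e^{-i\theta}\xi) = \beta_2 \,\mathrm{Re}(e^{-i\theta}\eta)\}$ for $(\xi,\eta) \in \mathbb{C}^2$. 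The diagonal direction $(1,1)$ gives $\mathrm{Re}(e^{-i\theta}) = \beta_2 \mathrm{Re}(e^{-i\theta})$, i.e. $(1-\beta_2)\mathrm{Re}(e^{-i\theta}) = 0$; since $1 - \beta_2 = \beta_1 \neq 0$, this holds only when $\mathrm{Re}(e^{-i\theta}) = 0$, so $(1,1) \notin T_p\{S=1\}$ in general, which shows $\Delta$ is already transverse to the full stable manifold, hence a fortiori complementary dimension count gives $T_p \Delta \oplus T_p D_\theta = \mathbb{R}^4$ once we also check the intersection $T_p\Delta \cap T_p D_\theta = 0$. The remaining check is that $(i,i)$ (the other real direction of $\Delta$) is not in $T_p D_\theta$; since $D_\theta$ additionally lies in $\{R = \text{const}\}$, I would differentiate $\log|x| - \beta_1\log|y| = 0$ as well and verify that the two linear conditions cut $(i,i)$ out. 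Both are one-line linear-algebra verifications using $\beta_1 \neq \beta_2$ and $\beta_1\beta_2 = -1 \neq 1$.

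The main obstacle is essentially bookkeeping rather than conceptual: one must be careful that $R$ and $S$ are only \emph{locally} well-defined (they involve branches of $y \mapsto y^{\beta_i}$), so the level sets are defined leaf-by-leaf, and that "transversal intersection along the unit circle" is a statement uniform in $\theta$. Since the relevant computation of $T_pD_\theta$ depends on $\theta$ only through the rotation factor $e^{i\theta}$, and the determinant condition $\det\begin{pmatrix} 1 & -\beta_1 \\ 1 & -\beta_2\end{pmatrix} = \beta_1 - \beta_2 = \sqrt 5 \neq 0$ is $\theta$-independent, uniformity is automatic. I would therefore present (i) as the explicit intersection of defining equations and (ii) as the single determinant computation, remarking that transversality then follows by dimension count ($2 + 2 = 4$).
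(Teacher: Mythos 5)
Your step (i) is correct and is exactly the paper's argument: on the diagonal $S(z,z)=|z|^{1-\beta_2}=|z|^{\beta_1}$, so $S=1$ forces $|z|=1$, and then $R(z,z)=1\in(1-a,1+a)$. The gap is in step (ii), and it begins with the identification of the leaves. The analytic disks foliating $\mathcal{W}^S(W)$ are the local stable manifolds of points of $\mathbf{T}_0$, i.e.\ (as in the paper's proof) the level sets of the \emph{complex}-valued function $\phi_2(x,y)=x/y^{\beta_2}$: graphs $x=\lambda y^{\beta_2}$ with $\lambda$ a complex constant of modulus $1$, defined with a local branch of $y^{\beta_2}$. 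They are \emph{not} the joint level sets $\{S=1,\ R=\mathrm{const}\}$ of the two real functions $R=|\phi_1|$, $S=|\phi_2|$: fixing both moduli fixes $|x|$ and $|y|$ (since $\beta_1\neq\beta_2$), so $\{S=1,\ R=r\}$ is a piece of a real $2$-torus $\{|x|=c_1,\ |y|=c_2\}$, not a complex disk; at $p=(e^{i\theta},e^{i\theta})$ your $D_\theta$ is in fact a piece of $\mathbf{T}_0$ itself. For that set the transversality you want is false: your two differentiated conditions give $T_pD_\theta=\{(\xi,\eta):\ \mathrm{Re}(e^{-i\theta}\xi)=\mathrm{Re}(e^{-i\theta}\eta)=0\}$, which contains $(ie^{i\theta},ie^{i\theta})\in T_p\Delta$ — the tangent vector of the unit circle, which lies entirely inside $D_\theta$. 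Testing only the particular vectors $(1,1)$ and $(i,i)$ does not detect this, and indeed $T_p\Delta\cap T_pD_\theta$ is a line, so $T_p\Delta+T_pD_\theta\neq\mathbb{R}^4$. (Likewise, transversality of $\Delta$ to the hypersurface $\{S=1\}$ — which anyway cannot be read off from the single vector $(1,1)$, since $(1,1)\in T_p\{S=1\}$ at $\theta=\pm\pi/2$ — does not imply transversality to the $2$-dimensional leaves inside it.)

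The repair is short and is what the paper does. The leaf through $p=(e^{i\theta},e^{i\theta})$ is $\{x=\lambda y^{\beta_2}\}$ with $\lambda=e^{i\theta\beta_1}$ for a suitable branch. Differentiating at $p$ gives $\xi=\lambda\beta_2\,y^{\beta_2-1}\eta=\beta_2\,\eta$ (using $\beta_1+\beta_2=1$), so the complex tangent line of the leaf is $\{(\beta_2\eta,\eta):\eta\in\mathbb{C}\}$, while $T_p\Delta=\{(\xi,\xi):\xi\in\mathbb{C}\}$. Since $\beta_2\neq1$, these two complex lines meet only at $0$, hence span $\mathbb{C}^2$, which is the desired transversality, uniformly in $\theta$. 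So keep your (i), but replace the real level-set description of the leaves by the complex graph description before computing tangent spaces.
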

\begin{proof}
This comes from the fact that $\{ S=1 \} \cap \Delta$ is given by the set $\left \{ (x,x) | \frac{|x|}{|x|^{\beta_2}}=1 \right \}$, which is the unit circle inside the diagonal. Now the local stable manifolds of the torus are locally given by graphs of the form $\frac{x}{y^{\beta_2}}= \textrm{const}$ (where we use local branches of the function $y^{\beta_1}$), and those are transverse to the diagonal.

\end{proof}

\section{Small perturbations}

The main idea of this section is that the invariant torus $\mathbf{T}_0$ will persists under perturbations when $c \neq 0$, and the local stable manifold of the torus will move analytically when $c$ is changed, and will therefore intersect the diagonal along points that move analytically when $c$ changes, thus defining a holomorphic motion of the initial circle $\mathcal{C}:= \mathbf{T}_0 \cap \Delta$.

The general theory of Structural Stability of hyperbolic sets ensures the existence of an invariant torus $\mathbf{T}_c$ for $c$ small enough, such that the maps $H_c$ and $H_0$ are conjugated on the tori.

The relevant Theorem is taken from Katok-Hasselblatt \cite{KH}.
\begin{thm}[Structural stability of Hyperbolic sets]
Let $M$ be a Riemannian manifold and $U \subset M$. Let $\Lambda \subset M$ be a hyperbolic set of the diffeomorphism $f:U \rightarrow M$. Then for any open neighborhood $V \subset U$ of $\Lambda$ and every $\delta >0$ there exists $\epsilon >0$ such that if $f':U \rightarrow M$ and $d_{C 1}(f_{\vert V},f')< \epsilon$, there is a hyperbolic set $\Lambda'=f'(\Lambda') \subset V$ for $f'$ and a homeomorphism $h: \Lambda' \rightarrow \Lambda$ with $d_{C^0}(Id, h)+ d_{C^0}(Id, h^{-1})< \delta$ ($d_{C^i},\; i=0,1$ distances induced by $C^i$ topologies) such that $h \circ f'_{\vert \Lambda'}=f_{\vert \Lambda} \circ h$. Moreover, $h$ is unique when $\delta$ is small enough.

\end{thm}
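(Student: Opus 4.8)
This is a classical statement, so the plan is to recall the standard argument rather than to give a self-contained treatment; in the case at hand one has $M=\C^2\cong\R^4$, $f=H_0$, $f'=H_c$, $\Lambda=\mathbf{T}_0$, and $V\subset\C^2$ a four-dimensional neighbourhood of the torus. The backbone of the proof is the shadowing lemma together with the persistence of hyperbolicity under a $C^1$-small perturbation, both of which flow from a single linear estimate.

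First I would fix a continuous (not necessarily invariant) extension of the hyperbolic splitting $T_{\Lambda}M=E^s\oplus E^u$ to a smaller neighbourhood $V'\subset V$ of $\Lambda$, together with an adapted Riemannian metric in which $\|Df|_{E^s}\|\le\mu<1$ and $\|Df^{-1}|_{E^u}\|\le\mu$. The analytic heart of the matter is that, along any orbit segment of $f$ that remains in $V'$, the operator "transfer cocycle minus identity" is boundedly invertible on the Banach space of bounded sequences of tangent vectors, with a bound depending only on $\mu$. Taking $\epsilon$ small enough forces $f'$ to preserve a cone field on $V'$ and to keep this estimate valid with a slightly worse constant; this is precisely the statement that $f'$ is hyperbolic on $V'$ in the relevant sense.

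Second, I would run the shadowing argument in both directions. Given $x\in\Lambda$, the $f$-orbit $(f^n x)_n$ is an $O(\epsilon)$-pseudo-orbit for $f'$; by the contraction-mapping form of shadowing (using the bounded invertibility above and the implicit function theorem) there is a \emph{unique} genuine $f'$-orbit $(y_n)_n$ staying in $V'$ with $\sup_n d(f^n x,y_n)<\delta/2$. Set $g(x):=y_0$ and $\Lambda':=g(\Lambda)$. By construction $\Lambda'=f'(\Lambda')\subset V$, and $\Lambda'$ is a hyperbolic set for $f'$ because it sits inside $V'$ where the cone field is invariant and the expansion/contraction estimates hold. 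Uniqueness of the shadowing orbit, applied to shifted indices, gives $g\circ f=f'\circ g$ on $\Lambda$, so $h:=g^{-1}$ will conjugate $f'|_{\Lambda'}$ to $f|_{\Lambda}$ once $g$ is shown to be a homeomorphism onto $\Lambda'$; its inverse is produced by the same construction with the roles of $f$ and $f'$ exchanged, and the two constructions are mutually inverse again by uniqueness of shadows. Continuity of $h$ and $h^{-1}$ comes from the uniform Lipschitz dependence of the shadowing orbit on the pseudo-orbit, and $d_{C^0}(\mathrm{id},h)+d_{C^0}(\mathrm{id},h^{-1})<\delta$ holds because the shadowing distance tends to $0$ as $\epsilon\to0$.

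Finally, uniqueness of $h$ for small $\delta$ follows from expansivity of $f$ on the hyperbolic set $\Lambda$: if $h_1,h_2$ are two such conjugacies $\delta$-close to the identity, then for each $x'\in\Lambda'$ both $h_1(x')$ and $h_2(x')$ have $f$-orbits that remain within $\delta$ of the $f'$-orbit of $x'$, hence within $2\delta$ of one another for all times, and taking $\delta$ below the expansivity constant of $\Lambda$ forces $h_1(x')=h_2(x')$. The step I expect to be the main obstacle is the persistence part, namely checking that the set $\Lambda'$ produced by shadowing is genuinely a hyperbolic set for $f'$ — not merely that individual orbits get shadowed — which is exactly where one must invoke the cone-field criterion for hyperbolicity and control the linearized estimates uniformly in the perturbation; everything after that is bookkeeping around the contraction mapping.
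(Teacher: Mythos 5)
The paper does not prove this statement at all: it is quoted verbatim as a background tool, with an explicit citation to Katok--Hasselblatt, and is then applied to $f=H_0$, $f'=H_c$, $\Lambda=\mathbf{T}_0$. Your outline is essentially the standard proof of that cited theorem (shadowing via the contraction-mapping/bounded-invertibility estimate, persistence of hyperbolicity through an invariant cone field on a neighborhood, and expansivity for uniqueness of the conjugacy), so it is consistent with the source the paper relies on; the only small point worth noting is that expansivity of $f$ on $\Lambda$ is needed not just for the uniqueness of $h$ but already to see that your shadowing map $g$ is injective (and hence, by compactness of $\Lambda$, a homeomorphism onto $\Lambda'$), which is cleaner than producing $h^{-1}$ by running the shadowing construction in the reverse direction, since the reverse shadow need not land in $\Lambda$ unless one invokes local uniqueness of shadowing orbits in a neighborhood.
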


{\bf Dependence of the local stable manifolds on the parameters.}

We recall that for any $x \in K$ we can define a {\it local stable manifold} $\mathcal{V}_{x}^{-}$ and a {\it local unstable manifold} $\mathcal{V}_{x}^{+}$ (see \cite{Rue} for more details).

\begin{thm}[Persistence]
The manifolds $\mathcal{V}_{x}^{\pm}$ are of class $C^{r}$ when $f$ is of class $C^{r}$. Moreover the stable and unstable manifolds of $\Lambda'$ depend continuously on $f'$ for the $C^r$ topologies.
\end{thm}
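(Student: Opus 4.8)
\noindent The plan is to prove both assertions at once by the classical Hadamard--Perron graph transform, carrying the perturbed map $f'$ along as a parameter throughout (see \cite{KH}, and also the references in \cite{Rue}). Let $\Lambda$ be the hyperbolic set of $f$, with $Df$-invariant splitting $TM|_\Lambda=E^s\oplus E^u$ and uniform constants $0<\lambda<1<\mu$. First I would extend $E^s,E^u$ to continuous (no longer invariant) subbundles over a neighbourhood $V$ of $\Lambda$, fix a smooth adapted metric in which the stable and unstable cone fields are strictly forward/backward invariant under $f$, and observe that these cone estimates persist, with slightly degraded constants, for every $f'$ with $d_{C^1}(f|_V,f')$ small and every continuation $\Lambda'=f'(\Lambda')$ produced by the previous (structural stability) theorem. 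For $x\in\Lambda'$ one then seeks the local stable manifold $\mathcal{V}^-_x$ as the graph of a map $\sigma_x\colon B_\varepsilon\subset E^s_x\to E^u_x$ with $\sigma_x(0)=0$ and $\mathrm{Lip}(\sigma_x)\le 1$; such a family of graphs is a point of a complete metric space $\Sigma$ of continuous sections equipped with the $C^0$ metric.

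The graph transform $\Gamma_{f'}\colon\Sigma\to\Sigma$ sends a family of graphs to the one obtained by pushing forward by $(f')^{-1}$ and re-expressing the image as a graph over $E^s$; the cone conditions make this well defined, and the expansion of $f'$ along $E^u$ (equivalently, contraction of $(f')^{-1}$ there) forces $\Gamma_{f'}$ to be a contraction of $\Sigma$ with rate $\theta<1$ depending only on $\lambda,\mu$ and uniform over $f'$ in a fixed $C^1$-neighbourhood of $f$. Its unique fixed point is the family $x\mapsto\mathcal{V}^-_x$; since $(f',x)\mapsto\Gamma_{f'}$ is continuous in the relevant topologies and the contraction is uniform, the fixed point is continuous in $(f',x)$ for the $C^0$ topology. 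The unstable manifolds $\mathcal{V}^+_x$ are handled identically with $f'$ replaced by $(f')^{-1}$.

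To upgrade to $C^r$ regularity and $C^r$ dependence I would lift the graph transform to the bundle of $r$-jets of sections. The lifted map $\widehat\Gamma_{f'}$ is fibered over $\Gamma_{f'}$, and on each fibre, which records the derivatives of $\sigma_x$ up to order $r$, it acts by an affine map whose linear part is assembled from at most $r$ factors of $Df'|_{E^s}$ composed with one factor of $(Df'|_{E^u})^{-1}$; the hyperbolicity estimates bound the norm of this linear part on the $k$-jet level by roughly $\lambda^{k}\mu^{-1}<1$ for $1\le k\le r$ (for $H_c$ the required spectral gap is automatic, the torus $\mathbf{T}_0$ being genuinely hyperbolic with multipliers $\beta_1$ and $\beta_2=1-\beta_1$). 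Hence $\widehat\Gamma_{f'}$ contracts the base and each fibre, so the fibre contraction theorem of Hirsch--Pugh applies: it has a globally attracting fixed section, which is then the $r$-jet of $x\mapsto\mathcal{V}^-_x$, proving $\mathcal{V}^\pm_x\in C^r$; running the same argument with $f'$ as an extra base parameter gives continuity of that jet, hence continuity of $\mathcal{V}^\pm_x$ in the $C^r$ topology as $f'$ ranges over a $C^r$-neighbourhood of $f$. The main obstacle is precisely this last step: one must verify carefully that \emph{all} the fibrewise contraction constants $\lambda^{k}\mu^{-1}$, $k=1,\dots,r$, are strictly below $1$ and stay so under a small $C^r$ perturbation of $f$ — this is where uniform hyperbolicity and the $C^r$-closeness hypothesis are genuinely used — after which the fibre contraction theorem delivers regularity and parameter dependence simultaneously, with no further computation.
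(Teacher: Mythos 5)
The paper does not prove this statement at all: it is recalled as a classical background result, with the local stable/unstable manifolds $\mathcal{V}_x^{\pm}$ taken from Ruelle \cite{Rue} and the continuation $\Lambda'$ from the structural stability theorem quoted from Katok--Hasselblatt \cite{KH}, so there is no in-paper argument to compare against. Your Hadamard--Perron graph transform plus fibre contraction (Hirsch--Pugh) argument is precisely the standard proof found in those references, and as a sketch it is correct: graphs over $E^s$ with values in $E^u$, contraction of the transform uniform in $f'$ on a $C^1$-neighbourhood, then the lift to $r$-jets with fibrewise linear part of norm about $\lambda^{k}\mu^{-1}$ for the stable case (and $\lambda\mu^{-k}$ for the unstable one), and the extra base parameter $f'$ to get $C^r$ dependence. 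Two small corrections: the step you single out as ``the main obstacle'' is not one --- since $0<\lambda<1<\mu$, the constants $\lambda^{k}\mu^{-1}$ are below $1$ for every $k\ge 1$ with no bunching hypothesis, and they remain so after a small perturbation because hyperbolicity constants are robust; bunching-type conditions only enter if one wants regularity of the lamination in the base point $x$, not regularity of each leaf and its $C^r$ dependence on $f'$. Also, since you invoke $(f')^{-1}$, note that in the application $H_c$ is only birational, but it is a diffeomorphism on a neighbourhood of the invariant torus (the second coordinate stays away from $0$ there), which is all the graph transform needs.
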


We will need a bit more, namely the analytic dependence of the stable manifold $\mathcal{W}^s(W)$:

\begin{lemm}
The local stable manifolds in $W$ depend analytically in $(x,y,c)$.
\end{lemm}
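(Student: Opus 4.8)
The plan is to reconstruct $\mathcal{W}^S(W)$ by the Hadamard--Perron graph transform, carried out in the holomorphic category, with $(x,y,c)$ entering as holomorphic parameters, and then to read off analyticity of the local stable manifolds from the fact that the graph-transform iterates converge uniformly (indeed geometrically) in $c$. In particular the cited Persistence Theorem already gives $C^r$ dependence for every $r$; the new content here is to upgrade this to honest analyticity, uniformly in $c$.

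\emph{First} I would put $H_c$ into a parametrized normal form near the torus. Writing $x=e^{\xi}$, $y=e^{\eta}$, on a fixed neighbourhood $\mathcal U$ of $\mathbf T_0=i\R^2/\Lambda$ (with $\Lambda=2\pi i\Z^2$) the map $H_0$ becomes the complex-linear hyperbolic automorphism with matrix $M=\bigl(\begin{smallmatrix}1&1\\1&0\end{smallmatrix}\bigr)$, whose contracting eigenline is $E^s=\C(\beta_2,1)$ ($|\beta_2|<1$) and expanding eigenline $E^u=\C(\beta_1,1)$, while $H_c$ becomes
$$
\widetilde H_c(\xi,\eta)=M\binom{\xi}{\eta}+\binom{\log(1+c\,e^{-\xi-\eta})}{0},
$$
which is holomorphic in $(\xi,\eta)\in\mathcal U$ and in $c$ for $|c|$ small, with $\widetilde H_0=M$. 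By the Structural Stability Theorem, for $|c|$ small there is a hyperbolic invariant torus $\mathbf T_c$ with a conjugacy $h_c\colon\mathbf T_c\to\mathbf T_0$, and by the Persistence Theorem its invariant splitting $E^s_c\oplus E^u_c$ varies continuously; since $D\widetilde H_c$ is $\C$-linear, $E^s_c$ and $E^u_c$ are complex line fields. After one further coordinate change --- real-analytic along the torus, holomorphic transverse to it and in $c$ (and in fact holomorphic in every variable once $\mathbf T_0$ is complexified inside $(\C^*)^2$, to which $H_c$ extends holomorphically) --- I may assume $\mathbf T_c=\mathbf T_0=\{w=v=0\}$ is fixed, $E^s_c=\{v=0\}$, $E^u_c=\{w=0\}$, and
$$
\widetilde H_c(\theta,w,v)=\bigl(g_c(\theta),\ A^s_c(\theta)\,w+R^s_c(\theta,w,v),\ A^u_c(\theta)\,v+R^u_c(\theta,w,v)\bigr),
$$
with $g_c=\widetilde H_c|_{\mathbf T_0}$, $\sup_{\theta,\,|c|<\alpha}|A^s_c(\theta)|<1<\inf_{\theta,\,|c|<\alpha}|A^u_c(\theta)|$, remainders $R^{s,u}_c=O(|w|^2+|v|^2)$, everything holomorphic in $(w,v,c)$ and real-analytic (resp. holomorphic after complexifying) in $\theta$.

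\emph{Next} I would set up the graph transform. Seek $W^s_{\mathrm{loc}}$ of $(\theta,0,0)$ as a graph $\{v=\sigma_c(\theta,w):w\in\D_r\}$ with each $\sigma_c(\theta,\cdot)$ holomorphic on $\D_r$, $\sigma_c(\theta,0)=0$, $\|\sigma_c\|_\infty\le\epsilon$, $\mathrm{Lip}_w\sigma_c\le1$; on the Banach space $\mathcal X$ of such families (sup norm) define $\Gamma_c$ by requiring $\mathrm{graph}\,(\Gamma_c\sigma)_\theta$ to be the graph-over-$w$ component of $\widetilde H_c^{-1}\!\bigl(\mathrm{graph}\,\sigma_{g_c(\theta)}\bigr)$ through $(\theta,0,0)$ --- concretely, one solves for the $w$-preimage by the implicit function theorem (legitimate because $A^s_c$ dominates the nonlinearity) and substitutes. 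The standard Hadamard--Perron estimates (see e.g. \cite{KH}, or Hirsch--Pugh--Shub) give that $\Gamma_c$ is a well-defined contraction on $\mathcal X$ with ratio $\le\kappa<1$ \emph{uniformly} for $|c|<\alpha$; its unique fixed point $\sigma_c$ satisfies $\mathrm{graph}\,\sigma_c=\mathcal W^S(W)$, foliated by the holomorphic disks $\{\theta=\mathrm{cst}\}$ --- holomorphic because the construction is fibrewise in the holomorphic category, equivalently because by uniqueness of the stable manifold these disks coincide with the holomorphic stable leaves of the complex-linear hyperbolic set $\mathbf T_c$, matching the earlier Lemma on analytic disks.

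\emph{Finally} I would extract analyticity. For fixed $\sigma$, the map $(\theta,w,c)\mapsto(\Gamma_c\sigma)(\theta,w)$ is holomorphic in $(w,c)$ and real-analytic in $\theta$ (holomorphic in the complexified $\theta$), since $\widetilde H_c^{-1}$, the implicit solve and the substitution are; and $\sigma\mapsto\Gamma_c\sigma$ is a holomorphic self-map of $\mathcal X$. Starting from $\sigma_0\equiv0$, $\sigma_c=\lim_n\Gamma_c^{\,n}\sigma_0$ with $\|\Gamma_c^{\,n}\sigma_0-\sigma_c\|\le\kappa^n(1-\kappa)^{-1}\|\Gamma_c\sigma_0\|$ uniformly on $|c|<\alpha$; each $\Gamma_c^{\,n}\sigma_0$ is, by induction, holomorphic in $(w,c)$ and in the complexified $\theta$ on a fixed domain, so its uniform limit $\sigma_c(\theta,w)$ is holomorphic there by Weierstrass's theorem --- i.e. real-analytic in $\theta$ and holomorphic in $(w,c)$. (This is just the analytic uniform-contraction principle applied to $(\sigma,c)\mapsto\Gamma_c\sigma$.) Undoing the coordinate changes of the first step then yields a map $\Phi(\theta,w,c)$ with the same regularity whose image is, for each $c$, $\mathcal W^S(W)$ together with its holomorphic-disk foliation; hence the local stable manifolds in $W$ depend analytically on $(x,y,c)$. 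The genuinely non-routine point is precisely this last step --- promoting the $C^r$ dependence of the Persistence Theorem to honest analyticity \emph{uniformly in $c$}; it works only because every ingredient of $\Gamma_c$ ($\widetilde H_c$, $\widetilde H_c^{-1}$, the implicit solve, composition, substitution) is holomorphic in the fibre variable $w$ and in $c$ and the contraction ratio $\kappa$ is independent of $c$, and because the torus direction $\theta$ can be complexified consistently, $\mathbf T_0\subset(\C^*)^2$ and $H_c$ extending holomorphically there.
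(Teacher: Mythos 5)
Your overall strategy --- replace the paper's citation by a direct holomorphic graph-transform argument, with $c$ as an extra holomorphic parameter and analyticity of the limit obtained from uniform contraction plus Weierstrass --- is a legitimate route, and its fiberwise core (holomorphic dependence on the leaf coordinate and on $c$ of the fixed point of a uniformly contracting, holomorphically parametrized operator) is exactly the kind of argument that works here. However, the construction on which you build it has a genuine flaw. Your adapted coordinates $(\theta,w,v)$, with $\theta$ ranging over the $2$-real-dimensional torus $\mathbf{T}_0$ and $w,v$ ranging over complex disks in the stable and unstable complex lines, describe a $6$-real-dimensional object, whereas the phase space $\C^2$ has real dimension $4$: the torus is totally real of maximal dimension, so $E^s\oplus E^u$ is already all of $T\C^2\vert_{\mathbf{T}_0}$ and contains the torus-tangent directions; there is no room for two complex normal coordinates in addition to $\theta$, and hence no normal form $\widetilde H_c(\theta,w,v)$ of the type you write. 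The subsequent straightening step is also unjustified: the continued torus $\mathbf{T}_c$ and its splitting $E^s_c\oplus E^u_c$ are produced by structural stability and are in general only H\"older in the base point (here there is no normal-hyperbolicity gap, since tangential and normal rates coincide), so one cannot posit a change of variables real-analytic in $\theta$ and holomorphic in $c$ that fixes $\mathbf{T}_c=\mathbf{T}_0$ and straightens the splitting, and a fortiori one cannot conclude, as you do at the end, analyticity of the stable disks in a ``complexified $\theta$''. Complexifying the torus direction inside $(\C^*)^2$ does not help: the complexification is just an open neighborhood in the phase space, carries no invariant structure, and leaves no transverse directions, so it cannot deliver joint analyticity in the transverse variable. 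That transverse analyticity is in any case more than the lemma asserts or the paper ever uses.

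What is actually needed (and what the paper extracts) is leafwise statements: each local stable leaf is the graph of a function holomorphic in the leaf variable and in $c$, with merely continuous dependence on the leaf. Your contraction argument proves exactly this once it is run without the global normal form: work along individual orbits in $W$ with adapted bidisks and cone fields, i.e.\ in the crossed-mapping framework of \cite{HOVII} that the paper sets up in the next section (Lemma~\ref{HzeroCrossedMapping}); the vertical-like holomorphic graphs obtained as fixed points of the fiberwise graph transform depend holomorphically on $c$ by the uniform-contraction/Weierstrass argument you describe, and only continuously on the orbit. The paper itself takes a shorter path: it invokes Berger's persistence theorem for $0$-normally expanded complex laminations (\cite{Ber}, Theorem 0.3), which directly yields a complex analytic family of laminations, holomorphic along leaves and in the parameter and continuous transversally. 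So your approach can be repaired into a self-contained alternative to that citation, but as written the coordinate/normal-form step fails and the claimed analyticity in the torus direction is unsupported.
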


\begin{proof}

One way to prove this Lemma would be to show the analytic dependence of the local stable manifolds of periodic points, and use the density of these periodic points on the invariant torus.
Instead we use here some more general results about persistence of complex laminations due to Berger (see \cite{Ber}). In \cite{Ber}, Theorem 0.3 the author proves the persistence of a complex lamination $\mathcal{L}$ preserved by holomorphic endomorphisms, under the technical condition that the lamination is "0-normally expanded", a condition that is immediately satisfied in our situation where the lamination is made from local leaves of stable manifolds.
Thus the local stable manifolds in $W$ can be described as graphs of analytic functions of the form $f(y,c)$.
\end{proof}

\begin{rem} As observed in the introduction of this section the invariant torus and the hyperbolicity persist. Let us mention that, in addition, topological entropy, exponential mixing and the spectral structure persist under such small perturbations. For more details on the ergodic properties of $H_c$ , we refer the reader to the survey by Guedj \cite{guedj} and the references therein.
\end{rem}

Now we are able to construct the quasi-disk.
\begin{prop}
Let $W$ be defined as in Lemma \ref{GoodPosition}. Then there exists $\epsilon>0$ such that for each $|c|< \epsilon$ we have:
$\mathcal{W}^s(W)$  intersects $\Delta$ along a quasi-disk $D_c$.
\end{prop}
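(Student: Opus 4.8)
The plan is to combine the analytic dependence of the local stable manifolds (the previous Lemma) with the holomorphic motion machinery. First I would recall that when $c=0$, the previous section shows $\mathcal{W}^s(W)$ is foliated by analytic disks which are pieces of the stable manifold of points of the torus $\mathbf{T}_0$, and that these disks intersect $\Delta$ transversally along the unit circle $\mathcal{C}$. Each such leaf through a torus point $(e^{i\alpha},e^{i\beta})$ is, in the good coordinates, a graph $x = y^{\beta_2}\cdot k$ for a constant $k$ depending analytically on the point; by the analytic dependence Lemma this leaf persists for $|c|<\epsilon$ as a graph of an analytic function $x = \psi(y,c)$, with $\psi(\cdot,0)$ giving the $c=0$ leaf. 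Since the leaf for $c=0$ meets $\Delta$ transversally, the implicit function theorem (applied to $\psi(y,c)-y=0$) yields, for $|c|$ small, a unique nearby intersection point $h_c(p)$ of the perturbed leaf with $\Delta$, depending analytically on $c$ and on the base point $p\in\mathcal{C}$; this is because transversality is an open condition and the $C^1$-distance between the perturbed and unperturbed leaves is small by the Persistence Theorem.

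Next I would assemble these intersection points into a holomorphic motion of $\mathcal{C}$. Define $\Phi:\mathbb{D}_\epsilon\times\mathcal{C}\to\Delta\cong\mathbb{C}$ by $\Phi(c,p)=h_c(p)$. The map $\Phi(0,\cdot)=\mathrm{id}$; for each fixed $p$, $c\mapsto\Phi(c,p)$ is holomorphic (the intersection point depends analytically on $c$ by the implicit function theorem argument, since $\psi$ is analytic in $(y,c)$); and for each fixed $c$, $p\mapsto\Phi(c,p)$ is injective, because distinct leaves of the foliation $\mathcal{W}^s(W)$ are disjoint (they are distinct stable manifolds of distinct torus points), so their intersection points with $\Delta$ are distinct. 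Thus $\Phi$ is a holomorphic motion of the circle $\mathcal{C}$ parametrized by $c$ in a small disk. By the $\lambda$-lemma of Mañé--Sad--Sullivan (or Słodkowski's extension), $\Phi(c,\cdot)$ extends to a quasiconformal homeomorphism of $\mathbb{C}$ for each $|c|<\epsilon$, and in particular $D_c:=\Phi(c,\mathcal{C})$ is a quasi-circle; since $\mathcal{C}$ bounds a disk, $D_c$ bounds a quasi-disk, which I denote (abusing notation slightly) $D_c$ as well.

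Finally I would verify that this quasi-disk is genuinely $\mathcal{W}^s(W)\cap\Delta$. The inclusion $D_c\subset\mathcal{W}^s(W)\cap\Delta$ is immediate from the construction. For the reverse inclusion, any point $q\in\mathcal{W}^s(W)\cap\Delta$ lies on some leaf of the perturbed stable lamination in $W$; that leaf is close in $C^1$ to an unperturbed leaf which meets $\Delta$ near a unique point $p\in\mathcal{C}$, and by the uniqueness in the implicit function theorem the intersection of the perturbed leaf with $\Delta$ near $p$ is exactly $h_c(p)$, whence $q=h_c(p)\in D_c$ provided $\epsilon$ is small enough that no "extra" intersections appear — this follows because $\mathcal{W}^s(W)$ stays in a small neighborhood of $\mathbf{T}_0$ and the leaves remain uniformly transverse to $\Delta$.

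The main obstacle I anticipate is making precise and uniform the transversality-plus-persistence step: one must check that the $C^1$-proximity of the perturbed leaves to the unperturbed ones, furnished by the Persistence Theorem and Berger's lamination result, is strong enough (and uniform over the compact circle $\mathcal{C}$) to guarantee both existence and uniqueness of the nearby intersection with $\Delta$, and to rule out spurious intersection points away from $\mathcal{C}$. Granting the analytic dependence Lemma already proved, this is essentially a compactness argument, but it is where all the quantitative content sits; the rest — recognizing the family $\{h_c\}$ as a holomorphic motion and invoking the $\lambda$-lemma — is then routine.
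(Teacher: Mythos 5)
Your proposal follows essentially the same route as the paper: transversality of the $c=0$ stable leaves to $\Delta$ along the circle $\mathcal{C}$, analytic dependence of the perturbed leaves on $c$ giving persistent transverse intersection points, assembly of these into a holomorphic motion of $\mathcal{C}$ (with injectivity coming from disjointness of distinct leaves), and the $\lambda$-lemma to conclude that the image is a quasi-circle bounding the quasi-disk $D_c$. Your extra verification that the resulting set really is $\mathcal{W}^s(W)\cap\Delta$ is a sound addition that the paper defers in part to the final section, but it does not change the method.
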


\begin{proof}
The proof is a transversality argument:
through each point $p_0$ on the circle $\mathcal{C}:=\Delta \cap \mathcal{W}^s(W)$ there is a local stable manifold going through it and intersecting transversally the diagonal $\Delta$ in $p_0$.
The invariant torus persists, and thus one can define a continuous function $c \mapsto p_c \in \mathbb{T}_c$ such that $p_0$ is the initial point on the circle $\mathcal{C}$.
Now we know that transverse intersections persist under small perturbations, and also that local stable manifolds are graphs of functions that depend analytically in $c$. Thus the intersection of $W^s(p_c)$ with the diagonal $\Delta$ defines an analytic function $c \mapsto \phi(c,p_0)$ satisfying $\phi(0,p_0)=p_0$, hence a holomorphic motion of the entire circle $\mathcal{C}$.
Indeed, two distinct points $p_0,q_0$ have distinct local stable manifolds passing through them, that stay distinct for $c$ small enough and therefore have distinct intersection points with the diagonal.
\end{proof}

We recall here the definition of a holomorphic motion of a set in $\mathbb{C}$.

\begin{defn}[Holomorphic motion of a set $E \subset \mathbb{C}$]
Let $E$ be a subset of the Riemann sphere having at
least three points. Let $D(0,R)$ be the disk centered at the origin with radius $R$ in $\mathbb{C}$. Then a holomorphic motion of $E$ over $D(0,R)$ is a mapping
\[
f :D(0,R) \times E \rightarrow \hat{\mathbb{C}}
\]
with the following properties:
\begin{enumerate}
\item for a given $\lambda \in D(0,R)$, the map $f_{\lambda}:E \rightarrow \hat{\mathbb{C}}$ given by $f_{\lambda}(z)=f(\lambda,z)$ is injective;
\item the map $f_0$ is the identity;
\item for each $z \in E$, the map $\lambda \mapsto f_{\lambda}(z)$ is holomorphic.
\end{enumerate}
\end{defn}

Now such a holomorphic motion has an automatic extension to the entire Riemann sphere, and more importantly the extension is quasi-conformal. This is the content of the $\lambda-$lemma which we recall here, following the book of de Melo and de Faria \cite{deFaria}:

\begin{thm}[$\lambda$-lemma]

 Let $f :D(0,R) \times E \rightarrow \hat{\mathbb{C}}$ be a holomorphic motion of a set $E \subset \hat{\mathbb{C}}$. Then the map $f_{\lambda}: \overline{E} \rightarrow \hat{\mathbb{C}}$ given by $f_{\lambda}(z)=f(\lambda,z)$ is quasi-conformal for each $\lambda \in D(0,R)$. Moreover, $f$ has an extension to a continuous map $\hat{f}: D(0,R) \times \overline{E} \rightarrow \hat{\mathbb{C}}$ that is a holomorphic motion of the closure of $E$, and such an extension is unique.
\end{thm}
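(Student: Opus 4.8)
The plan is to prove this classical statement — the $\lambda$-lemma of Ma\~n\'e, Sad and Sullivan — by the cross-ratio / Schwarz--Pick method. After rescaling we may assume $R=1$ and write $D:=D(0,1)$. The decisive first step is the following. Given four distinct points $z_{1},z_{2},z_{3},z_{4}\in E$, the map
\[
\tau(\lambda):=\mathrm{cr}\bigl(f_{\lambda}(z_{1}),f_{\lambda}(z_{2}),f_{\lambda}(z_{3}),f_{\lambda}(z_{4})\bigr)
\]
is holomorphic in $\lambda\in D$ and, because $f_{\lambda}$ is injective for each fixed $\lambda$, takes its values in $\hat{\mathbb{C}}\setminus\{0,1,\infty\}$, with $\tau(0)=\mathrm{cr}(z_{1},z_{2},z_{3},z_{4})$. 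Since the thrice-punctured sphere is hyperbolic, Schwarz--Pick gives
\[
\mathrm{dist}_{\hat{\mathbb{C}}\setminus\{0,1,\infty\}}\bigl(\tau(\lambda),\tau(0)\bigr)\ \le\ \mathrm{dist}_{D}(0,\lambda)\ =\ \tfrac12\log\tfrac{1+|\lambda|}{1-|\lambda|},
\]
so every cross-ratio built from points of $E$ is moved by a hyperbolically bounded amount, the bound depending only on $|\lambda|$. I would then convert this into metric information: using the standard comparison between the hyperbolic metric of $\hat{\mathbb{C}}\setminus\{0,1,\infty\}$ and the spherical metric near the three punctures, bounded displacement of all cross-ratios forces $f_{\lambda}\colon E\to\hat{\mathbb{C}}$ to be quasisymmetric — in particular H\"older for the spherical metric — with constants depending only on $r$ whenever $|\lambda|\le r<1$. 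I expect this conversion to be the main technical obstacle; everything afterwards is soft.

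Next I would extend $f_{\lambda}$ to $\overline E$ (the case $E$ finite being trivial, so assume $E$ infinite). The estimate above is uniform in $|\lambda|\le r$ and over all pairs of points, so $f_{\lambda}$ is uniformly continuous on $E$ and extends continuously to $\hat{f}_{\lambda}\colon\overline E\to\hat{\mathbb{C}}$. Viewing $f_{\lambda}$ as a quasisymmetric homeomorphism from $E$ onto $f_{\lambda}(E)$, its inverse is again quasisymmetric, hence uniformly continuous; therefore $\hat{f}_{\lambda}$ is a homeomorphism of $\overline E$ onto its image — in particular injective — and a routine limiting argument shows $\hat{f}_{\lambda}$ is still quasisymmetric.

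It remains to see that $\hat{f}$ is a holomorphic motion of $\overline E$. For $z\in\overline E$ and $z_{n}\to z$ in $E$, the uniform modulus of continuity from the first step makes $f_{\lambda}(z_{n})\to\hat{f}_{\lambda}(z)$ uniformly on each disk $\{|\lambda|\le r\}$, so $\lambda\mapsto\hat{f}_{\lambda}(z)$ is a locally uniform limit of holomorphic functions and hence holomorphic; the same uniform control together with continuity in $\lambda$ gives joint continuity of $\hat{f}$ on $D\times\overline E$, and uniqueness of the extension is immediate since $E$ is dense in $\overline E$. Finally, the quasisymmetry of $\hat{f}_{\lambda}|_{\overline E}$ upgrades to genuine quasiconformality by a quasiconformal extension theorem (so that $\hat{f}_{\lambda}$ is the restriction to $\overline E$ of a quasiconformal homeomorphism of $\hat{\mathbb{C}}$, with dilatation controlled by $\tfrac{1+|\lambda|}{1-|\lambda|}$); in the situation where this lemma is applied here, with $E=\mathcal{C}$ the unit circle, this says precisely that $\hat{f}_{\lambda}(\mathcal{C})$ is a quasicircle, bounding the quasidisk $D_{c}$.
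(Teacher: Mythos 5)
The paper offers no proof of this statement: it is the classical $\lambda$-lemma of Ma\~n\'e--Sad--Sullivan, quoted as a black box from the book of de Faria and de Melo \cite{deFaria} and applied with $E=\mathcal{C}$ the unit circle. So there is nothing in the paper to compare you against except that reference; your outline is precisely the standard cross-ratio/Schwarz--Pick proof of the quoted theorem, and its architecture is correct: the map $\lambda\mapsto\mathrm{cr}\bigl(f_{\lambda}(z_{1}),f_{\lambda}(z_{2}),f_{\lambda}(z_{3}),f_{\lambda}(z_{4})\bigr)$ does land in the thrice-punctured sphere by injectivity of each $f_{\lambda}$, Schwarz--Pick bounds its hyperbolic displacement by $\mathrm{dist}_{D}(0,\lambda)$, and from there uniform bidirectional moduli of continuity on $\{|\lambda|\le r\}$ give the continuous extension to $\overline{E}$, holomorphy in $\lambda$ of the extension by locally uniform convergence, injectivity, joint continuity, and uniqueness by density.

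Two points deserve flagging. First, the step you yourself identify as the main obstacle --- converting the bounded hyperbolic displacement of all cross-ratios into a quasisymmetric (in particular uniformly continuous, in both directions) estimate for $f_{\lambda}$ with constants depending only on $r$ --- is the real content of the lemma and is only announced, not carried out; it requires the explicit comparison of the hyperbolic metric of $\hat{\mathbb{C}}\setminus\{0,1,\infty\}$ with the spherical metric near the punctures (or, alternatively, a Montel normal-families argument as in the original Ma\~n\'e--Sad--Sullivan paper, with injectivity of the extension obtained more cleanly via Hurwitz applied to $\lambda\mapsto f_{\lambda}(z)-f_{\lambda}(w)$). Second, ``quasi-conformal'' for a map defined only on a closed set should be read, as in the cited source, as quasisymmetric; your final upgrade to a quasiconformal homeomorphism of $\hat{\mathbb{C}}$ with dilatation exactly $\frac{1+|\lambda|}{1-|\lambda|}$ is not a consequence of the basic argument --- for $E$ a circle it follows from Beurling--Ahlfors or Douady--Earle extension (which is all that is needed for the quasicircle $\mathcal{C}_{c}$ in the paper), while in general it is the content of the extended $\lambda$-lemma of S{\l}odkowski, a strictly stronger statement. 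With those caveats your proposal is a faithful sketch of the standard proof of the quoted theorem.
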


As a consequence, when $c \neq 0$ changes, the initial circle $\mathcal{C}$ is deformed into a topological circle $\mathcal{C}_c$ which by the $\lambda-$lemma is actually a quasi-circle.
The bounded connected component of the complement of the quasi-circle is then a quasi-disk and this concludes the construction.

\section{Proof of Main result}
The goal of this section is to prove that $K^+_c \cap \Delta$ coincides with the quasi-disk $D_c=\mathcal{W}^s(W)\cap\Delta$ defined above, for $c$ small enough.

In order to do this we will need several preliminary results, that will allow us to compare the dynamics in the case $c=0$ to the dynamics when $c \neq 0$ is small enough.

One important tool that we will use in the proof is the theory of the {\it crossed mappings}, first developed by Hubbard and Oberste-Vorth in \cite{HOVII}, and that we will now review.

{\bf Crossed mappings.}

In the domain $W$, small perturbations of $H_0$ have expanding and contracting directions that allow the use of {\it graph transform} methods. A general framework for such methods is given by the theory of {\it crossed mappings} as described in \cite{HOVII}.

Here we recall the main results of \cite{HOVII} concerning such mappings.

 Let $B_1 = U_1 \times V_1$ and $B_2 = U_2 \times
V_2$ be bidisks.

\begin{defn}
A crossed mapping from $B_1$ to $B_2$ is a triple $(W_1, W_2, f)$, where
\begin{enumerate}
\item
$W_1 \subset U'_1 \times V_1$ where $U'_1 \subset U_1$ is a relatively compact open subset,
\item
$W_2 \subset U_2 \times V'_2$ where $V'_2 \subset V_2$ is a relatively compact open subset,
\item
$f: W_1 \to W_2$ is a holomorphic isomorphism, such that for all $y \in V_1$, the mapping
\[
pr_1 \circ f\vert_{W_1\cap (U_1 \times \{y\})}: W_1 \cap (U_1 \times \{y\}) \rightarrow U_2
\]
is proper, and the mapping
$$
pr_2 \circ f^{-1}\vert_{W_2\cap(\{x\}\times V_2)}: W_2 \cap (\{x\}\times V_2) \rightarrow V_1
$$
is proper.
\end{enumerate}
\end{defn}

{\bf Cone fields in the tangent space.}

 For any bidisk $B= U
\times V$, consider the horizontal cone field $C_{(x,y)} \subset T_{(x,y)}B$ defined by
\[
C_{(x,y)} = \left \{ {(\xi,\eta) \in T_{(x,y)}B} \textrm{ such that }\vert(x,\xi)\vert_U \geq \vert(y,\eta)\vert_V \right \},
\]

where  $T_{(x,y)}B$ denotes the tangent space at the point $(x,y)$.
One can also define a {\it vertical cone field} by reversing the inequality.

As in \cite{HOVII} we call smooth curves and surfaces in a bidisk $B = U \times V$ {\it  horizontal-like}
of {\it vertical-like} if their tangent spaces are in the horizontal or vertical cone respectively at
each of their points.

The next Proposition essentially says the following: if one has an infinite sequence of crossed-mappings, where the vertical direction is contracted and the horizontal is expanded, then the set of points inside one bidisk, say $B_0$, whose entire forward orbit always stays within the $B_i$ will be an almost vertical analytic graph that we should understand as a piece of stable manifold. A similar situation will occur for unstable manifolds.
The next Proposition is a formal description of this argument.

\begin{prop}
Let
$$
\dots B_{-1} = U_{-1} \times V_{-1}, B_0 = U_0 \times V_0, B_1 = U_1 \times V_1, \dots
$$
be a bi-infinite sequence of bidisks, and $f_i: B_i \rightarrow B_{i+1}$ be crossed mappings of degree 1,
with  $U'_i$ of uniformly bounded size in $U_i$, where the $U_i, U'_i, V_i, V'_i$ are as in the definition of crossed mappings. Then for all $m \in \mathbb{Z}$,
\newline
(1) the set

\begin{align*}
W^S_m = \{ (x_m,y_m) \mid & \textrm{ there exist }(x_n,y_n) \in B_n\\
&\textrm{for all }n \geq m \textrm{ such that }f_n(x_n,y_n) = (x_{n+1},y_{n+1})\}
\end{align*}

is a closed vertical-like Riemann surface in $B_m$, and $pr_2: W^S_m \rightarrow V_m$ is an isomorphism;
\newline
(2) the set

\begin{align*}
W^U_m = \{(x_m,y_m) \mid &\textrm{there exist $(x_n,y_n) \in B_n$}\cr
&\textrm{for all $n < m$ such that $f_n(x_n,y_n) = (x_{n+1},y_{n+1})$}\}
\end{align*}
is a closed horizontal-like Riemann surface in $B_m$, and $pr_1: W^S_m \rightarrow U_m$ is an isomorphism.
\newline
(3) Moreover, the sequence
\[
(x_m,y_m) := W^S_m \cap W^U_m\,,\quad m\in\mathbb{Z},
\]
is the unique bi-infinite sequence with $(x_m,y_m) \in B_m$ for all $m \in \mathbb{Z}$, and $f_m(x_m,y_m)
\allowbreak = (x_{m+1},y_{m+1})$.

\end{prop}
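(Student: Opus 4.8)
The plan is to reproduce the Hadamard graph transform argument of \cite{HOVII}, set up on a complete metric space of holomorphic graphs. For each $m$ let $\mathcal{D}^S_m$ be the set of holomorphic maps $\sigma\colon V_m\to U'_m$ whose graph $\{(\sigma(y),y):y\in V_m\}$ is vertical-like, and let $\mathcal{D}^U_m$ be the set of holomorphic $\tau\colon U_m\to V'_m$ whose graph is horizontal-like; we equip both with the sup distance $d(\sigma_1,\sigma_2)=\sup|\sigma_1-\sigma_2|$. Since the $U'_i$ (resp. $V'_i$) have uniformly bounded size inside the $U_i$ (resp. $V_i$), these spaces are non-empty and complete, and a bounded sequence in either of them is normal, so limits stay in the space.

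First I would build the \emph{stable graph transform} $\mathcal{G}_m\colon\mathcal{D}^S_{m+1}\to\mathcal{D}^S_m$ by $\mathcal{G}_m(\Gamma)=f_m^{-1}(\Gamma)\cap B_m$. The point is that, for a fixed $y'\in V_m$, the crossed-mapping axiom of degree one makes $pr_1\circ f_m$ restricted to the horizontal slice $W_1\cap(U_m\times\{y'\})$ a biholomorphism onto $U_{m+1}$; hence $f_m$ carries this slice to a horizontal-like graph $\{(u,\tau_{y'}(u)):u\in U_{m+1}\}$, and intersecting it with $\Gamma=\{(\sigma(y),y)\}$ reduces to solving $x=\sigma(\tau_{y'}(x))$ on $U_{m+1}$. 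Because $\sigma\circ\tau_{y'}$ sends $U_{m+1}$ into the relatively compact subset $U'_{m+1}$, it is a strict contraction for the Kobayashi metric (Schwarz lemma), so it has a unique fixed point; pulling that point back through the slice gives the unique point of $f_m^{-1}(\Gamma)$ lying over $y'$. Letting $y'$ vary, the implicit function theorem together with the cone inequalities shows this defines a holomorphic $\sigma'\colon V_m\to U'_m$ whose graph is again vertical-like, in fact with a slope bound improved by a factor $<1$ that is uniform in $m$ thanks to the uniform size hypothesis. This step --- verifying that the transform is well defined, lands strictly inside the cone field, and is a $\lambda$-contraction of the sup distance with $\lambda<1$ independent of $m$ --- is the technical heart and the step I expect to require the most care.

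Granting that, a composition $\mathcal{G}_m\circ\mathcal{G}_{m+1}\circ\cdots\circ\mathcal{G}_{m+k-1}$ applied to any graph in $\mathcal{D}^S_{m+k}$ is Cauchy as $k\to\infty$ (consecutive partial compositions differ by at most $\lambda^k$ times a fixed constant), so it converges uniformly to a limit $W^S_m\in\mathcal{D}^S_m$ independent of the starting graphs; identifying this with the set in the statement is bookkeeping, since a point belongs to that set exactly when for every $k$ it lies in the (vertical-like, horizontally $\lambda^k$-thin) region $B_m\cap f_m^{-1}(B_{m+1})\cap\cdots\cap(f_{m+k-1}\circ\cdots\circ f_m)^{-1}(B_{m+k})$, and these regions decrease to the graph $W^S_m$. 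Then $pr_2\colon W^S_m\to V_m$ is an isomorphism because $W^S_m$ is a holomorphic graph over $V_m$. The assertions about $W^U_m$ are obtained by the mirror-image argument: replace $f_m$ by $f_{m-1}$, use the second properness axiom instead of the first, push forward instead of pulling back, and exchange the roles of horizontal and vertical, giving $W^U_m\in\mathcal{D}^U_m$ with $pr_1\colon W^U_m\to U_m$ an isomorphism.

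For (3), write $W^S_m=\{(\sigma(y),y):y\in V_m\}$ and $W^U_m=\{(u,\tau(u)):u\in U_m\}$; a common point satisfies $x=\sigma(\tau(x))$, and $\sigma\circ\tau$ maps $U_m$ into the relatively compact subset $U'_m$, hence is a strict Kobayashi contraction with a unique fixed point, so $W^S_m\cap W^U_m$ is a single point $(x_m,y_m)$. From $W^S_m=B_m\cap f_m^{-1}(W^S_{m+1})$ one gets $f_m(W^S_m)\subset W^S_{m+1}$, and from $W^U_{m+1}=B_{m+1}\cap f_m(W^U_m)$ together with $f_m(x_m,y_m)\in B_{m+1}$ one gets $f_m(x_m,y_m)\in W^U_{m+1}$; hence $f_m(x_m,y_m)=(x_{m+1},y_{m+1})$ and the points $(x_m,y_m)$ form a bi-infinite orbit in the $B_n$. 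Conversely any such orbit lies, at time $m$, in $W^S_m$ (its forward part stays in the bidisks) and in $W^U_m$ (its backward part does), hence equals $(x_m,y_m)$; this is the claimed uniqueness.
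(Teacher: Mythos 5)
The paper does not actually prove this proposition: it is stated as a recalled result from the crossed-mapping theory of Hubbard and Oberste-Vorth, with the proof deferred to \cite{HOVII}. Your graph-transform argument (pulling back vertical-like graphs, Schwarz-lemma contraction from the relative compactness of $U'_i$ in $U_i$, nested thin regions converging to the limit graph, and the fixed-point argument for the unique bi-infinite orbit) is correct and is essentially the argument of that cited source, so your reconstruction matches the proof the paper relies on.
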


{\bf Application to the map $H_0$ and $H_c$ for $c$ small.}

\begin{lemm}
\label{HzeroCrossedMapping}
Let $z \in \mathbf{T}_0$. Then one can choose a bidisk $B_i, i \in \mathbb{Z}$ around each point $z_i, i \in \mathbb{Z}$ of the full orbit of $z$, such that $H_{0}: B_i \rightarrow B_{i+1}$ is a crossed mapping of degree one. Moreover for any small enough $c$, the map $H_c: B_i \rightarrow B_{i+1}$ is also a crossed mapping of degree one.
\end{lemm}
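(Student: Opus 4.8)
The plan is to establish the crossed-mapping structure first in a set of adapted linearizing coordinates where $H_0$ is diagonal, and then transfer this to bidisks in the original $(x,y)$-coordinates. Recall from the earlier lemma that around each point $\binom{x_0}{y_0}\in\mathbf{T}_0$ one has local coordinates $(u,v)=(\phi_1,\phi_2)=\bigl(\tfrac{x}{y^{\beta_1}},\tfrac{x}{y^{\beta_2}}\bigr)$ in which $H_0$ takes the diagonal form $\binom{u}{v}\mapsto\binom{u^{1-\beta_1}}{v^{1-\beta_2}}$, with $|1-\beta_1|=\beta_1>1$ (expanding in $u$) and $|1-\beta_2|=|\beta_2|=\beta_1-1<1$ (contracting in $v$). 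Since the orbit $z_i$ of a point $z\in\mathbf{T}_0$ stays on the compact torus, one can cover the orbit by finitely many such coordinate charts, and a standard compactness/uniformity argument produces, around each $z_i$, a product neighborhood which in the $u$-coordinate is a disk $U_i$ expanded across $U_{i+1}$ and in the $v$-coordinate a disk $V_i$ whose image is relatively compact in $V_{i+1}$. The first step, therefore, is to make these choices of bidisks $B_i=U_i\times V_i$ uniform along the orbit, using that the linearized map has a definite hyperbolicity gap independent of the base point on $\mathbf{T}_0$.

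Second, I would verify directly that with these $B_i$ the triple $(W_i^1,W_i^2,H_0)$ — where $W_i^1=B_i\cap H_0^{-1}(B_{i+1})$ and $W_i^2=H_0(B_i)\cap B_{i+1}$ — satisfies the three conditions in the definition of a crossed mapping. The properness of $pr_1\circ H_0$ restricted to horizontal slices and of $pr_2\circ H_0^{-1}$ restricted to vertical slices is immediate from the diagonal normal form: in the $u$-coordinate, $u\mapsto u^{1-\beta_1}$ is a proper (indeed finite) branched cover of the appropriate disks, and the degree is $1$ because we use a single analytic branch of $y\mapsto y^{\beta_i}$ on each chart, so $u\mapsto u^{1-\beta_1}$ is a biholomorphism onto its image. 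Translating back to $(x,y)$ via the biholomorphic change of coordinates $(x,y)\mapsto(u,v)$ (valid on each chart since we have fixed branches of the power functions near $\mathbf{T}_0$, away from $y=0$) preserves the crossed-mapping property and the degree, which gives the claim for $H_0$.

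Third, for the perturbation: all the defining conditions of a degree-one crossed mapping are open in the $C^1$ topology of the map — properness of the slice maps and the cone/transversality inequalities that underlie them are stable under small $C^1$ perturbations, and the degree is a locally constant integer. Since $H_c\to H_0$ in $C^1$ on the relevant compact region as $c\to0$, there is $\epsilon>0$ such that for $|c|<\epsilon$ each $H_c:B_i\to B_{i+1}$ is still a degree-one crossed mapping; because there are only finitely many distinct charts involved (the orbit being on a compact torus and the chart data depending continuously on the base point), a single $\epsilon$ works uniformly in $i$. The main obstacle, and the place that needs the most care, is precisely this uniformity in $i$ together with the fact that $z$ ranges over all of $\mathbf{T}_0$: one must check that the size of $U_i'$ inside $U_i$ (hypothesis of the Proposition on crossed mappings) and the hyperbolicity constants can be bounded below uniformly, so that the $\epsilon$ controlling the perturbation does not degenerate along the orbit or as the base point moves on the torus. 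This follows from compactness of $\mathbf{T}_0$ and continuity of the linearizing coordinates, but it is the technical heart of the lemma.
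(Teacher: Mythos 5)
Your proposal is correct and follows essentially the same route as the paper: local branches of $y^{\beta_1},y^{\beta_2}$ give coordinates in which $H_0$ is the diagonal map $\binom{u}{v}\mapsto\binom{u^{1-\beta_1}}{v^{1-\beta_2}}$, with one direction expanded and one contracted, hence a degree-one crossed mapping on suitable bidisks, and compactness of $\mathbf{T}_0$ gives the uniform margin that makes the property persist for $H_c$ with $|c|$ small. One small correction: your expansion/contraction labels are swapped, since $1-\beta_1=\beta_2$ has modulus $\beta_1-1<1$ (the $\phi_1$-direction is contracted) while $1-\beta_2=\beta_1>1$ (the $\phi_2$-direction is expanded); this is harmless because the two factors play symmetric roles up to relabeling which coordinate is ``horizontal''.
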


\begin{proof}
Locally around each $z_i$, we can find a local branches of the functions $y^{\beta_1}, y^{\beta_2}$, and therefore define functions $u,v$ such that in a bidisk $\vert u \vert \leq \alpha, \vert v \vert \leq \beta$ the map $H_0$ is given by
\[
\binom{u}{v} \mapsto \binom{u^{1-\beta_1}}{v^{1-\beta_2}},
\]
where one coordinate is expanded and the other is contracted, so the map is a crossed mapping that sends the "vertical boundary" $\vert u \vert = \alpha$ strictly outside of the bidisk.
By compactness of the torus $\mathbf{T}_0$ we can ensure that the images of all vertical boundaries of the $B_i$ are away from the initial bidisk by a fixed strictly positive distance. Therefore, by continuity, for $c$ small enough, the restriction of $H_c$ to the same family of bidisks will still be a crossed mapping of degree one.
\end{proof}

{\bf Main Result.}
We recall the main result of the section.
\begin{thm}
The intersection $K_c^+ \cap \Delta$ coincides with the closed quasidisk $\overline{\mathcal{D}_c}$, for $c$ small enough.
\end{thm}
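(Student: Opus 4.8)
The plan is to establish the two inclusions $\overline{\mathcal{D}_c}\subset K_c^+\cap\Delta$ and $K_c^+\cap\Delta\subset\overline{\mathcal{D}_c}$ separately, working in the good-position neighborhood $W=\mathcal{N}_{a,b}$ of Lemma~\ref{GoodPosition} and exploiting the partition of $W$ supplied by Proposition~\ref{DynamicsPartitionZero} (valid for $|c|$ small). First I would fix $\epsilon>0$ small enough that simultaneously: the bidisk $\mathbb{B}(0,1-\delta)$ lies in $K_c^+$ (Lemma~\ref{ep}), the domain $\{|x|>1+\delta,\ |y|>1+\delta\}$ lies in $U_c^+$ (Lemma~\ref{SubsetOfUPlus}), $W$ is in good position with respect to $\mathbb{B}(0,1-\delta)$ for all $|c|<\epsilon$, $\mathcal{W}^s(W)\cap\Delta=\mathcal{D}_c$ is a quasi-disk carried by a holomorphic motion of $\mathcal{C}$, and the crossed-mapping structure of Lemma~\ref{HzeroCrossedMapping} holds along every torus orbit.

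For the inclusion $\overline{\mathcal{D}_c}\subset K_c^+\cap\Delta$: every point of $\mathcal{D}_c$ lies on a local stable manifold of the persistent torus $\mathbf{T}_c$, so its forward orbit converges to $\mathbf{T}_c$ and in particular stays bounded; hence $\mathcal{D}_c\subset K_c^+$, and taking closures (using that $K_c^+$ is closed) gives $\overline{\mathcal{D}_c}\subset K_c^+$. Since $\mathcal{D}_c\subset\Delta$ by construction and $\Delta$ is closed, $\overline{\mathcal{D}_c}\subset\Delta$ as well. Alternatively one can argue via Liouville as hinted in the introduction: the interior of the quasi-disk in $\Delta\cong\mathbb{C}$ is bounded, so the Green's function $g_c$ (Theorem~\ref{green}) is a bounded subharmonic function there, vanishing on the boundary, hence identically zero, forcing the interior into $K_c^+$; I expect the stable-manifold argument to be cleaner, so that is what I would write.

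For the reverse inclusion $K_c^+\cap\Delta\subset\overline{\mathcal{D}_c}$, this is the heart of the proof and the main obstacle. Take $z\in K_c^+\cap\Delta$; I want to show $(z,z)$ lies on the stable set $\mathcal{W}^s(W)$, i.e.\ on some local stable manifold of $\mathbf{T}_c$, whose intersection with $\Delta$ is exactly $\overline{\mathcal{D}_c}$. The key point is that a bounded forward orbit cannot escape to infinity and cannot fall into the big bidisk $\mathbb{B}(0,1-\delta)$ while tracking the diagonal near $\mathcal{C}$ — one must first argue, using the partition of $H_c(W)$ into three pieces ($\overline{\mathcal{M}_1}\subset\mathbb{B}(0,1-\delta)$, $\mathcal{M}_2=W\cap H_c(W)$, $\overline{\mathcal{M}_3}\subset U_c^+$) together with the structure of $K_c^+$ on $\Delta$, that a diagonal point with bounded orbit which starts in $W$ must have its entire forward orbit in $W$, hence lies in $\mathcal{W}^s(W)$; and that diagonal points of $K_c^+$ outside $W$ are handled by noting that for $c$ small the only diagonal points of $K_c^+$ near the unit circle are those in $W$ (a perturbation of Remark~\ref{K0}, where $K_0^+\cap\Delta$ is exactly the closed unit disk). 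Then, having placed $(z,z)$ in $\mathcal{W}^s(W)$, invoke Proposition~\ref{DynamicsPartitionZero} (true for small $c$) to identify $\mathcal{W}^s(W)$ with the union of local stable leaves; the crossed-mapping Proposition guarantees that the bi-infinite tracking orbit through each torus point is unique and that $W_m^S$ is the genuine local stable manifold, so $(z,z)\in W^s(p_c)$ for some $p_c\in\mathbf{T}_c$, and intersecting with $\Delta$ lands in $\mathcal{D}_c$ (or its closure). The delicate step I expect to fight with is ruling out that a bounded diagonal orbit lingers in $W$ for a while and then slips into the component of $H_c(W)\setminus W$ inside $\mathbb{B}(0,1-\delta)$: this requires that the "first part" $R_a\cap\{S<1-b\}$ of the partition does not meet $\Delta$ in $K_c^+$ in a way that disconnects $\mathcal{D}_c$, which one controls by the transversality of the stable foliation to $\Delta$ and continuity in $c$. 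Finally, combining the two inclusions and the already-established fact that $\mathcal{D}_c$ is a quasi-disk with $c\mapsto\mathcal{C}_c$ a holomorphic motion yields both $K_c^+\cap\Delta=\overline{\mathcal{D}_c}$ and, via the $\lambda$-lemma, the asserted Hausdorff continuity of $c\mapsto\K_c$.
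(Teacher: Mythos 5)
The central problem in your write-up is the identification $\mathcal{W}^s(W)\cap\Delta=\overline{\mathcal{D}_c}$. That is not what the construction gives: the stable set of the torus meets the diagonal only along the quasi-circle $\mathcal{C}_c=\partial\mathcal{D}_c$ (this is exactly the paper's lemma that $W^s(T_c)\cap\Delta$ equals the \emph{boundary} of the quasi-disk). Interior points of $\mathcal{D}_c$ --- for instance points near the origin, whose orbits stay in $\mathbb{B}(0,1-\delta)\subset K_c^+$ by Lemma~\ref{ep} --- are bounded but do not converge to $\mathbf{T}_c$. This breaks both halves as you propose to write them. For the easy inclusion, your preferred argument (``every point of $\mathcal{D}_c$ lies on a local stable manifold of $\mathbf{T}_c$'') only covers the boundary; the interior requires precisely your ``alternative'': the boundary lies in $K_c^+$, and then the maximum principle (equivalently, $g_c\geq 0$ subharmonic and vanishing on $\partial\mathcal{D}_c$ forces $g_c\equiv 0$ inside) puts the interior in $K_c^+$. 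That is the paper's argument, so you should promote the fallback to the main argument. For the hard inclusion, the stated goal ``show $(z,z)\in\mathcal{W}^s(W)$ for every $z\in K_c^+\cap\Delta$'' is false, and the ``delicate step'' you flag --- ruling out that a bounded diagonal orbit slips into $\mathbb{B}(0,1-\delta)$ --- is not an obstruction at all: such orbits are exactly the interior points of the quasi-disk, and they are supposed to be in $K_c^+$.

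What actually has to be shown is that the part of $\Delta$ \emph{exterior} to the quasi-circle lies in $U_c^+$, and your proposal does not contain the ingredients the paper uses for this. The paper first proves $\mathcal{W}^s_c(W)=W^s(T_c)$ via the Shadowing Lemma combined with the crossed-mapping structure of Lemma~\ref{HzeroCrossedMapping} and the identity $T_c=\bigcap_{n\in\mathbb{Z}}H_c^{(n)}(W)$; then that $W^s(T_c)\cap\Delta$ is the quasi-circle (persistence of transverse intersections); and finally it runs a connectedness argument: by Proposition~\ref{DynamicsPartitionZero} (valid for small $c$), $W$ splits as $(W\cap \mathrm{int}(K_c^+))\amalg \mathcal{W}^s(W)\amalg (W\cap U_c^+)$, so the annulus $(W\cap\Delta)\setminus\overline{\mathcal{D}_c}$, which avoids the stable set, is a disjoint union of two open pieces, hence by connectedness lies entirely in $\mathrm{int}(K_c^+)$ or entirely in $U_c^+$; since it communicates along $\Delta$ with the far region where Lemma~\ref{SubsetOfUPlus} gives escape, it lies in $U_c^+$. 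Your substitute for this --- handling diagonal points outside $W$ by ``a perturbation of Remark~\ref{K0}'' and controlling the rest by transversality --- is too vague and essentially assumes the description of $K_c^+$ near the unit circle for $c\neq 0$, which is the content of the theorem. So as it stands the proposal does not close the inclusion $K_c^+\cap\Delta\subset\overline{\mathcal{D}_c}$.
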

{\bf Proof of the Theorem.}

{\bf Easy inclusion: $\mathcal{D}_c \subset (K^+_c \cap \Delta_c)$.}
This is simply a matter of observing that the boundary of $\mathcal{D}_c$ is included in $K^+_c$ since it is contained in the set $\mathcal{W}^s_c(W)$ of points with forward orbit entirely contained in $W$.
Then, an immediate application of the maximum principle yields that the points of the interior of the quasi-disk have also bounded forward orbits.

{\bf Second inclusion: $K^+_c \cap \Delta \subset \mathcal{D}_c.$}
We will prove now by contradiction that the exterior of the quasi-disk does not intersect $K^{+}_c$, and this alone will require several lemmas.

Recall that for all $c$, the stable manifold $W^s(T_c)$ is the set of all points in $\C^2$ whose forward orbit converges towards $T_c$.
\begin{lemm}
We have $W^{s}_c(W)=W^{s}(T_c)$.
\end{lemm}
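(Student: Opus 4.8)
The goal is to prove the equality of two sets: $W^s_c(W)$, the set of points of $W$ whose entire forward orbit stays in $W$, and $W^s(T_c)$, the set of points in $\mathbb{C}^2$ whose forward orbit converges to the persistent invariant torus $T_c$. I would prove the two inclusions separately, the inclusion $W^s(T_c)\cap W\subset W^s_c(W)$ being more or less immediate and the reverse inclusion being the substantial one.

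For the inclusion $W^s(T_c)\cap W\subset W^s_c(W)$: if a point $p\in W$ has forward orbit converging to $T_c$, then since $T_c$ persists inside $W$ (it lies in the interior, being a small perturbation of $\mathbf{T}_0\subset\mathrm{int}(\mathcal{N}_{a,b})$), the tail of the orbit of $p$ is contained in any prescribed neighborhood of $T_c$, in particular in $W$; and one must also rule out that the finite initial segment of the orbit leaves $W$. Here I would invoke the "good position" structure from Lemma~\ref{GoodPosition} and Proposition~\ref{DynamicsPartitionZero} (valid also for small $c$): a point of $W$ whose orbit ever leaves $W$ must fall either into $\mathbb{B}(0,1-\delta)\subset \mathrm{int}(K^+_c)$ — where, being attracted to the interior of a bidisk stable under $H_c$, its orbit cannot come back to approach $T_c$ from far — or into the region $\{|x|>1+\delta,\ |y|>1+\delta\}\subset U^+_c$, whose orbit escapes to infinity and a fortiori does not converge to $T_c$. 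Either way the orbit does not converge to $T_c$, so $p\in W^s_c(W)$.

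For the reverse inclusion $W^s_c(W)\subset W^s(T_c)$, which I expect to be the main obstacle: I would use the hyperbolicity of $T_c$ together with the crossed-mapping description. By the earlier lemmas, $\mathcal{W}^s_c(W)$ is a disjoint union of analytic disks obtained as graph limits of crossed mappings $H_c:B_i\to B_{i+1}$ along orbits of points of $\mathbf{T}_0$ (Lemma~\ref{HzeroCrossedMapping}), each such disk being the local stable manifold of a point of $T_c$ by Proposition on crossed mappings (part (1)), identifying $W^S_m$ with a piece of stable manifold. Concretely, if $p\in W^s_c(W)$, then by Proposition~\ref{DynamicsPartitionZero} applied for small $c$ the point $p$ lies in $\mathcal{W}^S(W)$, and the crossed-mapping analysis produces a point $z\in\mathbf{T}_0$ with orbit $(z_i)$ such that $p$ lies on the vertical-like surface $W^S_0$ built from the bidisks around the $z_i$; this surface is exactly the local stable manifold $\mathcal{V}^-_{q}$ of the corresponding point $q\in T_c$ of the perturbed torus. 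By the standard stable manifold theorem for the hyperbolic set $T_c$, points on $\mathcal{V}^-_q$ have forward orbit converging to $q\in T_c$ (at a uniform exponential rate governed by the contraction $|1-\beta_1|^{-1}<1$ in the $v$-coordinate), hence $p\in W^s(T_c)$.

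The delicate point is making rigorous the identification "the surface $W^S_m$ produced by the abstract crossed-mapping proposition coincides with the analytic local stable manifold $\mathcal{V}^-_q$ of the hyperbolic set $T_c$." For $c=0$ this is transparent from the explicit coordinates $u=x/y^{\beta_1}$, $v=x/y^{\beta_2}$ in which $H_0$ is $(u,v)\mapsto(u^{1-\beta_1},v^{1-\beta_2})$ and $\{S=1\}=\{|v|=1\}$ is manifestly the stable set; for small $c\neq 0$ it follows from the uniqueness clause in the crossed-mapping proposition (part (3)) combined with the fact, established in the previous section, that the perturbed local stable manifolds are graphs of analytic functions of $(y,c)$ that persist and stay trapped in the $B_i$, so both descriptions give the same family of graphs. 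I would close the argument by noting that a point with forward orbit in $W$ has, after one step, its orbit captured by this nested family of crossed mappings, hence lies on one of these graphs, hence on some $W^s(q)$, $q\in T_c$, which is what we wanted.
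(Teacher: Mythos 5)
Your easy inclusion $W^s(T_c)\cap W\subset W^s_c(W)$ is fine and in the spirit of Lemma~\ref{GoodPosition}, but the hard inclusion contains a genuine gap: its decisive step is asserted rather than proved. You write that ``the crossed-mapping analysis produces a point $z\in\mathbf{T}_0$ with orbit $(z_i)$ such that $p$ lies on the vertical-like surface $W^S_0$ built from the bidisks around the $z_i$'', and later that a point with forward orbit in $W$ has its orbit ``captured by this nested family of crossed mappings''. Nothing in what precedes justifies this capture: the bidisks of Lemma~\ref{HzeroCrossedMapping} are attached to one specific $H_0$-orbit on the torus, whereas your hypothesis on $p$ only says that its $H_c$-orbit stays somewhere in $W$, a neighborhood covered by many such bidisks; to invoke the crossed-mapping proposition you must exhibit a single coherent chain $(B_i)_{i\ge 0}$, with $H_c\colon B_i\to B_{i+1}$ crossed of degree one, such that $H_c^{(i)}(p)\in B_i$ for all $i$. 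Likewise your appeal to ``the earlier lemmas'' for the claim that $\mathcal{W}^s_c(W)$ is the union of the persisted analytic stable disks is circular: Berger's persistence yields an $H_c$-invariant lamination close to the one for $c=0$, but it does not exclude extra points of $W$ whose forward orbit is trapped in $W$ yet lie on no persisted leaf --- ruling those out is exactly the content of the lemma you are proving.

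The missing ingredient, which is the heart of the paper's argument, is shadowing. Since $H_c$ and $H_0$ differ by $|c|$ on $W$, the forward $H_c$-orbit of $p\in W^s_c(W)$ is an $\varepsilon$-pseudo-orbit for $H_0$ with $\varepsilon$ controlled by $|c|$; the Shadowing Lemma for the hyperbolic set $\mathbf{T}_0$ (Theorem 18.1.3 in Katok--Hasselblatt) gives a true $H_0$-orbit $\varepsilon$-close to it, which therefore stays near $W$ forever, hence lies in $W^s_0(W)=W^s(\mathbf{T}_0)$ and tracks the orbit of a point $\tilde p_0\in\mathbf{T}_0$. That is what supplies the chain of bidisks around the orbit of $\tilde p_0$ containing the $H_c$-orbit of $p$, so that the crossed-mapping proposition applies; the resulting vertical-like disk through $p$ also contains a point $q$ of $\bigcap_{n\in\mathbb{Z}}H_c^{(n)}(W)$, which equals $T_c$ by the stability of hyperbolic invariant sets (Robinson, Theorem 7.4), and the vertical contraction then gives $p\in W^s(q)\subset W^s(T_c)$. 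Without this shadowing step (or an equivalent quantitative use of expansivity and structural stability), your identification of $W^S_0$ with a local stable manifold of a point of $T_c$ has nothing to be applied to, and the proof does not close.
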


{\bf Proof:}
Let $p \in W^{s}_c(W)$. Take the forward orbit $\mathcal{O}:=(p, H_c(p), H_c^{(2)}(p), \ldots)$. Then we observe that $\mathcal{O}$ is an $\varepsilon$-pseudo orbit for $H_0$,
where $\varepsilon$ depends only on $|c|$.
By the Shadowing Lemma (see for example Theorem 18.1.3 in Katok-Hasselblatt \cite{KH}), there exists a true $H_0-$orbit of a point $p_0$ which is $\varepsilon$-close to $\mathcal{O}$. Moreover $p_0 \in W^s_0(W)=W^s(T_0)$. Hence $p_0 \in W^s(\tilde{p_0})$ for some point $\tilde{p_0} \in T_0$.

Now by Lemma \ref{HzeroCrossedMapping} we can take a bi-infinite sequence $B_i, i \in \mathbb{Z}$ of bidisks such that both $H_0 :B_i \rightarrow B_{i+1}$ are crossed mappings of degree 1. Then we know that $p$ has a forward orbit that stays in all the $B_i$ for $i \geq 0$, hence it belongs to $W^s(B_0)$ (which is the set of points of $B_0$ that has a forward orbit staying in all the $B_i$ for $i \geq 0$). But $W^s(B_0)$ also contains the point $q \in W^s(B_0) \cap W^u(B_0)$ which by definition is a point of $\cap_{n \in \mathbb{Z}} H^{n}_c(W)$.
Now we use a lemma which is a direct consequence of Theorem 7.4 in \cite{Robinson} "Stability of a hyperbolic invariant set":
\begin{lemm}
$T_c= \bigcap_{n \in \mathbb{Z}} H^{(n)}_c(W)$
\end{lemm}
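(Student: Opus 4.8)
The plan is to deduce the lemma from two ingredients: an exact description of the maximal invariant set of $H_0$ in $W$, obtained directly from the functional equations for $R$ and $S$; and then Theorem~7.4 of \cite{Robinson} (stability of an isolated hyperbolic set), whose hypotheses are now available, combined with the uniqueness of the hyperbolic continuation of the torus already quoted from \cite{KH}.

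\emph{Step 1: the case $c=0$.} First I would show $\bigcap_{n\in\mathbb{Z}}H_0^{(n)}(W)=T_0$. Note that $W=\mathcal{N}_{a,b}=\{1-a\le R\le 1+a\}\cap\{1-b\le S\le 1+b\}$ is contained in $\{x\neq 0,\ y\neq 0\}$ (from $R/S=|y|^{\beta_2-\beta_1}$ one gets $|y|$, hence $|x|$, bounded away from $0$ and $\infty$), so $H_0$ is a biholomorphism onto its image on a neighbourhood of $T_0$, and a point $p$ lies in $\bigcap_{n\in\mathbb{Z}}H_0^{(n)}(W)$ if and only if its whole bi-infinite orbit stays in $W$. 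By Lemma~\ref{SR}, $R\circ H_0=R^{1-\beta_1}$ and $S\circ H_0=S^{1-\beta_2}$; since $1-\beta_2=\beta_1>1$, iteration gives $S(H_0^{(n)}(p))=S(p)^{\beta_1^{\,n}}$, so if $S(p)\neq 1$ then $|\log S(H_0^{(n)}(p))|\to\infty$ and the forward orbit leaves $W$, forcing $S(p)=1$. Applying the same reasoning to $H_0^{-1}$ and $R$ (here $1/(1-\beta_1)=1/\beta_2$ has modulus $>1$), the backward orbit forces $R(p)=1$. But $R(p)=S(p)=1$ gives $|y|^{\beta_1-\beta_2}=1$, hence $|y|=1$ and $|x|=|y|^{\beta_1}=1$, i.e.\ $p\in T_0$; conversely $T_0$ is $H_0$-invariant and contained in $W$. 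Thus $W$ is an isolating neighbourhood of the hyperbolic set $T_0$.

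\emph{Step 2: transfer to small $c$.} Since $H_c-H_0\equiv(c,0)$ is constant, $\|H_c-H_0\|_{C^1}=|c|$ on $W$, so $H_c\to H_0$ in the $C^1$ topology as $c\to 0$. By Step~1 the hypotheses of Theorem~7.4 of \cite{Robinson} hold, so there is $\epsilon>0$ such that for $|c|<\epsilon$ the set $\Lambda_c:=\bigcap_{n\in\mathbb{Z}}H_c^{(n)}(W)$ is a hyperbolic invariant set, $C^0$-close to $T_0$, and $H_c|_{\Lambda_c}$ is topologically conjugate to $H_0|_{T_0}$ via a homeomorphism tending to the identity as $c\to 0$. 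On the other hand, the torus $T_c$ furnished by structural stability is $H_c$-invariant and $C^0$-close to $T_0$, hence $T_c\subset W$ and therefore $T_c\subset\Lambda_c$; and $T_c$ is itself a hyperbolic continuation of $T_0$ for $H_c$. By the uniqueness clause in the structural stability theorem quoted above, such a continuation is unique, whence $\Lambda_c=T_c$, which is the claim.

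\emph{Main obstacle.} The only substantive point is Step~1: one must check that $T_0$ is \emph{equal} to, not merely contained in, the maximal invariant set of $H_0$ in $W$ — it is exactly this that makes $W$ an isolating neighbourhood and lets Theorem~7.4 apply. Once that is established, Step~2 is a routine combination of $C^1$-persistence of isolated hyperbolic sets with the uniqueness of the continuation; the one technical care there is to keep $W$ inside the region where $H_c$ is a biholomorphism (away from $\{x=0\}$), so that $\bigcap_{n\in\mathbb{Z}}H_c^{(n)}(W)$ genuinely coincides with the set of points whose bi-infinite orbit remains in $W$.
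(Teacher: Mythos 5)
Your proposal is correct and follows essentially the same route as the paper, which simply invokes Theorem 7.4 of \cite{Robinson}: your Step 1 supplies the isolating-neighborhood property $T_0=\bigcap_{n\in\mathbb{Z}}H_0^{(n)}(W)$ that this citation tacitly requires, and your Step 2 is exactly the persistence argument. The only loose point is the appeal to the ``uniqueness clause'' (which in the quoted structural stability theorem concerns the conjugacy $h$, not the invariant set); it is harmless, since $T_c\subset\Lambda_c:=\bigcap_{n\in\mathbb{Z}}H_c^{(n)}(W)$ together with the fact that both sets are homeomorphic to the $2$-torus forces $T_c=\Lambda_c$ (a compact subset of a $2$-torus homeomorphic to a $2$-torus is open by invariance of domain, hence equals it by connectedness).
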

Thus we deduce that $q \in T_c$ and that finally $q \in W^s(q)$.

\begin{lemm}
$W^s(T_c) \cap \Delta$ is equal to the boundary of the quasi-disk.
\end{lemm}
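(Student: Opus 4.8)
The plan is to split $W^s(T_c)\cap\Delta$ into the two inclusions $\mathcal{C}_c\subset W^s(T_c)\cap\Delta$ and $W^s(T_c)\cap\Delta\subset\mathcal{C}_c$, where I write $\mathcal{C}_c=\partial\mathcal{D}_c$ for the quasi-circle produced above as the holomorphic motion of $\mathcal{C}=\mathbf{T}_0\cap\Delta$; recall that by the construction of $\mathcal{D}_c$ one has $W^s_c(W)\cap\Delta=\mathcal{C}_c$.

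The first inclusion I would get straight from the construction: each point of $\mathcal{C}_c$ is the transverse intersection with $\Delta$ of the local stable leaf through some point of the persisting torus $\mathbf{T}_c$, and such a leaf lies in $W^s_c(W)$, hence in $W^s(T_c)$ by the lemma $W^s_c(W)=W^s(T_c)$ just established.

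For the reverse inclusion, the point I expect to be the main obstacle is that $W^s(T_c)$ is a non-compact $3$-manifold, so it could a priori meet $\Delta$ far away from $W$; one first has to confine $W^s(T_c)\cap\Delta$ to a thin annulus lying inside $W$. I would do this by an elementary case analysis on $(z,z)\in\Delta$. If $|z|\ge 1+\delta$ (with $\delta$ the good-position radius), then $|z^2+c|\ge|z|^2-|c|>1+\delta$ for $|c|$ small, so $H_c^2(z,z)\in\{|x|>1+\delta,\ |y|>1+\delta\}\subset U^+_c$ by Lemma~\ref{SubsetOfUPlus}, whence $(z,z)\notin K^+_c\supset W^s(T_c)$. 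If $|z|\le 1-\delta$, then $(z,z)\in\mathbb{B}(0,1-\delta)$, which is forward invariant for $|c|$ small (the estimate behind Lemma~\ref{ep}) and stays at positive distance from $\mathbf{T}_c$; so the forward orbit of $(z,z)$ never leaves $\overline{\mathbb{B}(0,1-\delta)}$ and cannot accumulate on $\mathbf{T}_c$. Thus $W^s(T_c)\cap\Delta\subset\{(z,z):1-\delta<|z|<1+\delta\}$, and since $R(z,z)=|z|^{-\beta_2}$ and $S(z,z)=|z|^{\beta_1}$ are both close to $1$ when $|z|$ is, this annulus is contained in $W=\mathcal{N}_{a,b}$ once $\delta$ is small relative to $a,b$ — which I may arrange when fixing the good-position data.

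It then remains to take $p\in W^s(T_c)\cap\Delta$, which by the previous step lies in $W$, and check that its whole forward orbit stays in $W$: were some iterate to leave $W$ it would land in $\overline{\mathcal{M}_1}\subset\mathbb{B}(0,1-\delta)$ — impossible, since the orbit would then be trapped in $\mathbb{B}(0,1-\delta)$ away from $\mathbf{T}_c$ — or in $\overline{\mathcal{M}_3}\subset U^+_c$ — impossible, since $p\in K^+_c$; this is precisely the good-position trichotomy of Lemma~\ref{GoodPosition} and Proposition~\ref{DynamicsPartitionZero}, still valid for $|c|$ small. Hence $p\in W^s_c(W)$ and therefore $p\in W^s_c(W)\cap\Delta=\mathcal{C}_c=\partial\mathcal{D}_c$, which closes the argument. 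Beyond the construction, the lemma $W^s_c(W)=W^s(T_c)$ and the good-position dichotomy, the only genuinely delicate issue is the uniformity over $|c|<\epsilon$ of these estimates together with the compatibility of $\delta$ with $a,b$.
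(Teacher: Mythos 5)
Your reduction has real merit, and in one respect it is more careful than the paper: the confinement step, showing via Lemmas \ref{ep} and \ref{SubsetOfUPlus} that $W^s(T_c)\cap\Delta$ must lie in the thin annulus $\{1-\delta<|z|<1+\delta\}$ of the diagonal, hence in $W$ once $\delta$ is chosen small relative to $a,b$, and then using the good-position trichotomy (Lemma \ref{GoodPosition}, Proposition \ref{DynamicsPartitionZero}) to see that the forward orbit of such a point can never leave $W$, is exactly the global-to-local argument the paper leaves implicit, and your estimates are correct (up to the harmless typo $R(z,z)=|z|^{\beta_2}$, not $|z|^{-\beta_2}$).

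However, the final step is circular. You pass from $p\in\mathcal{W}^s_c(W)\cap\Delta$ to $p\in\mathcal{C}_c$ by ``recalling'' the equality $\mathcal{W}^s_c(W)\cap\Delta=\mathcal{C}_c$ from the construction of $\mathcal{D}_c$. But that construction (the holomorphic motion of $\mathcal{C}$) only yields the inclusion $\mathcal{C}_c\subset\mathcal{W}^s_c(W)\cap\Delta$: through each $p_0\in\mathcal{C}$ the continued leaf meets $\Delta$ in a point $\phi(c,p_0)$, and these points are distinct; nothing there shows that every point of $\mathcal{W}^s_c(W)\cap\Delta$ arises in this way. Since the preceding lemma identifies $\mathcal{W}^s_c(W)$ with $W^s(T_c)$, the equality you invoke is literally the statement to be proved. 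The missing ingredient is the paper's transversality/continuation argument: any $m\in\mathcal{W}^s_c(W)\cap\Delta$ lies on one of the local stable leaves, which are graphs depending analytically on $(y,c)$ (Berger's persistence of laminations); at $c=0$ these leaves either miss $\Delta$ or cut it transversally in a point of $\mathcal{C}_0=\Delta\cap W^s(T_0)$, and transversality together with uniformity over the compact family of leaves forces the intersection of the perturbed leaf with $\Delta$, for $|c|$ small, to be precisely the continuation $\phi(c,m_0)$ of a point $m_0\in\mathcal{C}_0$, i.e.\ a point of $\mathcal{C}_c$. Your argument becomes complete once this continuation step is supplied; as written, it assumes the surjectivity of the holomorphic motion onto $\mathcal{W}^s_c(W)\cap\Delta$, which is the substance of the lemma.
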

{\bf Proof.}
In $W$, the stable manifolds belonging to $W^s(T_0)$ are transverse to the diagonal $\Delta$ and are pieces of graphs of functions analytic in $c$. Small perturbations of such graphs stay transverse to $\Delta$, so any point $m \in \Delta \cap W^s(T_c)$ is the continuation of a point $m_0 \in \Delta \cap W^s(T_0)=\mathcal{C}_0$ where $\mathcal{C}_0$ is the boundary of the disc that is $\Delta \cap K^{+}_0$. Thus $\Delta \cap W^s(W)$ coincides with the boundary of the quasi-disk (which had been previously constructed as the holomorphic motion of $\mathcal{C}_0$).

{\bf End of Proof.}
Now it remains to show that the exterior of the quasi-disk does not contain points of $int(K_c^+)$. Let us consider the annulus $\mathcal{A}$ defined as the complement in $W$ of the quasi-disk. Since we know that $W$ is the disjoint union
$$(W \cap int(K_c^+)) \amalg (W^s(W)) \amalg (W \cap U_c^+),
$$
the open annulus $\mathcal{A}$ is the disjoint union  of two open sets $\mathcal{A} \cap int(K_c^+)$ and $\mathcal{A} \cap U_c^+$. By connectedness, $\mathcal{A}$ can only consist of points in $int(K_c^+)$ alone, or $U^+$ alone. Since very far from $W$ in $\Delta$ we have only points in $U_c^+$ we deduce at once that $\mathcal{A}$ contains no point in $int(K_c^+)$, which concludes the proof.

In summary we showed that $\Delta \cap K_c^{+}$ is exactly a quasi-disk, for $c$ small enough.

\qed

\begin{rem}
With minor changes in the proof above, one could also show that for $\vert c \vert$ small enough, the intersection $\K_c \cap \mathcal{P}$ is a quasi-disk, where $\mathcal{P}$ is the parabola $\{(z^2,z); z \in \C \}$.

\end{rem}

{\bf Generalization.}

All results can be  extended  easily to the sets associated of the dynamics of $h(x, y)= (x^a y^b+ c, x)$ where $a, b$ are positive real numbers.

\newpage
 \section*{Acknowledgement}

The first author would like to express thanks to the colleagues of the departamento de Matem\'atica IBILCE-UNESP and departamento de matem\'atica of COMPINAS for the warm hospitality during his visit.
He was supported by Cooperation Brasil-France, and also by Capes-COFECUB 661/10.

The second author would like to thank the IME-USP (S\~ao Paulo)  for supporting his research through the project Fapesp (Num. 2011/12 650-4).

O. Sester was supported by Capes-COFECUB 661/10 and also Fapesp Project 2007/06896-5 and he would like to express his sincere thanks to the Mathematics department of the IBILCE-UNESP for the hospitality during his visit.

A. Messaoudi  would like to express thanks to Pascal Hubert and John Hubbard for fruitful discussions .
He was supported by Capes-COFECUB 661/10,  by Brazilian CNPq grant
305939/2009-2 and also by
 Fapesp project:  2011/23199-1 .

\newpage

\bibliographystyle{apalike}
\setlength{\parsep}{0cm} \small
 %\bibliography{xbib}

\end{document}